\documentclass[12pt]{article}%
\usepackage{amsfonts, amsmath, amssymb}
\usepackage{subfig}
\usepackage{pstricks, pst-plot, pst-node, multido}
\usepackage{amsmath}
\usepackage{amsfonts}
\usepackage{amssymb}
\usepackage{graphicx}%
\setcounter{MaxMatrixCols}{30}
\providecommand{\U}[1]{\protect\rule{.1in}{.1in}}
\newtheorem{theorem}{Theorem}
\newtheorem{acknowledgement}[theorem]{Acknowledgement}

\newtheorem{corollary}[theorem]{Corollary}

\newtheorem{definition}[theorem]{Definition}

\newtheorem{lemma}[theorem]{Lemma}

\newtheorem{proposition}[theorem]{Proposition}
\newtheorem{remark}[theorem]{Remark}

\newenvironment{proof}[1][Proof]{\noindent\textbf{#1.} }{\ \rule{0.5em}{0.5em}}
\begin{document}

\title{Global bifurcation of planar and spatial periodic solutions from the polygonal
relative equilibria for the $n$-body problem\thanks{This is a corrected
version of the printed article. Mistakes on remarks have been corrected and
they have been moved to Section 8.4.}}
\author{Carlos Garc\'{\i}a-Azpeitia\thanks{Depto. Matem\'{a}ticas y Mec\'{a}nica,
IIMAS-UNAM, FENOMEC, Apdo. Postal 20-726, 01000 M\'{e}xico D.F. }, Jorge
Ize\footnotemark[2]}
\maketitle

\begin{abstract}
Given $n$ point masses turning in a plane at a constant speed, this paper
deals with the global bifurcation of periodic solutions for the masses, in
that plane and in space. As a special case, one has a complete study of $n$
identical masses on a regular polygon and a central mass. The symmetries of
the problem are used in order to find the irreducible representations, the
linearization, and with the help of the orthogonal degree theory, all the
symmetries of the bifurcating branches.

\end{abstract}

\section{Introduction}

Consider $n$ point masses turning at a constant speed in a plane around some
fixed point. A relative equilibrium of this configuration is a stationary
solution of the equations in the rotating coordinates. In this paper we
analyze the bifurcation of periodic solutions from a general relative
equilibrium. As a special case, we give a complete study of the polygonal
relative equilibrium where there are $n$ identical masses arranged on a
regular polygon and a central mass. This model was posed by Maxwell in order
to explain the stability of Saturn's rings. For this we give a full analysis
for the bifurcation of planar and spatial periodic solutions. We will prove
that, according to the value of the central mass, there are up to $2n$
branches of planar periodic solutions, with different symmetries, and up to
$n$ additional branches which are, if some non resonance condition is met, in
the space $\mathbb{R}^{3}$ with non trivial vertical components.

More precisely, in rotating coordinates and after a scaling in time, we look
for $2\pi$-periodic solutions $x_{j}(t)$ for $j=0,1,...,n$, where $j=0$
corresponds to the central body of mass $\mu$ while each element of the ring
has mass $1$, and frequency $\nu$. We shall prove (see the exact theorems and
graphs in the core of the paper) that, for each integer $k=1,...,n$, there are
one or two intervals (one unbounded) of values of the central mass with global
branches of periodic solutions bifurcating from the relative equilibrium. In fact:

For $k=2,...,n-2$ and for $k=1,n-1$ and $n>6$, there is an unbounded interval
of values of $\mu$ with two branches of planar periodic solutions (short and
long period) with the symmetries described below. For $k=1,n-1$ and
$n=3,4,5,6$ there is only one bifurcating branch in the unbounded interval,
while for $n=2$ there is one branch for any value of $\mu$.

For $k=2,...,n-2$, there is a bounded interval where one has one bifurcating
global branch for $\mu<\mu_{k}$, where $\mu_{k}$ is the value of the central
mass where one has the global branch of relative equilibria obtained in
\cite{GaIz11}. For $k=2,...,n/2$, there are two bifurcating branches for
$\mu_{k}<\mu<m_{0}$.

For $k=n$, and any $n>1$, there is a global branch of planar solutions
starting at $\nu=1$.

Finally, for $k=1,...,n$ and any $\mu$, there is a global branch of solutions
with a non trivial vertical coordinate, which is odd in time while the
horizontal coordinates are $\pi$-periodic, with the possible exception of a
finite number of resonances at a finite number of values of $\mu$'s.

By global branch, we mean that there is a continuum of solutions starting at
the ring configuration and a specific value of the frequency, which goes to
infinity in the norm of the solution or in the period $1/\nu$, or goes to
collision, or otherwise goes to other relative equilibria in such a way that
the sum of the jumps in the orthogonal degrees is zero. Except for a possible
finite number of $\mu$'s and $\nu$'s, for $\mu$ positive, bounded and
different from $\mu_{k}$, due to resonances, these solutions are geometrically different.

With respect to the symmetries, the solutions in the rotating system, with the
horizontal coordinates $u_{j}(t)$ and vertical coordinate $z_{j}(t)$, satisfy
the following properties, with $\zeta=2\pi/n$: the central body for $j=0$,
\begin{equation}
u_{0}(t)=e^{i\zeta}u_{0}(t+k\zeta),\qquad z_{0}(t)=z_{0}(t+k\zeta)\text{,}%
\end{equation}
and the $n$ equal bodies for $j=1,...,n$,%

\begin{equation}
u_{j}(t)=e^{ij\zeta}u_{n}(t+jk\zeta),\qquad z_{j}(t)=z_{n}(t+jk\zeta)\text{.}%
\end{equation}
Furthermore, the spatial solutions satisfy for $j=0,...,n$ that%
\begin{equation}
u_{j}(t)=u_{j}(t+\pi),\qquad z_{j}(t)=-z_{j}(t+\pi)\text{.}%
\end{equation}
These symmetries are related with choreographies of the $n$ body problem, see
Remark \ref{Cor}.

For $k=n$ and planar solutions, the bifurcation branch consists of solutions
with $u_{0}(t)=0$ and $u_{j}(t)=e^{ij\zeta}u_{n}(t)$, that is all the elements
move as they were in the ring configuration. This branch was constructed in an
explicit way in \cite{MS93}, by reducing the problem to a $6$-dimensional
dynamical system and a normal form argument.

For $k=n$ and spatial solutions, the branch is made of solutions where the
ring moves as a whole and the central body makes the contrary movement in
order to stabilize the forces, that is $z_{j}(t)=z_{n}(t)$ and $\mu
z_{0}(t)=-nz_{n}(t)$. This solution is called an oscillating ring in
\cite{MS93}.

For $k=n/2$ and planar solutions, the bifurcation has the central body fixed
at the center, and two polygons of $n/2$ bodies pulsing each one as a whole.

For $k=n/2$ and spatial solutions, one gets the well known Hip-Hop orbits.
This kind of solutions appears first in the paper \cite{DTW83} without the
central body. Later on, in \cite{MS93} for a big central body in order to
explain the pulsation of the Saturn ring, where they are called kink
solutions. Finally, there is a proof in \cite{BCPS06} when there is no central body.

For our particular setting, the linearization of the system at a critical
point is a ${2n}\times{2n}$ matrix, which is non invertible, due to the
rotational symmetry. These facts imply that the study of the spectrum of the
linearization is not an easy task and that the classical bifurcation results
for periodic solutions may not be applied directly. However, we shall use the
change of variables proved in our previous paper, \cite{GaIz11}, in order to
give not only this spectrum but also the consequences for the symmetries of
the solutions.

The present paper is the last part of our application of the orthogonal degree
to similar dynamical problems. In \cite{GaIz11}, we had a complete study of
the stationary problem, while in \cite{GaIz12} we have considered the
analytically simpler case of vortices and quasi-paralel filaments. Since this
paper is a continuation of \cite{GaIz11}, we shall use the results in that
paper, but we shall recall all the important notions.

The $n$-body problem has been the object of many papers, with different
techniques and different purposes. Therefore, there is a vast literature on
bifurcation of relative equilibria and on their stability, fewer on periodic
solutions. We shall mention the papers which are closer to our research, that
is \cite{MeHa91}, \cite{Ro00}, \cite{MeSc88}, \cite{Sc03}, among others. The
case of a satellite attracted by an array of $n$ masses was treated in
\cite{GaIz10}, and previously, for the case of $n=2$, by A. Maciejewski and S.
Rybicki in \cite{MR04}, using the orthogonal degree for the action of $SO(2)$.
See also \cite{Ga10} for details.

For few bodies one may use a normal form approach, which gives good local
information but is difficult to apply to large systems as the one we have
here. The same thing applies to the Lyapunov center theorem or the
decomposition with a central variety. For the case of relative equilibria, one
may consult \cite{FP08} and \cite{CD98}, among others.

Besides the orthogonal degree, which is designed for equivariant problems with
large symmetries, one may try to use other topological equivariant tools such
as the equivariant Conley index or equivariant variational indices which
effectively give a Weinstein-Moser theorem, \cite{Bar93}, but from a finite
orbit to a $T^{n}$-orbit, which is not the case here.

Variational techniques have been quite successful in treating the problem of
existence of special solutions such as the choreographies and the hip-hop
solutions. In particular, \cite{FT04}, \cite{F06} and \cite{F07}, classify all
the possible groups which give solutions which are minimizers of the action
without collisions, in the plane for the first paper and in space for the
other two. Thus, the issue is different from ours, since one has the proof of
the existence of a solution in the large, with a specific symmetry, but no
multiplicity results or localization of the solutions.

For the choreographies, following the seminal paper \cite{CM00}, with no
central mass, one has studies with more that 3 bodies and minima of the action
in \cite{Ch01} and \cite{BT04}, for instance.

In the case of hip-hop solutions, the first papers such as \cite{DTW83} and
\cite{MS93}, as well as \cite{BCPS06}, did not use variational methods. On the
other hand these methods were successful in \cite{CV00} and \cite{TV07}.

One would like to relate all the different solutions obtained by all these
methods. In the case of the analytical local solutions the linearization is
also used in the application of any degree theory, although in the later case
one may treat degenerate problems or large kernels. The relation between
topological solutions and variational solutions is not easy to establish. For
instance, it is one of the goal of \cite{CF08} to prove that the bifurcation
branch of hip-hop solutions may connect to variational solutions in the large.
In particular, they give a study of the vertical bifurcation with no central
mass and some conditions of non-resonance. The fact that there is no proof of
the uniqueness of the minimizers implies that there is no rigorous proof of a
global continuation and a connection of the hip-hop solutions to the eight choreography.

One of the main advantages of the orthogonal degree is that it applies to
problems which are not necessarily variational, but present conserved
quantities. Furthermore, deformations are easy to construct. From the
theoretical point of view the theory has to be extended to the action of
non-abelian groups and to abstract infinite dimensional spaces (in the case of
Hamiltonian systems of first order the degrees don't stabilize completely: see
\cite{IzVi03}, Lemma 3.6, p.264, and one has to use a simple reduction to a
finite number of modes, but without loss of information: see the section on
the Lyapunov-Schmidt reduction). See also\cite{Ry05} for the case of
gradients. Furthermore, the orthogonal degree gives global branches of
solutions, it takes into account all symmetries and proves the existence of
many solutions, even in the case of possible resonances, where the issue is to
know if the obtained solution is not a subharmonic (see the last section or
\cite{MST06}). As pointed out above, these solutions are different.

Section 2 is devoted to the mathematical setting of the problem, with the
symmetries involved in the general case and in the particular situation of the
regular polygon. We present in Sections 3-5, the preliminary results needed in
order to apply the orthogonal degree theory developed in \cite{IzVi03}, that
is the global Lyapunov-Schmidt reduction, the study of the irreducible
representations, for the general and the polygonal situations, with the change
of variables of \cite{GaIz11}, and the symmetries associated to these
representations. In Section 6, we prove our bifurcation results and, in
Section 7, we give the analysis of the spectrum in the general and the
polygonal situations, with the complete results on the type of solutions which
bifurcate from the relative equilibrium, in the plane and also in space. In
Section 8, we give some comments on the resonance condition, on the stability
and on the case of charged particles.

\section{Setting the problem}

Let $q_{j}(t)\in\mathbb{R}^{3}$ be the position of the body $j\in\{1,...,n\}$
with mass $m_{j}$. Define the $3\times3$ matrices $\bar{I}=diag(1,1,0)$ and
$\bar{J}=diag(J,0)$, where $J$ is the standard symplectic matrix in
$\mathbb{R}^{2}$. In rotating coordinates $q_{j}(t)=e^{\sqrt{\omega}t\bar{J}%
}u_{j}(t)$, Newton equations of the $n$ bodies are%
\[
m_{j}\ddot{u}_{j}+2m_{j}\sqrt{\omega}\bar{J}\dot{u}_{j}=\omega m_{j}\bar
{I}u_{j}-\sum_{i=1(i\neq j)}^{n}m_{i}m_{j}\frac{u_{j}-u_{i}}{\left\Vert
u_{j}-u_{i}\right\Vert ^{\alpha+1}}\text{.}%
\]

Define the vector $u$ as $(u_{1},...,u_{n})$, the matrix of masses
$\mathcal{M}$ as $diag(m_{1}I,...,m_{n}I)$ and the matrix $\mathcal{\bar{J}}$
as $diag(\bar{J},...,\bar{J})$. Then, Newton equations of the $n$ bodies in
vector form are%
\begin{align}
\mathcal{M}\ddot{u}+2\sqrt{\omega}\mathcal{M\bar{J}}\dot{u}  &  =\nabla
V(u)\text{ with }\label{D1.0.1}\\
V(u)  &  =\frac{\omega}{2}\sum_{j=1}^{n}m_{j}\left\Vert \bar{I}u_{j}%
\right\Vert ^{2}+\sum_{i<j}m_{i}m_{j}\phi_{\alpha}(\left\Vert u_{j}%
-u_{i}\right\Vert )\text{.}\nonumber
\end{align}
The function $\phi_{\alpha}(x)$ is defined such that $\phi_{\alpha}^{\prime
}(x)=-1/x^{\alpha}$. The gravitational force is the particular case $\phi
_{2}(x)=1/x$.

Critical points of the potential $V$ correspond to relative equilibria of the
$n$-body problem. This was part of the study done in \cite{GaIz11},
Proposition 1.

\begin{remark}
If one replaces, in the change of coordinates, the term $e^{\sqrt{\omega}%
t\bar{J}}$ with a complex factor $\varphi(t)$ (taking $q_{j}$ and $u_{j}$ as
complex functions instead of a planar vector), where $\varphi$ satisfies the
equation%
\[
{\varphi}^{\prime\prime}=-\omega\varphi/{\arrowvert\varphi\arrowvert}%
^{3}\text{,}%
\]
the equations become%

\begin{align}
(\mathcal{M}\ddot{u}+2(\varphi)^{\prime}/(\varphi)\mathcal{M}\dot
{u}){\arrowvert\varphi\arrowvert}^{3}  &  =\nabla V(u)\text{.}\nonumber
\end{align}
In particular, the stationary solutions of this system are the same solutions
studied in \cite{GaIz11}, and the solutions for $\varphi$ are those of the
Kepler problem, where $\omega$ is the central mass. Thus, the solutions for
$q_{j}$ are now ellipses, parabolas or hyperbolas, instead of circular orbits.
One may have also total collapse or solutions like $ct^{2/3} $, with
$\arrowvert c\arrowvert
=(9\omega/2)^{1/3}$. The bifurcation analysis for periodic solutions starting
near the relative equilibria may be performed in this slightly more general setting.
\end{remark}

To prove bifurcation of periodic solutions near a relative equilibrium, we
need to change variables as $x(t)=u(t/\nu)$. In this way, the $2\pi/\nu
$-periodic solutions of the differential equation are $2\pi$-periodic
solutions of the bifurcation operator%
\begin{align}
f  &  :H_{2\pi}^{2}(\mathbb{R}^{3n}\backslash\Psi)\times\mathbb{R}%
^{+}\rightarrow L_{2\pi}^{2}\label{D1.0.2}\\
f(x,\nu)  &  =-\nu^{2}\mathcal{M}\ddot{x}-2\sqrt{\omega}\nu\mathcal{\bar{J}%
M}\dot{x}+\nabla V(x)\text{.}\nonumber
\end{align}
The set $\Psi=\{x\in\mathbb{R}^{3n}:x_{i}=x_{j}\}$ is the collision set, when
two or more of the bodies collide, and $H_{2\pi}^{2}(\mathbb{R}^{3n}%
\backslash\Psi)$ is the open subset, consisting of the collision-free periodic
(and continuous) functions, of the Sobolev space $H^{2}(\mathbb{R}^{3n})$,%
\[
H_{2\pi}^{2}(\mathbb{R}^{3n}\backslash\Psi)=\{x\in H_{2\pi}^{2}(\mathbb{R}%
^{3n}):x_{i}(t)\neq x_{j}(t)\}.
\]

\begin{definition}
We define the action of $\Gamma= \mathbb{Z}_{2}\times SO(2)$ in $\mathbb{R}%
^{3n}$ as
\[
\rho(\kappa)=\mathcal{R}\text{ and }\rho(\theta)=e^{-\mathcal{\bar{J}\theta}%
},
\]
where $R=diag(1,1,-1)$ and $\mathcal{R}=diag(R,...,R),$ where $\kappa$ and
$\theta$ are elements of $\Gamma$.
\end{definition}

The group $\mathbb{Z}_{2}$ acts by reflection on the $z$-axis and the group
$SO(2)$ acts by rotation in the $(x,y)$-plane. Clearly, the potential $V$ is
invariant by the action of the group $\Gamma$. Consequently, the gradient
$\nabla V$ is $\Gamma$-orthogonal. This means that $\nabla V$ is $\Gamma
$-equivariant, $\rho(\gamma)\nabla V(x)=\nabla V(\rho(\gamma)x)$, and that
$\nabla V(x)$ is orthogonal to the infinitesimal generator,%
\[
A_{1}x=\frac{\partial}{\partial\theta}|_{\theta=0}e^{-\mathcal{\bar{J}\theta}%
}x=-\mathcal{\bar{J}}x\text{.}%
\]
These facts can be proved directly from the definitions.

Given that $\nabla V$ is $\Gamma$-equivariant and the equation is autonomous,
then the map $f$ is $\Gamma\times S^{1}$-equivariant, where the action of
$S^{1}$ is by time translation. Moreover, the infinitesimal generator of the
action of $S^{1}$ in time is $A_{0}x=\dot{x}$. Hence, the map $f$ is
$\Gamma\times S^{1}$-orthogonal because of the equalities%
\begin{align*}
\left\langle f(x),\dot{x}\right\rangle _{L_{2\pi}^{2}}  &  =-\frac{\nu^{2}}%
{2}\left\Vert \mathcal{M}^{1/2}\dot{x}\right\Vert ^{2}|_{0}^{2\pi}-2\nu
\sqrt{\omega}\left\langle \mathcal{\bar{J}M}^{1/2}\dot{x},\mathcal{M}%
^{1/2}\dot{x}\right\rangle _{L_{2\pi}^{2}}+V(x)|_{0}^{2\pi}=0\text{,}\\
\left\langle f(x),\mathcal{\bar{J}}x\right\rangle _{L_{2\pi}^{2}}  &  =\nu
^{2}\left\langle \mathcal{M}^{1/2}\dot{x},\mathcal{\bar{J}M}^{1/2}\dot
{x}\right\rangle _{L_{2\pi}^{2}}-\nu\sqrt{\omega}\left\Vert \mathcal{M}%
^{1/2}x\right\Vert ^{2}|_{0}^{2\pi}+\int_{0}^{2\pi}\left\langle \nabla
V,\mathcal{\bar{J}}x\right\rangle =0\text{.}%
\end{align*}

It is well known that the Newton equations, in fixed coordinates, are
invariant under the action of the group of symmetries $\mathbb{R}^{3}\rtimes
O(3)$ which gives the conservation laws of linear and angular momenta. In
rotating coordinates, the potential $V$ is invariant under the action of the
subgroup $\mathbb{Z}_{2}\times\mathbb{R}\times O(2)$.

Since the matrices $\mathcal{\bar{J}}$ and $\mathcal{R}$ anticommute,
$\mathcal{R\bar{J}}=-\mathcal{\bar{J}R}$, then the operator $f(x)$ is
equivariant under the action of the full group, described by
\begin{equation}
(T^{2}\cup\bar{\kappa}T^{2})\times(\mathbb{R}\cup\kappa\mathbb{R})\text{.}
\label{D1.0.3}%
\end{equation}

The actions of $(\theta,\varphi)\in T^{2}$ and $\bar{\kappa}$ are given by
$(\theta,\varphi)x=e^{-\mathcal{\bar{J}\theta}}x(t+\varphi)$ and $\bar{\kappa
}x(t)=\mathcal{R}x(-t)$, where $\mathcal{R}=diag(R,...,R)$ with
$R=diag(1,-1,1)$. The action of $\mathbb{R}\cup\kappa\mathbb{R}$ is given by
the spatial reflection $\kappa$, which was already defined, and the action of
$r\in\mathbb{R}$ is a translation on the $z$-axis, $rx=x+re$ with
$e_{3}=(0,0,1)$ and $e=(e_{3},...,e_{3})$.

The action of the group $\mathbb{R}$ gives a conservation law of the linear
momentum with respect to the spatial axis. Consequently, any spatial
translation of a relative equilibrium is also an equilibrium. Since the
orthogonal degree is defined only for compact abelian groups, \cite{IzVi03},
p. 70, then we have to restrict the full group of symmetries (\ref{D1.0.3}) to
the abelian subgroup $\Gamma\times S^{1}=T^{2}\times\left\langle
\kappa\right\rangle $. However, in order to follow with this procedure we have
to deal with the missing symmetries of $\mathbb{R}$ by hand. The easy way to
do it consists in restricting the map $f$ to the orthogonal space to
$e\in\mathbb{R}^{3n}$.

\begin{definition}
We define the map $f:\mathcal{W}\cap H_{2\pi}^{2}\rightarrow\mathcal{W}$ as
the restriction of the bifurcation map $f(x)$ to the space
\begin{equation}
\mathcal{W}=\{x\in L_{2\pi}^{2}(\mathbb{R}^{3n}\backslash\Psi):\int_{0}^{2\pi
}(x\cdot e)dt=0\}\text{.} \label{D1.0.4}%
\end{equation}

\end{definition}

From the previous remark, the potential $V$ is invariant under the action of
$\mathbb{R}$. In our approach, this means that $\nabla V(x)$ is orthogonal to
the infinitesimal generator of the action $\nabla V(x)\cdot e=0$, a fact which
may be proved directly from the definitions. Consequently, the map
$f:\mathcal{W}\cap H_{2\pi}^{2}\rightarrow\mathcal{W}$ is well defined since%
\[
\int_{0}^{2\pi}(f(x)\cdot e)dt=\int_{0}^{2\pi}(\nabla V(x)\cdot e)dt=0\text{.}%
\]

\subsection{General relative equilibria}

All relative equilibria are planar. Moreover, any spatial translation of a
relative equilibrium is also an equilibrium. Nevertheless, the only relative
equilibrium in $\mathcal{W}$ of the family of translations is the one in the
$(x,y)$-plane. Therefore, the positions $(u_{j},0)$ form a relative
equilibrium in $\mathcal{W}$ if they satisfy the relations%
\[
\omega u_{i}=\sum_{j=1~(j\neq i)}^{n}m_{j}\frac{u_{i}-u_{j}}{\left\Vert
u_{i}-u_{j}\right\Vert ^{\alpha+1}}\text{ with }u_{j}\in\mathbb{R}^{2}.
\]

\begin{remark}
Since the potential is homogenous, then any scaling of a relative equilibrium
is also an equilibrium. Hence, in principle we may choose the relative
equilibrium with $\omega=1$. However, we shall leave $\omega$ as a parameter
because it is easier to fix the norm of the polygonal equilibrium than the frequency.
\end{remark}

Since every relative equilibrium $x_{0}$ is planar, then the action of
$\kappa\in{\Gamma}$ leaves fixed $x_{0}$. Therefore, the isotropy subgroup of
$x_{0}$ in $\Gamma\times S^{1}$ is $\Gamma_{x_{0}}\times S^{1}$ with
$\Gamma_{x_{0}}= \left\langle \kappa\right\rangle =\mathbb{Z}_{2}$. Thus, the
orbit of $x_{0}$ is isomorphic to the group $S^{1}$: the orbit of the
equilibrium $x_{0}$ consists of all the rotations of $x_{0}$ in the $(x,y)$-plane.

Notice that the generator of the spatial group $\Gamma$ at $x_{0}$ is
$A_{1}x_{0}=-\mathcal{\bar{J}}x_{0}$, then $-\mathcal{\bar{J}}x_{0}$ is
tangent to the orbit and must be in the kernel of $D^{2}V(x_{0})$.

Now, since $\mathbb{Z}_{2}$ is in the isotropy subgroup of $x_{0}$, then
$D^{2}V(x_{0})$ is $\mathbb{Z}_{2}$-equivariant. From Schur's lemma, see
\cite{IzVi03}, p. 18, the matrix $D^{2}V(x_{0})$ must have a block diagonal
form. In the following proposition we prove directly this fact.

\begin{proposition}
\label{E1.1.1} Let $\mathcal{A}_{ij}$ be the $3\times3$ blocks such that
\[
D^{2}V(x_{0})=(\mathcal{A}_{ij})_{ij=1}^{n},
\]
then the matrices $\mathcal{A}_{ij}$ have the diagonal form
\[
\mathcal{A}_{ij}=diag(A_{ij},a_{ij}).
\]

Let $d_{ij}$ be the distance between $u_{i}$ and $u_{j}$, and let
$(x_{j},y_{j})$ be the components of $u_{j}$. For $i\neq j$, we have that
$a_{ij}=m_{i}m_{j}/d_{ij}^{\alpha+1}$ and
\begin{equation}
A_{ij}=-\frac{m_{i}m_{j}}{d_{ij}^{\alpha+3}}\left(
\begin{array}
[c]{cc}%
(\alpha+1)(x_{i}-x_{j})^{2}-d_{ij}^{2} & (\alpha+1)(x_{i}-x_{j})(y_{i}%
-y_{j})\\
(\alpha+1)(x_{i}-x_{j})(y_{i}-y_{j}) & (\alpha+1)(y_{i}-y_{j})^{2}-d_{ij}^{2}%
\end{array}
\right)  \text{.} \label{D1.1.1}%
\end{equation}
Moreover, for $i=j$ we have the equalities
\[
a_{ii}=-\sum_{j=1~(j\neq i)}^{n}a_{ij}\text{ and }A_{ii}=\omega m_{i}%
I-\sum_{j\neq i}A_{ij}.
\]

\end{proposition}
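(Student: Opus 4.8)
The plan is to compute $D^{2}V(x_{0})$ directly, splitting $V$ into its quadratic rotational part and the pairwise interaction sum, differentiating each separately, and only substituting the planar configuration $x_{0}$ at the end; the block structure anticipated by Schur's lemma will then drop out of the formulas.

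First I would write $V=V_{1}+V_{2}$ with $V_{1}(u)=\tfrac{\omega}{2}\sum_{j}m_{j}\|\bar{I}u_{j}\|^{2}$ and $V_{2}(u)=\sum_{i<j}m_{i}m_{j}\phi_{\alpha}(\|u_{i}-u_{j}\|)$. The Hessian of $V_{1}$ is immediate and block diagonal: the $(i,i)$ block is $\omega m_{i}\bar{I}$ and all off-diagonal blocks vanish. For $V_{2}$ the useful observation is that every pairwise term shares, up to sign, the same gradient vector field: with $g(s)=-s/\|s\|^{\alpha+1}$ one has $\nabla_{u_{i}}\big(m_{i}m_{j}\phi_{\alpha}(\|u_{i}-u_{j}\|)\big)=m_{i}m_{j}\,g(u_{i}-u_{j})$ (using $\phi_{\alpha}'(x)=-x^{-\alpha}$), and a one-line computation gives
\[
Dg(s)=-\frac{1}{\|s\|^{\alpha+1}}I+(\alpha+1)\frac{ss^{T}}{\|s\|^{\alpha+3}}.
\]
By the chain rule the only interaction term contributing to an off-diagonal block $\mathcal{A}_{ij}$, $i\neq j$, is the $(i,j)$ pair, giving $\mathcal{A}_{ij}=-m_{i}m_{j}\,Dg(u_{i}-u_{j})$ (the extra minus sign being $\partial(u_{i}-u_{j})/\partial u_{j}=-I$); and differentiating twice in $u_{i}$ the sum of all pairs involving $i$, together with the $V_{1}$ contribution, gives $\mathcal{A}_{ii}=\omega m_{i}\bar{I}+\sum_{j\neq i}m_{i}m_{j}\,Dg(u_{i}-u_{j})=\omega m_{i}\bar{I}-\sum_{j\neq i}\mathcal{A}_{ij}$. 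So far no property of $x_{0}$ has been used beyond differentiability; these identities hold at any collision-free configuration.

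Finally I would insert the planarity of the relative equilibrium: since every $u_{j}$ lies in the $(x,y)$-plane, $s=u_{i}-u_{j}=(x_{i}-x_{j},\,y_{i}-y_{j},\,0)$, so $ss^{T}$ has vanishing third row and column while $I$ is diagonal. Hence each $\mathcal{A}_{ij}$ is block diagonal, $\mathcal{A}_{ij}=\mathrm{diag}(A_{ij},a_{ij})$; reading off the $(3,3)$ entry of $-m_{i}m_{j}Dg(s)$ gives $a_{ij}=m_{i}m_{j}/d_{ij}^{\alpha+1}$ for $i\neq j$, and its upper $2\times2$ block, after writing the two terms over the common denominator $d_{ij}^{\alpha+3}$, is exactly the matrix $A_{ij}$ of (\ref{D1.1.1}). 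The diagonal blocks inherit the decomposition from $\mathcal{A}_{ii}=\omega m_{i}\bar{I}-\sum_{j\neq i}\mathcal{A}_{ij}$: the $(3,3)$ entry is $a_{ii}=-\sum_{j\neq i}a_{ij}$ (the rotational part contributes nothing in the $z$-slot), and the upper block is $A_{ii}=\omega m_{i}I-\sum_{j\neq i}A_{ij}$.

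The computation has no genuine obstacle; the only points deserving care are the sign bookkeeping distinguishing $\nabla^{2}_{u_{i}u_{j}}$ from $\nabla^{2}_{u_{i}u_{i}}$, and the remark that the abstract block-diagonal form forced by $\mathbb{Z}_{2}$-equivariance and Schur's lemma is realized explicitly here precisely because $x_{0}$ is planar.
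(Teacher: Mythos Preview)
Your proof is correct and follows essentially the same route as the paper: both compute the Hessian of the pairwise interaction term directly (the paper writes out $D_{u_{i}}^{2}\phi_{\alpha}(d_{ij})$ explicitly, you package it as $Dg(s)$), use the sign relation $\nabla_{u_{i}}\phi=-\nabla_{u_{j}}\phi$ to handle the off-diagonal blocks, and read off the diagonal block from $\omega m_{i}\bar I-\sum_{j\neq i}\mathcal{A}_{ij}$. The only organizational difference is that the paper substitutes the planar configuration at the outset whereas you defer it to the last step, which is immaterial.
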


\begin{proof}
The function $\phi_{\alpha}(d_{ij})$ has the second derivative matrix%
\[
D_{u_{i}}^{2}\phi_{\alpha}(d_{ij})=\frac{\alpha+1}{d_{ij}^{\alpha+3}}\left(
\begin{array}
[c]{ccc}%
(x_{i}-x_{j})^{2} & (x_{i}-x_{j})(y_{i}-y_{j}) & 0\\
(x_{i}-x_{j})(y_{i}-y_{j}) & (y_{i}-y_{j})^{2} & 0\\
0 & 0 & 0
\end{array}
\right)  -\frac{I}{d_{ij}^{\alpha+1}}\text{.}%
\]
For $i\neq j$, since $\nabla_{u_{i}}\phi(d_{ij})=-\nabla_{u_{j}}\phi(d_{ij})$,
then%
\[
\mathcal{A}_{ij}=m_{i}m_{j}D_{u_{j}}\nabla_{u_{i}}\phi(d_{ij})=-m_{i}%
m_{j}D_{u_{i}}^{2}\phi(d_{ij})=diag(A_{ij},a_{ij})\text{.}%
\]
And for $i=j$ the matrix $\mathcal{A}_{ii}$ satisfies%
\[
\mathcal{A}_{ii}=\omega m_{i}\bar{I}+\sum_{j\neq i}m_{i}m_{j}D_{u_{i}}^{2}%
\phi(d_{ij})=\omega m_{i}\bar{I}-\sum_{j\neq i}\mathcal{A}_{ij}=diag(A_{ii}%
,a_{ii})\text{.}%
\]

\end{proof}

If we analyze the $n$-body problem in the plane instead of the space, then the
potential of the planar problem has the Hessian $D^{2}V(x_{0})=(A_{ij}%
)_{ij=1}^{n}$. Because of this fact, we say that $(A_{ij})_{ij=1}^{n}$ will
give the planar spectrum and $(a_{ij})_{ij=1}^{n}$ the spatial spectrum.

\subsection{The polygonal equilibrium}

Identifying the real and complex planes, a relative equilibrium is given by
the positions $(a_{j},0)\in\mathbb{R}^{3}$ with $a_{j}\in\mathbb{C}$. Defining
$\zeta=2\pi/n$, the polygonal equilibrium is formed by $n+1$ bodies: one body,
with mass $m_{0}=\mu$, at $a_{0}=0$, and one body for each $j\in\{1,...,n\}$,
with mass $m_{j}=1$, at $a_{j}=e^{ij\zeta}$.

It is easy to prove, for instance \cite{GaIz11}, Proposition 1, that the ring
configuration $\bar{a}=(a_{0},...,a_{n})$ is a relative equilibrium when
$\omega=\mu+s_{1}$, with
\[
s_{1}=\frac{1}{2^{\alpha}}\sum_{j=1}^{n-1}\frac{1}{\sin^{(\alpha-1)}%
(j\zeta/2)}\text{.}%
\]

In this paper, the norm of the polygonal equilibrium is fixed and $\omega
=\mu+s_{1}$ is a free parameter. In the paper \cite{GaIz11}, Theorem 24, we
have proved bifurcation of relative equilibria from the polygonal equilibrium
with the parameter $\mu$ . Now, we wish to analyze the bifurcation of planar
and spatial periodic solutions.

\begin{definition}
Let $S_{n}$ be the group of permutations of $\{1,...,n\}$. We define the
action of $S_{n}$ in $\mathbb{R}^{2(n+1)}$ as%
\[
\rho(\gamma)(x_{0},x_{1},...,x_{n})=(x_{0},x_{\gamma(1)},...,x_{\gamma(n)}).
\]

\end{definition}

As for the general case, the map $f$ is $(\mathbb{Z}_{2}\times SO(2))\times
S^{1}$-orthogonal. Moreover, since $n$ of the bodies have equal mass, then the
gradient $\nabla V$ is $S_{n}$-equivariant. Therefore, we may consider the map
$f$ as $\Gamma\times S^{1}$-orthogonal with respect to the abelian group%
\[
\Gamma=\mathbb{Z}_{2}\times\mathbb{Z}_{n}\times SO(2),
\]
where $\mathbb{Z}_{n}$ is the subgroup of $S_{n}$ generated by $\zeta(j)=j+1$.

Let $\mathbb{Z}_{n}(\zeta,\zeta)$ be the subgroup of $\Gamma$ generated by
$(\zeta,\zeta)\in\mathbb{Z}_{n}\times SO(2)$ with $\zeta=2\pi/n\in SO(2)$.
Since the actions of $(\zeta,\zeta)$, given by taking the $j$-th element in
the plane $x_{j}$ to $e^{-J\zeta}x_{j+1}$, for $j\in{1,.,.,.,n}$ and $x_{0}$
to $e^{-J\zeta}x_{0}$, and $\kappa\in\mathbb{Z}_{2}$, fixing the plane, leave
the equilibrium $\bar{a}$ fixed, then the isotropy group of $\bar{a}$ is
${\Gamma}_{\bar{a}}\times S^{1}$ with%
\[
\Gamma_{\bar{a}}=\mathbb{Z}_{2}(\kappa)\times\mathbb{Z}_{n}(\zeta,\zeta).
\]

\section{The Lyapunov-Schmidt reduction}

The orthogonal degree was defined only in finite dimension. There is a
theoretical difficulty for the extension to abstract infinite dimensional
spaces, in particular, for strongly indefinite problems, in the sense that the
degrees defined on finite dimensional approximations do not stabilize, as the
classical Leray-Schauder degree, when increasing the dimension, although the
changes are easy to compute: see, for instance, the case of Hamiltonian
systems treated in \cite{IzVi03}, Lemma 3.6, p. 264. In the present situation,
one has a very simple way of overcoming this difficulty, without any loss of
information and, at the same time, avoiding many technical problems of
functional analysis.

Thus, we must perform first a reduction to some finite space .

The bifurcation operator $f$, in Fourier series, is%
\[
f(x)=\sum_{l\in\mathbb{Z}}(l^{2}\nu^{2}\mathcal{M}x_{l}-2l\nu\sqrt{\omega
}(i\mathcal{\bar{J})M}x_{l}+g_{l})e^{ilt}\text{,}%
\]
where $x_{l}$ and $g_{l}$ are the Fourier modes of $x$ and $\nabla V(x)$,
which is relatively compact, by Sobolev embedding, with respect to the second
order terms. Since all the masses are non-zero, then the matrix $l^{2}\nu
^{2}\mathcal{M}I-2il\nu\sqrt{\omega}\mathcal{\bar{J}M}$ is invertible for all
large $l\nu$'s, for $\nu>0$. Therefore, if we choose a big enough $p$, we can
solve the modes $\left\vert l\right\vert >p$ in terms of the remaining $2p+1$
Fourier modes, on any bounded subset of $\mathcal{W}$ with frequencies $\nu$
uniformly bounded from below, with an application of the global implicit
function theorem, \cite{IzVi03}, p. 257.

In this way, we get that the zeros of the bifurcation operator $f$ correspond
to the zeros of the bifurcation function
\[
f(x_{1},x_{2}(x_{1},\nu),\nu)=\sum_{\left\vert l\right\vert \leq p}(l^{2}%
\nu^{2}\mathcal{M}x_{l}-2l\nu\sqrt{\omega}(i\mathcal{\bar{J})M}x_{l}%
+g_{l})e^{ilt}\text{,}%
\]
where $x_{1}$ corresponds to the $2p+1$ Fourier modes and $x_{2}$ to the
solution for the remaining modes. Then, the linearization of the bifurcation
function at some equilibrium $x_{0}$ is%
\[
f^{\prime}(x_{0})=\sum_{\left\vert l\right\vert \leq p}\left(  l^{2}\nu
^{2}\mathcal{M}-2l\nu\sqrt{\omega}(i\mathcal{\bar{J})M}+D^{2}V(x_{0})\right)
x_{l}e^{ilt}\text{.}%
\]

Since the bifurcation operator is real, the linearization is determined by the
blocks $M(l\nu)$ for $l\in\{0,...,p\}$, where $M(\nu)$ is the matrix%
\begin{equation}
M(\nu)=\nu^{2}\mathcal{M}-2\nu\sqrt{\omega}(i\mathcal{\bar{J})M}+D^{2}%
V(x_{0})\text{.} \label{D2.1.0}%
\end{equation}

Each one of the blocks $M(l\nu)$ corresponds to a Fourier mode:since the
orbits are real, one has that the $x_{l}$, for negative $l$'s are the
conjugates of $x_{-l}$. This is why we are considering $x_{l}\in\mathbb{C}$,
for $l\in\{0,...,p\}$. Since we restrict the operator $f(x)$ to the subspace
of $H_{2\pi}^{2}(\mathbb{R}^{3n}\backslash\Psi)$ orthogonal to $e$, then the
block for the first Fourier mode $M(0)$ must be restricted to the orthogonal
space to $e$. We shall denote this restriction as $M(0)^{\perp e}$.

\section{Irreducible representations}

The following step in order to use the orthogonal degree, \cite{IzVi03}
Theorem 3.1, p. 247, consists in identifying the irreducible representations,
that is, for the action of the isotropy group $\Gamma_{\bar{ a}}$ and
obtaining the decomposition of the matrix $M(\nu)$.

\subsection{The general relative equilibria}

We show first, for $n$ arbitrary bodies in the plane, how the matrix $M(\nu)$
decomposes into two blocks by the action of the isotropy group $\Gamma_{x_{0}%
}=\mathbb{Z}_{2}$. The two blocks correspond to the planar and spatial spectra
as in the previous decomposition of $D^{2}V(x_{0})$.

Since the group $\mathbb{Z}_{2}$ acts by reflection on the $z$-axis, then the
equivalent irreducible representations for $\mathbb{C}^{3n}$, leading to the
decomposition of $M(\nu)$, are%
\begin{align*}
V_{0}  &  =\{(x_{1},y_{1},0,...,x_{n},y_{n},0):x_{j},y_{j}\in\mathbb{C}%
\}\text{ and}\\
V_{1}  &  =\{(0,0,z_{1},...,0,0,z_{n}):z_{j}\in\mathbb{C}\}\text{.}%
\end{align*}
The action of $\kappa\in\mathbb{Z}_{2}$ on $V_{0}$ is given by $\rho
(\kappa)=I$ and on $V_{1}$ by $\rho(\kappa)=-I$. Define the isomorphisms
$T_{0}:\mathbb{C}^{2n}\rightarrow V_{0}$ and $T_{1}:\mathbb{C}^{n}\rightarrow
V_{1}$ as
\begin{align*}
T_{0}(x_{1},y_{1},...,x_{n},y_{n})  &  =(x_{1},y_{1},0,...,x_{n}%
,y_{n},0)\text{,}\\
T_{1}(z_{1},...,z_{n})  &  =(0,0,z_{1},...,0,0,z_{n})\text{,}%
\end{align*}
and define the linear orthogonal map $P$%
\[
P(x_{1},y_{1},z_{1},...,x_{n},y_{n},z_{n})=T_{0}(x_{1},y_{1},...,x_{n}%
,y_{n})+T_{1}(z_{1},...,z_{n})\text{.}%
\]
The transformation $P$ rearranges the planar and spatial coordinates.

Since $V_{0}$ and $V_{1}$ are subspaces of equivalent irreducible
representations, Schur's lemma implies that the matrix $M(\nu)$ must satisfy%
\[
P^{-1}M(\nu)P=diag(M_{0}(\nu),M_{1}(\nu))\text{,}%
\]
where $M_{0}$ and $M_{1}$ are respectively $2n\times2n$ and $n\times n$
matrices. In fact, we will exhibit explicitly the matrices $M_{0}$ and $M_{1}
$, in the following result:

\begin{proposition}
\label{E3.1.0} Define the matrices $\mathcal{M}_{1}=(m_{1},...,m_{n})$,
$\mathcal{M}_{2}=diag(m_{1},m_{1},...,m_{n},m_{n})$, and $\mathcal{J}%
=diag(J,...,J)$, then the blocks $M_{0}$ and $M_{1}$ are
\begin{align*}
M_{0}(\nu)  &  =\nu^{2}\mathcal{M}_{2}-2\nu\sqrt{\omega}(i\mathcal{J)M}%
_{2}+(A_{ij})_{ij=1}^{n},\\
M_{1}(\nu)  &  =\nu^{2}\mathcal{M}_{1}+(a_{ij})_{ij=1}^{n}.
\end{align*}

\end{proposition}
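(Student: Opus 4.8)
The plan is to show that the coordinate rearrangement $P$ turns $M(\nu)$ into the claimed block-diagonal form by a direct computation, term by term in the definition (\ref{D2.1.0}). First I would observe that $P$ is, up to a permutation of basis vectors, precisely the map that groups the $2n$ planar coordinates $(x_1,y_1,\dots,x_n,y_n)$ together and the $n$ vertical coordinates $(z_1,\dots,z_n)$ together; it carries $V_0\oplus V_1$ to $\mathbb{C}^{2n}\oplus\mathbb{C}^n$ via $T_0^{-1}\oplus T_1^{-1}$. Since $M(\nu)$ commutes with $\rho(\kappa)$ (because $\mathbb{Z}_2\subset\Gamma_{x_0}$, so $D^2V(x_0)$ is $\mathbb{Z}_2$-equivariant, and $\mathcal{M}$, $\mathcal{\bar J}$ are built from blocks that commute with $R$ while... note $\mathcal{\bar J}$ actually anticommutes with $\mathcal{R}$ in general but $R$ here is the reflection on the $z$-axis, and $\bar J$ acts trivially on $z$, so on the relevant $3\times 3$ blocks everything respects the $V_0/V_1$ splitting), Schur's lemma already guarantees that $P^{-1}M(\nu)P$ is block diagonal with blocks of sizes $2n$ and $n$. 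So the real content is to identify the two blocks explicitly.

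The key steps are then as follows. (1) For the $D^2V(x_0)$ term: by Proposition \ref{E1.1.1}, each $3\times 3$ block $\mathcal{A}_{ij}$ has the form $\mathrm{diag}(A_{ij},a_{ij})$, so after applying $P$ the planar--planar part assembles into $(A_{ij})_{ij=1}^n$ acting on $\mathbb{C}^{2n}$ and the vertical--vertical part into $(a_{ij})_{ij=1}^n$ acting on $\mathbb{C}^n$, with no cross terms. (2) For the mass term $\nu^2\mathcal{M}$: $\mathcal{M}=\mathrm{diag}(m_1 I,\dots,m_n I)$ with $I$ the $3\times 3$ identity; restricting to the planar coordinates gives $\mathcal{M}_2=\mathrm{diag}(m_1,m_1,\dots,m_n,m_n)$ and restricting to the vertical coordinates gives $\mathcal{M}_1=\mathrm{diag}(m_1,\dots,m_n)$. (3) For the gyroscopic term $-2\nu\sqrt\omega(i\mathcal{\bar J})\mathcal{M}$: since $\bar J=\mathrm{diag}(J,0)$ kills the $z$-coordinate, this term restricts to $-2\nu\sqrt\omega(i\mathcal{J})\mathcal{M}_2$ on $\mathbb{C}^{2n}$ and to $0$ on $\mathbb{C}^n$. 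Collecting (1)--(3) and reading off the $2n$-block and the $n$-block gives exactly the stated formulas for $M_0(\nu)$ and $M_1(\nu)$.

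The only mild subtlety — and the step I would be most careful about — is bookkeeping: keeping track of how $P$ interleaves the $(x_j,y_j)$ pairs so that the planar Hessian blocks $A_{ij}$ really land in the positions $(2i-1,2i)\times(2j-1,2j)$ of a genuine $2n\times 2n$ matrix, and checking that the factor $i\mathcal{\bar J}$ is legitimately replaced by $i\mathcal{J}$ with $\mathcal{J}=\mathrm{diag}(J,\dots,J)$ of size $2n$ once the vertical directions are removed. None of this is deep, but it is where an index error could creep in. I would therefore write the argument by exhibiting $P$ as a block-permutation matrix and computing $P^{-1}(\nu^2\mathcal{M})P$, $P^{-1}((i\mathcal{\bar J})\mathcal{M})P$, and $P^{-1}D^2V(x_0)P$ separately, each in one line, and then summing.
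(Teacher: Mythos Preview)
Your proposal is correct and follows essentially the same approach as the paper: both conjugate $M(\nu)$ by the coordinate-rearranging map $P$ and compute $P^{-1}\mathcal{M}P$, $P^{-1}\mathcal{\bar J}P$, and $P^{-1}D^2V(x_0)P$ separately using the block structure $\mathcal{A}_{ij}=\mathrm{diag}(A_{ij},a_{ij})$ from Proposition~\ref{E1.1.1}, then sum. The paper's proof is simply the terse version of what you describe, without the extra commentary on Schur's lemma or the bookkeeping caveats.
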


\begin{proof}
Since $P$ rearranges the planar and spatial coordinates and $D^{2}%
V(x_{0})=(\mathcal{A}_{ij})_{ij=1}^{n}$, with $\mathcal{A}_{ij}=diag(A_{ij}%
,a_{ij})$, then
\[
P^{-1}D^{2}V(x_{0})P=diag((A_{ij})_{ij=1}^{n},(a_{ij})_{ij=1}^{n})\text{.}%
\]
Moreover,%
\[
P^{-1}\mathcal{M}P=diag(\mathcal{M}_{2},\mathcal{M}_{1})\text{ and }%
P^{-1}\mathcal{\bar{J}}P=diag(\mathcal{J},0)\text{.}%
\]
Therefore,%
\begin{align*}
P^{-1}M(\nu)P  &  =\nu^{2}diag(\mathcal{M}_{2},\mathcal{M}_{1})-2\nu
\sqrt{\omega}diag((i\mathcal{J)M}_{1},0)\\
&  +diag((A_{ij})_{ij=1}^{n},(a_{ij})_{ij=1}^{n})\text{.}%
\end{align*}

\end{proof}

The block $M(0)$ must be restricted to the space orthogonal to $e$. Since
$e\in V_{1}$, with $T_{1}(1,...,1)=e$, then, in the new coordinates, the block
$M_{1}(0)$ must be restricted to the space orthogonal to $(1,...,1)$.

For the study of the planar $n$-body problem, this procedure gives the matrix
$M_{0}(\nu)$ instead of $M(\nu)$.

\subsubsection*{Planar representation}

The action of the isotropy group $\mathbb{Z}_{2}\times S^{1}$ on the space
$V_{0}$ is given by
\[
(\kappa,\varphi)x=e^{i\varphi}x\text{.}%
\]
Since $\kappa\in\mathbb{Z}_{2}$ fixes the points of $V_{0}$, then the space
$V_{0}$ has its isotropy subgroup generated by $\kappa$,%
\[
\mathbb{Z}_{2}(\kappa).
\]

Thus, solutions $x(t)$ with isotropy group $\mathbb{Z}_{2}$ must satisfy
\[
x(t)=\kappa x(t)=Rx(t).
\]
Consequently, orbits in $V_{0}$ are planar, $z_{j}(t)=0$.

\subsubsection{Spatial representation}

In $V_{1}$ the action of the group $\mathbb{Z}_{2}\times S^{1}$ is%
\[
(\kappa,\varphi)x=-e^{i\varphi}x.
\]
Since the action of $(\kappa,\pi)$ fixes the points of $V_{1}$, then the space
$V_{1}$ has its isotropy subgroup generated by $(\kappa,\pi)$,
\[
\mathbb{Z}_{2}(\kappa,\pi).
\]

Solutions $x(t)$ with isotropy group $\mathbb{Z}_{2}$ must satisfy%
\[
x(t)=(\kappa,\pi)x(t)=Rx(t+\pi).
\]
Consequently, orbits in $V_{1}$ satisfy the symmetry%
\begin{equation}
x_{j}(t)=x_{j}(t+\pi)\text{, }y_{j}(t)=y_{j}(t+\pi)\text{ and }z_{j}%
(t)=-z_{j}(t+\pi). \label{D3.2.0}%
\end{equation}

From the symmetry (\ref{D3.2.0}), we have that $z_{j}(t_{j})=0$ for some
$t_{j}$. This means that these solutions oscillate around the $(x,y)$-plane
and that the projection of the curve in the $(x,y)$-plane$\ $is $\pi
$-periodic. Furthermore, these solutions go around the projected $\pi
$-periodic curve once with the spatial coordinate $z_{j}(t)$ and once with
$-z_{j}(t)$. These solutions look like eights around the equilibrium points,
and we shall call them spatial eights.

\subsection{The polygonal equilibrium}

In the case of the polygonal equilibrium, the isotropy group has also the
action of $\mathbb{\tilde{Z}}_{n}$. Notice that the action of $(\zeta
,\zeta)\ $in $V_{k}$, $k=0,1$, is given by%
\begin{align*}
(\zeta,\zeta)T_{0}(x_{0},y_{0},...,x_{n},y_{n})  &  =T_{0}(e^{\mathcal{J}%
\zeta}(x_{0},y_{0},x_{\zeta(1)},...,y_{\zeta(n)}))\text{,}\\
(\zeta,\zeta)T_{1}(z_{0},...,z_{n})  &  =T_{1}(z_{0},z_{\zeta(1)}%
,...,z_{\zeta(n)})\text{.}%
\end{align*}
Thus, we expect a decomposition of the space $V_{k}$ into smaller irreducible
representations due to the action of $\mathbb{Z}_{n}(\zeta,\zeta)$.

\subsubsection{Planar representations}

We give first the decomposition of the space $V_{0}$ into smaller irreducible
representations. Since the representations for $n=2$ are different from those
for $n\geq3$, we shall restrict the study to the case $n\geq3$ and we shall
give some comments on the case $n=2$ at the end of the paper.

\begin{definition}
For $k\in\{2,...,n-2,n\}$, define the isomorphisms $T_{k}:\mathbb{C}%
^{2}\rightarrow W_{k}$ as%
\begin{align*}
T_{k}(w)  &  =(0,n^{-1/2}e^{(ikI+J)\zeta}w,...,n^{-1/2}e^{n(ikI+J)\zeta
}w)\text{ with}\\
W_{k}  &  =\{(0,e^{(ikI+J)\zeta}w,...,e^{n(ikI+J)\zeta}w):w\in\mathbb{C}%
^{2}\}\text{.}%
\end{align*}
For $k\in\{1,n-1\}$, define the isomorphism $T_{k}:$ $\mathbb{C}%
^{3}\rightarrow W_{k}$ as
\begin{align*}
T_{k}(\alpha,w)  &  =(v_{k}\alpha,n^{-1/2}e^{(ikI+J)\zeta}w,...,n^{-1/2}%
e^{n(ikI+J)\zeta}w)\text{ with}\\
W_{k}  &  =\{(v_{k}\alpha,e^{(ikI+J)\zeta}w,...,e^{n(ikI+J)\zeta}w):\alpha
\in\mathbb{C},w\in\mathbb{C}^{2}\}\text{,}%
\end{align*}
where $v_{1}$ and $v_{n-1}$ are the vectors%
\[
v_{1}=2^{-1/2}\left(  1,i\right)  \text{ and }v_{n-1}=2^{-1/2}\left(
1,-i\right)  .
\]

\end{definition}

We have proved, in the paper \cite{GaIz11}, p.3207, that the subspaces $W_{k}$
form the irreducible representations of $\mathbb{Z}_{n}(\zeta,\zeta)$ in
$\mathbb{C}^{2(n+1)}\simeq V_{0}$. Also, we proved, in \cite{GaIz11},
Proposition 5, that the action of $(\zeta,\zeta,\varphi)\in\mathbb{\tilde{Z}%
}_{n}\times S^{1}$ on $W_{kl}$, for the $l$-th Fourier mode, is given by
\[
\rho(\zeta,\zeta,\varphi)=e^{ik\zeta}e^{il\varphi}\text{.}%
\]
Consequently, the isotropy subgroup of $\Gamma_{\bar{a}}\times\mathbb{S}^{1}$
for the block $W_{k}$, for the fundamental mode, is generated by $\kappa
\in\mathbb{Z}_{2}$ and $\left(  \zeta,\zeta,-k\zeta\right)  \in\mathbb{Z}%
_{n}(\zeta,\zeta)\times S^{1}$, that is,%

\[
\mathbb{Z}_{n}\left(  \zeta,\zeta,-k\zeta\right)  \times\mathbb{Z}_{2}%
(\kappa).
\]

Since the subspaces $W_{k}$ are orthogonal to each other, then the linear map%
\[
Pw=\sum_{k=1}^{n}T_{k}(w_{k})
\]
is orthogonal, where $w=(w_{1},...,w_{n})$, with $w_{k}\in\mathbb{C}^{3}$ for
$k=1,n-1$ and $w_{k}\in\mathbb{C}^{2}$ for the remaining $k$'s.

Since the map $P$ rearranges the irreducible representations of $\mathbb{Z}%
_{n}(\zeta,\zeta)$, from Schur's lemma, we must have that%
\[
P^{-1}(A_{ij})_{ij=1}^{n}P=diag(B_{1},...,B_{n})\text{,}%
\]
where $B_{k}$ are matrices such that $(A_{ij})_{ij=1}^{n}T_{k}(w)=T_{k}%
(B_{k}w)$. In fact, in the paper \cite{GaIz11}, Propositions 18 and 19, we
gave the blocks $B_{k}$ for the general case $\alpha\geq1$. In the next
proposition, we state this result.

\begin{proposition}
\label{E3.2.0} Set $\alpha_{+}=(\alpha+1)/2$ and $\alpha_{-} = (\alpha-1)/2$.
Define
\[
\alpha_{k}=\alpha_{-}(s_{k+1}+s_{k-1})/2\text{, }\beta_{k}=\alpha_{+}%
(s_{k}-s_{1})\text{ and }\gamma_{k}=\alpha_{-}(s_{k+1}-s_{k-1})/2
\]
with%
\[
s_{k}=\frac{1}{2^{\alpha}}\sum_{j=1}^{n-1}\frac{\sin^{2}(kj\zeta/2)}%
{\sin^{\alpha+1}(j\zeta/2)}\text{.}%
\]
Then, the blocks satisfy $B_{n-k}=\bar{B}_{k}$ and they are given by%
\[
B_{k}=\alpha_{+}(I+R)\mu+(s_{1}+\alpha_{k})I-\beta_{k}R-\gamma_{k}iJ\text{,}%
\]
for $k\in\{2,...,n-2,n\}$, and%
\[
B_{1}=\left(
\begin{array}
[c]{ccc}%
\mu\left(  s_{1}+\mu+n\alpha_{-}\right)  & -\sqrt{n/2}\mu\alpha & -\sqrt
{n/2}\mu i\\
-\sqrt{n/2}\mu\alpha & s_{1}+\alpha_{1}+(\alpha+1)\mu & \alpha_{1}i\\
\sqrt{n/2}\mu i & -\alpha_{1}i & s_{1}+\alpha_{1}%
\end{array}
\right)  \text{.}%
\]

\end{proposition}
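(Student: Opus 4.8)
This proposition is a restatement of Propositions~18 and~19 of \cite{GaIz11}; the shortest proof is to quote that computation, and the plan I would follow to reconstruct it is to perform a twisted discrete Fourier (block-circulant) diagonalization of the planar Hessian $(A_{ij})$. First I would write the blocks explicitly from Proposition~\ref{E1.1.1}: expressing the rank-one matrix $(u_{i}-u_{j})(u_{i}-u_{j})^{T}$ as $\tfrac12 d_{ij}^{2}(I+\Sigma_{ij})$, with $\Sigma_{\psi}=\bigl(\begin{smallmatrix}\cos\psi&\sin\psi\\\sin\psi&-\cos\psi\end{smallmatrix}\bigr)$ and $\Sigma_{ij}$ the reflection in the line through $a_{i}$ and $a_{j}$, and with $\alpha_{\pm}=(\alpha\pm1)/2$,
\[
A_{ij}=-\frac{m_{i}m_{j}}{d_{ij}^{\alpha+1}}\bigl(\alpha_{-}I+\alpha_{+}\Sigma_{ij}\bigr),\qquad i\neq j,
\]
where for two ring bodies $d_{ij}=2\sin(|i-j|\zeta/2)$ and $\Sigma_{ij}=\Sigma_{(i+j)\zeta+\pi}$, for a ring body $i$ and the centre $d_{i0}=1$, $m_{0}=\mu$, $\Sigma_{i0}=\Sigma_{2i\zeta}$, and the diagonal blocks are fixed by $A_{ii}=\omega I-\sum_{j\neq i}A_{ij}$ with $\omega=\mu+s_{1}$.

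The equilibrium is invariant under rotating by $\zeta$ and relabelling $j\mapsto j+1$, which in coordinates reads $A_{i+1,j+1}=e^{J\zeta}A_{ij}e^{-J\zeta}$ and $A_{i+1,0}=e^{J\zeta}A_{i0}e^{-J\zeta}$ (ring indices mod $n$). Hence $\tilde A_{r}:=e^{-jJ\zeta}A_{j,j+r}e^{jJ\zeta}$ depends only on $r\bmod n$ and $C:=e^{-jJ\zeta}A_{j0}e^{jJ\zeta}$ is independent of $j$. Substituting $T_{k}(w)$, whose $l$-th coordinate is $n^{-1/2}e^{ilk\zeta}e^{lJ\zeta}w$ for $l\ge1$ and whose central coordinate is $v_{k}\alpha$ (which vanishes unless $k=1,n-1$), into $(A_{ij})$, the $j$-th coordinate of the image equals $n^{-1/2}e^{ijk\zeta}e^{jJ\zeta}\bigl(\sum_{r}e^{irk\zeta}\tilde A_{r}e^{rJ\zeta}\bigr)w$ (plus, when $k=1,n-1$, a $j$-independent term built from $C$ and $v_{k}$), which is precisely the Schur block form. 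Using $A_{jj}=\omega I-A_{j0}-\sum_{l\ge1,\,l\neq j}A_{jl}$ to evaluate the $r=0$ term and $e^{-J\zeta}\Sigma_{(2+r)\zeta}e^{J\zeta}=\Sigma_{r\zeta}$, this gives, for $k\in\{2,\dots,n-2,n\}$,
\[
B_{k}=\omega I-C+\sum_{r=1}^{n-1}\bigl(e^{irk\zeta}\tilde A_{r}e^{rJ\zeta}-\tilde A_{r}\bigr),\qquad \tilde A_{r}=-\frac{\alpha_{-}I-\alpha_{+}\Sigma_{r\zeta}}{(2\sin(r\zeta/2))^{\alpha+1}}.
\]

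The rest is trigonometry. Expanding every $2\times2$ matrix in $\{I,R,iJ,\Sigma_{\pi/2}\}$ with $R=\mathrm{diag}(1,-1)=\Sigma_{0}$, and using $\Sigma_{r\zeta}e^{rJ\zeta}=R$ and $e^{rJ\zeta}=\cos(r\zeta)I-i\sin(r\zeta)(iJ)$, the summand $e^{irk\zeta}\tilde A_{r}e^{rJ\zeta}-\tilde A_{r}$ has, up to the factor $(2\sin(r\zeta/2))^{-(\alpha+1)}$, the $I$-coefficient $-\alpha_{-}(e^{irk\zeta}\cos r\zeta-1)$, the $R$-coefficient $\alpha_{+}(e^{irk\zeta}-\cos r\zeta)$, the $iJ$-coefficient $i\alpha_{-}e^{irk\zeta}\sin r\zeta$, and the $\Sigma_{\pi/2}$-coefficient $-\alpha_{+}\sin r\zeta$. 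Summing over $r$ and pairing $r\leftrightarrow n-r$ kills the $\Sigma_{\pi/2}$-part and makes the remaining three sums real; then $1-\cos\theta=2\sin^{2}(\theta/2)$ and $\cos(rk\zeta)\cos(r\zeta)=\tfrac12(\cos r(k{+}1)\zeta+\cos r(k{-}1)\zeta)$ (with the $\sin\cdot\sin$ analogue) convert them, after the overall factor $2^{-\alpha}$, into $\tfrac12(s_{k+1}+s_{k-1})$, $s_{1}-s_{k}$ and $\tfrac12(s_{k+1}-s_{k-1})$, i.e.\ into $\alpha_{k}$, $-\beta_{k}$, $-\gamma_{k}$ once the $\alpha_{\pm}$ are restored. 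Since $C=-\mu(\alpha_{-}I+\alpha_{+}R)$ and $1+\alpha_{-}=\alpha_{+}$, the term $\omega I-C$ contributes exactly $\alpha_{+}\mu(I+R)+s_{1}I$, and adding the pieces gives $B_{k}=\alpha_{+}\mu(I+R)+(s_{1}+\alpha_{k})I-\beta_{k}R-\gamma_{k}iJ$.

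For $k\in\{1,n-1\}$ the central coordinate $v_{k}\alpha$ is present, so $C$ enters the $w$-rows and there is an extra scalar equation for the image of $\alpha$. Here $v_{1}=2^{-1/2}(1,i)$ is the $(-i)$-eigenvector of $J$ and $v_{n-1}=\bar v_{1}$ the $(+i)$-eigenvector, so the factors $e^{\pm il\zeta}$ cancel the rotations $e^{lJ\zeta}$ acting on $v_{k}$, the sums over $l$ telescope, and $B_{1}$ is obtained by reading off $v_{1}^{*}A_{00}v_{1}$, $n^{1/2}v_{1}^{*}C$, $n^{1/2}Cv_{1}$ and the $k=1$ value of the $2\times2$ block; with $A_{00}=\mu(\mu+s_{1}+n\alpha_{-})I$ (the sum $\sum_{j}\Sigma_{2j\zeta}$ vanishing for $n\ge3$) and $\beta_{1}=0$, $\gamma_{1}=\alpha_{1}$ one gets the displayed matrix, and $B_{n-k}=\bar B_{k}$ follows from $(A_{ij})$ being real and $\overline{T_{k}(w)}=T_{n-k}(\bar w)$. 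The only real work lies in this trigonometric bookkeeping — keeping the channels $I$, $R$, $iJ$ separate while resumming the Dirichlet-type series and matching the $2^{-\alpha}$ in the definition of $s_{k}$ — and in handling $k=1,n-1$ apart because of the central-body coupling; there is no conceptual obstacle beyond the equivariance already established.
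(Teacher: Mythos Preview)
Your proposal is correct and matches the paper exactly: the paper does not prove this proposition at all but simply quotes it from \cite{GaIz11}, Propositions~18 and~19, precisely as you say in your opening sentence. The reconstruction you then sketch --- writing $A_{ij}=-m_im_jd_{ij}^{-(\alpha+1)}(\alpha_{-}I+\alpha_{+}\Sigma_{ij})$, exploiting the conjugation relation $A_{i+1,j+1}=e^{J\zeta}A_{ij}e^{-J\zeta}$ to reduce to a twisted circulant, and resumming in the $\{I,R,iJ\}$ basis via the product-to-sum identities to produce $s_{k\pm1}$ and $s_k$ --- is a faithful outline of that earlier computation, including the separate treatment of $k\in\{1,n-1\}$ through the eigenvectors $v_1,v_{n-1}$ of $iJ$ and the vanishing of $\sum_j\Sigma_{2j\zeta}$ for $n\ge3$.
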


Since $M_{1}(\nu)$ is $\mathbb{\tilde{Z}}_{n}$-equivariant, from Schur's
lemma, we have that%
\[
P^{-1}M_{0}(\nu)P=diag(m_{01}(\nu),...,m_{0n}(\nu))\text{,}%
\]
where the matrices $m_{0k}(\nu)$ must satisfy $M_{0}T_{k}(w)=T_{k}(m_{0k}w)$.

\begin{proposition}
The matrices $m_{0k}(\nu)$ satisfy $m_{0k}(\nu)=\bar{m}_{0(n-k)}(-\nu)$ with%
\[
m_{0k}(\nu)=\nu^{2}I-2\nu\sqrt{\omega}(iJ)+B_{k}\text{ }%
\]
for $k\in\{2,...,n-2,n\}$, and%
\[
m_{01}(\nu)=\nu^{2}diag(\mu,I)-2\nu\sqrt{\omega}diag(\mu,iJ)+B_{1}.
\]

\end{proposition}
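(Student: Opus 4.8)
The plan is to compute explicitly the action of $M_{0}(\nu)$ on the image of each isomorphism $T_{k}$ and to read off $m_{0k}(\nu)$ from the defining relation $M_{0}(\nu)T_{k}(w)=T_{k}(m_{0k}(\nu)w)$ (the block decomposition $P^{-1}M_{0}(\nu)P=diag(m_{01},\dots ,m_{0n})$ itself being the Schur-lemma statement already recorded before the proposition). Write $M_{0}(\nu)=\nu^{2}\mathcal{M}_{2}-2\nu\sqrt{\omega}(i\mathcal{J})\mathcal{M}_{2}+(A_{ij})$ as in Proposition \ref{E3.1.0} (in the polygonal case with $n+1$ bodies, $\mathcal{M}_{2}=diag(\mu I,I,\dots ,I)$ and $\mathcal{J}=diag(J,\dots ,J)$), and treat the three summands separately. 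The $\nu$-independent summand is already done: by the definition of the $B_{k}$ recalled just before Proposition \ref{E3.2.0} one has $(A_{ij})T_{k}(w)=T_{k}(B_{k}w)$, with the $B_{k}$ as quoted there from \cite{GaIz11}, so nothing new is needed for that term.

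For the inertia term, the point is simply that $T_{k}$ places the free vector $w\in\mathbb{C}^{2}$ (up to the rotations $n^{-1/2}e^{j(ikI+J)\zeta}$) on the $n$ ring coordinates, all of which carry mass $1$ in $\mathcal{M}_{2}$, while the central coordinate is either $0$ (when $k\in\{2,\dots ,n-2,n\}$) or $v_{k}\alpha$ with central mass $\mu$ (when $k\in\{1,n-1\}$). Hence $\nu^{2}\mathcal{M}_{2}T_{k}(w)=T_{k}(\nu^{2}w)$ in the first case and $\nu^{2}\mathcal{M}_{2}T_{k}(\alpha ,w)=T_{k}(\nu^{2}\,diag(\mu ,I)(\alpha ,w))$ in the second, producing the $\nu^{2}I$ (resp.\ $\nu^{2}diag(\mu ,I)$) term. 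For the gyroscopic term $-2\nu\sqrt{\omega}(i\mathcal{J})\mathcal{M}_{2}$ the key remark is that on a ring coordinate $iJ$ commutes with $e^{j(ikI+J)\zeta}=e^{ijk\zeta}e^{jJ\zeta}$, since $e^{ijk\zeta}$ is a scalar and $J$ commutes with the rotation $e^{jJ\zeta}$; thus this term acts on the ring block as $-2\nu\sqrt{\omega}(iJ)$. On the central coordinate (for $k\in\{1,n-1\}$) one uses the eigenrelations $Jv_{1}=-iv_{1}$ and $Jv_{n-1}=iv_{n-1}$, so that $iJv_{1}=v_{1}$ and the contribution to the $\alpha$-block is the scalar $-2\nu\sqrt{\omega}\mu$, which is exactly the central entry of $-2\nu\sqrt{\omega}\,diag(\mu ,iJ)$ for $k=1$. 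Summing the three summands gives the stated expressions for $m_{0k}(\nu)$.

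Finally, the symmetry relation $m_{0k}(\nu)=\bar{m}_{0(n-k)}(-\nu)$ is purely formal: since $\nu ,\omega ,\mu$ are real and $I,J$ have real entries, conjugating $m_{0(n-k)}(-\nu)=\nu^{2}I+2\nu\sqrt{\omega}(iJ)+B_{n-k}$ gives $\nu^{2}I-2\nu\sqrt{\omega}(iJ)+\bar{B}_{n-k}$, which equals $m_{0k}(\nu)$ precisely because $B_{n-k}=\bar{B}_{k}$ (stated in Proposition \ref{E3.2.0}); the $k\in\{1,n-1\}$ case is the same, using $\overline{diag(\mu ,iJ)}=diag(\mu ,-iJ)$ and $B_{n-1}=\bar{B}_{1}$.

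I do not expect a genuine obstacle here: the argument is a short direct computation once the $W_{k}$ decomposition and the blocks $B_{k}$ are in hand. The only place needing care is the bookkeeping for the central coordinate in the blocks $k=1$ and $k=n-1$ — one must keep track of the sign produced by $Jv_{k}=\mp iv_{k}$ and check that it is consistent with the matrix $B_{1}$ recalled from \cite{GaIz11}, and, correspondingly, that the $k=n-1$ block obtained by conjugation agrees with what a direct computation would give.
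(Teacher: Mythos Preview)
Your proof is correct and follows essentially the same approach as the paper: you compute the action of each of the three summands of $M_{0}(\nu)$ on $T_{k}$, using that the mass block acts trivially on ring coordinates and as $\mu$ on the central one, that $i\mathcal{J}$ acts as $iJ$ on ring coordinates and via the eigenrelation $(iJ)v_{1}=v_{1}$ (resp.\ $(iJ)v_{n-1}=-v_{n-1}$) on the central one, and that the Hessian term gives $B_{k}$ by definition; the symmetry $m_{0k}(\nu)=\bar{m}_{0(n-k)}(-\nu)$ is then read off from $B_{n-k}=\bar{B}_{k}$. The paper does the $k=n-1$ case by a direct computation yielding $m_{0(n-1)}(\nu)=\nu^{2}diag(\mu,I)-2\nu\sqrt{\omega}\,diag(-\mu,iJ)+B_{n-1}$, which is precisely the bookkeeping check you flagged in your last paragraph.
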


\begin{proof}
The matrix $\mathcal{M}_{2}$ is $diag(\mu,\mu,1,...,1)$. For $k\in
\{2,...,n-2,n\}$, the matrix $\mathcal{M}_{2}$\ satisfies $\mathcal{M}%
_{2}T_{k}(w)=T_{k}(w)$. Since $(i\mathcal{J})T_{k}(w)=T_{k}(iJw)$, then%
\[
M_{0}T_{k}(w)=\left(  \nu^{2}\mathcal{M}_{2}-2\nu\sqrt{\omega}\mathcal{M}%
_{2}(i\mathcal{J)}+(A_{ij})_{ij=1}^{n}\right)  T_{k}(w)=T_{k}(m_{0k}w)\text{.}%
\]

For $k=1$, the matrix $\mathcal{M}_{2}$ satisfies $\mathcal{M}_{2}%
T_{1}(w)=T_{1}(diag(\mu,1,1)w)$. Since $(iJ)v_{1}=v_{1}$, then $(i\mathcal{J}%
)T_{1}(w)=T_{1}(diag(1,iJ)w)$. Therefore,
\[
M_{0}T_{1}(w)=\left(  \nu^{2}\mathcal{M}_{2}-2\nu\sqrt{\omega}\mathcal{M}%
_{2}(i\mathcal{J)}+(A_{ij})_{ij=1}^{n}\right)  T_{1}(w)=T_{1}(m_{01}w).
\]
For $k=n-1$, using $(iJ)v_{2}=-v_{2}$ and a similar argument, we may prove
that%
\[
m_{0(n-1)}(\nu)=\nu^{2}diag(\mu,I)-2\nu\sqrt{\omega}diag(-\mu,iJ)+B_{n-1}%
\text{.}%
\]
The equalities $m_{0(n-k)}(\nu)=\bar{m}_{0k}(-\nu)$ follow from $B_{n-k}%
=\bar{B}_{k}$.
\end{proof}

\subsubsection{Spatial representation}

We analyze now the decomposition of $V_{1}$.

\begin{definition}
For $k\in\{1,...,n-1\}$, we define $T_{k}:\mathbb{C}\rightarrow W_{k}$ as%
\begin{align*}
T_{k}(w)  &  =(0,n^{-1/2}e^{ik\zeta}w,...,n^{-1/2}e^{nik\zeta}w)\text{ with
}\\
W_{k}  &  =\{(0,e^{ik\zeta}w,...,e^{nik\zeta}w):w\in\mathbb{C}\}\text{.}%
\end{align*}
And, for $k=n$, we define $T_{n}:\mathbb{C}^{2}\rightarrow W_{n}$ as%
\begin{align*}
T_{n}(\alpha,w)  &  =(\alpha,n^{-1/2}w,...,n^{-1/2}w)\text{ with}\\
W_{n}  &  =\{(\alpha,w,...,w):\alpha,w\in\mathbb{C}\}.
\end{align*}

\end{definition}

The action of $(\zeta,\zeta)$ in the subspace $W_{k}$ is given by
\[
(\zeta,\zeta)T_{k}(w)=T_{k}(e^{ik\zeta}w).
\]
Consequently, the decomposition in irreducible representations of the space
$\mathbb{C}^{n+1}\simeq V_{1}$, for the group $\mathbb{Z}_{n}(\zeta,\zeta)$,
are the subspaces $W_{k}$. The actions of the elements $\kappa$, $(\zeta
,\zeta)$ and $\varphi\in S^{1}$ in $W_{k}$ are%
\[
\rho(\kappa)=-1,\rho(\zeta,\zeta)=e^{ik\zeta}\text{ and }\rho(\varphi
)=e^{il\varphi}.
\]
Since the elements $(\zeta,\zeta,-k\zeta)\in\mathbb{Z}_{n}(\zeta,\zeta)\times
S^{1}$ and $(\kappa,\pi)\in\mathbb{Z}_{2}(\kappa)\times S^{1}$ act trivially
on $W_{k}$, then the isotropy group of $W_{k}$ is generated by $(\zeta
,\zeta,-k\zeta)\ $and $(\kappa,\pi)$,%
\begin{equation}
\mathbb{Z}_{n}\left(  \zeta,\zeta,-k\zeta\right)  \times\mathbb{Z}_{2}%
(\kappa,\pi).
\end{equation}

Since the spaces $W_{k}$ are orthogonal to each other, then the linear map
\[
Pw=\sum_{k=1}^{n}T_{k}(w_{k})
\]
is orthogonal, where $w=(w_{1},...,w_{n})$, with $w_{n}\in\mathbb{C}^{2}$ and
$w_{k}\in\mathbb{C}^{1}$ for the remaining $k$'s.

Since $M_{1}(\nu)$ is $\mathbb{\tilde{Z}}_{n}$-equivariant, by Schur's lemma,
the matrix $M_{1}(\nu)$ must satisfy%
\[
P^{-1}M_{1}(\nu)P=diag(m_{11}(\nu),...,m_{1n}(\nu))\text{,}%
\]
where the blocks $m_{1k}(\nu)$ are such that $M_{1}T_{k}(w)=T_{k}(m_{1k}w)$
for $k\in\{1,...,n\}$.

\begin{proposition}
\label{E3.2.1}We have $m_{1k}(\nu)=\nu^{2}-(\mu+s_{k})$ for $k\in
\{1,...,n-1\}$, and%
\[
m_{1n}(\nu)=\left(
\begin{array}
[c]{cc}%
\mu(\nu^{2}-n) & \sqrt{n}\mu\\
\sqrt{n}\mu & \nu^{2}-\mu
\end{array}
\right)  .
\]

\end{proposition}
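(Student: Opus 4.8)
The plan is to compute the action of $M_1(\nu)=\nu^2\mathcal{M}_1+(a_{ij})_{ij=0}^{n}$ directly on each generator $T_k(w)$, using the explicit form of the polygonal configuration $a_0=0$, $a_j=e^{ij\zeta}$. Here $\mathcal{M}_1=\mathrm{diag}(\mu,1,\dots,1)$, and from Proposition \ref{E1.1.1} one reads off $a_{0j}=a_{j0}=\mu$ for $j\ge 1$, $a_{ij}=\bigl(2\sin(|i-j|\zeta/2)\bigr)^{-(\alpha+1)}$ for $i,j\ge 1$ with $i\neq j$, and the diagonal entries are fixed by the row--sum relations $a_{mm}=-\sum_{j\neq m}a_{mj}$. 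The key structural point I would exploit is that this row--sum relation makes $(a_{ij})$ a weighted Laplacian of the complete graph on $n+1$ vertices, so that the Fourier vectors $(e^{ijk\zeta})_j$ on the ring are quasi-eigenvectors, and the only coupling to the central body happens through the mode $\sum_{j=1}^{n}e^{ijk\zeta}$, which vanishes precisely for $k\in\{1,\dots,n-1\}$.

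For $k\in\{1,\dots,n-1\}$, the vector $T_k(w)$ has zero central component, so $\mathcal{M}_1$ fixes it and $\nu^2\mathcal{M}_1T_k(w)=T_k(\nu^2 w)$. For the $m$-th ring component of $(a_{ij})T_k(w)$ I would insert $a_{mm}=-\mu-\sum_{j\ge1,\,j\neq m}a_{mj}$ and collect terms to obtain $n^{-1/2}w\,e^{imk\zeta}\bigl(-\mu+\Sigma_m\bigr)$ with $\Sigma_m=\sum_{j\ge1,\,j\neq m}a_{mj}(e^{i(j-m)k\zeta}-1)$, and then reindex by $l=j-m \bmod n$, which turns $\Sigma_m$ into
\[
\Sigma_m=\frac{1}{2^{\alpha+1}}\sum_{l=1}^{n-1}\frac{e^{ilk\zeta}-1}{\sin^{\alpha+1}(l\zeta/2)}.
\]
Its imaginary part vanishes by the substitution $l\mapsto n-l$, which fixes $\sin(l\zeta/2)$ but reverses the sign of $\sin(lk\zeta)$, while its real part is
\[
\frac{1}{2^{\alpha+1}}\sum_{l=1}^{n-1}\frac{\cos(lk\zeta)-1}{\sin^{\alpha+1}(l\zeta/2)}=-\frac{1}{2^{\alpha}}\sum_{l=1}^{n-1}\frac{\sin^2(lk\zeta/2)}{\sin^{\alpha+1}(l\zeta/2)}=-s_k.
\]
Hence $\Sigma_m=-s_k$, the $m$-th component equals $-(\mu+s_k)\,n^{-1/2}w\,e^{imk\zeta}$, so $(a_{ij})T_k(w)=T_k(-(\mu+s_k)w)$ and therefore $m_{1k}(\nu)=\nu^2-(\mu+s_k)$.

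For $k=n$, the generator $T_n(\alpha,w)$ has central component $\alpha$ and all ring components equal to $n^{-1/2}w$. Using $a_{00}=-n\mu$ and $a_{m0}=\mu$, the central component of $(a_{ij})T_n(\alpha,w)$ is $-n\mu\alpha+\sqrt{n}\,\mu\,w$; using $a_{m0}=\mu$ together with $\sum_{j\ge1,\,j\neq m}a_{mj}+a_{mm}=-\mu$, each ring component equals $\mu\alpha-\mu n^{-1/2}w$. Thus, in the coordinates $(\alpha,w)$, $(a_{ij})$ acts by $\left(\begin{smallmatrix}-n\mu & \sqrt{n}\,\mu\\ \sqrt{n}\,\mu & -\mu\end{smallmatrix}\right)$, while $\nu^2\mathcal{M}_1$ acts by $\mathrm{diag}(\nu^2\mu,\nu^2)$ since $\mathcal{M}_1T_n(\alpha,w)=T_n(\mu\alpha,w)$; adding the two gives the stated $m_{1n}(\nu)$. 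The only genuine computation is the trigonometric identity in the middle step, and this is the same cancellation, driven by the antisymmetry $l\mapsto n-l$, already used for the planar blocks $B_k$ in \cite{GaIz11}; I expect it to be the only place requiring care, everything else being bookkeeping with the Laplacian structure.
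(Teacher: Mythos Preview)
Your proof is correct and follows essentially the same route as the paper's own argument: both use the row--sum relation $a_{mm}=-\sum_{j\neq m}a_{mj}$, reindex by $l=j-m\bmod n$ to obtain the trigonometric sum $\sum_{l}(e^{ilk\zeta}-1)/(2\sin(l\zeta/2))^{\alpha+1}$, and identify it with $-s_k$; the case $k=n$ is handled by the same direct computation of the central and ring components. Your write-up is in fact slightly more explicit than the paper's, in that you spell out why the central component of $(a_{ij})T_k(w)$ vanishes for $k<n$ and why the imaginary part of the sum cancels under $l\mapsto n-l$, points the paper leaves implicit.
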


\begin{proof}
We denote the coordinate $w_{i}\in\mathbb{R}^{2}$ of the vector $w=(w_{0}%
,...,w_{n})$ by $[w]_{i}$. For $k\in\{1,...,n-1\}$, if $l\neq0$, from the
definition%
\[
\lbrack(a_{ij})T_{k}(w)]_{l}=n^{-1/2}\sum_{j=1}^{n}a_{lj}e^{ijk\zeta}w\text{.}%
\]
Since $a_{lj}=m_{l}m_{j}/d_{lj}$, with $d_{lj}^{2}=4\sin^{2}((l-j)\zeta/2)$
for $l,j\in\{1,...,n\}$, then $a_{lj}=a_{n(j-l)}$, with $(j-l)\in\{1,...,n\}$,
modulus $n$. From the equality $a_{lj}e^{ijk\zeta}=e^{ilk\zeta}\left(
a_{n(j-l)}e^{i(j-l)k\zeta}\right)  $, we have that%
\[
\lbrack(a_{ij})T_{k}(w)]_{l}=n^{-1/2}e^{ilk\zeta}\left(  \sum_{j=1}^{n}%
a_{nj}e^{ijk\zeta}\right)  w=\left[  T_{k}(b_{k}w)\right]  _{l}\text{,}%
\]
where $b_{k}$ is the sum between parentheses.

In order to calculate $b_{k}$, notice that $a_{nn}=-\sum_{j=0}^{n-1}a_{nj}$
and $a_{n0}=\mu$, then
\[
b_{k}=-\mu+\sum_{j=1}^{n-1}\left(  e^{ijk\zeta}-1\right)  a_{nj}.
\]
Since $a_{nj}=1/d_{nj}^{\alpha+1}$, then%
\[
\sum_{j=1}^{n-1}\left(  e^{ijk\zeta}-1\right)  a_{nj}=-\sum_{j=1}^{n-1}%
\frac{2\sin^{2}(kj\zeta/2)}{2^{\alpha+1}\sin^{\alpha+1}(j\zeta/2)}%
=-s_{k}\text{.}%
\]
Thus, we have $(a_{ij})T_{k}(w)=T_{k}(b_{k}w)$, with $b_{k}=-(\mu+s_{k})$.

If $l\neq0$, then,
\[
\lbrack(a_{ij})T_{n}(\alpha,w)]_{l}=a_{l0}\alpha+n^{-1/2}\sum_{j=1}^{n}%
a_{lj}w\text{,}%
\]
with $\alpha,w\in\mathbb{C}$. Using the equalities $a_{l0}=\mu$ and
$\sum_{j=1}^{n}a_{lj}=-a_{l0}$, we have
\begin{equation}
\lbrack(a_{ij})T_{n}(\alpha,w)]_{l}=n^{-1/2}(\sqrt{n}\mu\alpha-\mu w).
\label{D3.2.1}%
\end{equation}

If $l=0$, then
\[
\lbrack(a_{ij})T_{n}(\alpha,w)]_{0}=a_{00}\alpha+n^{-1/2}\sum_{j=1}^{n}%
a_{0j}w.
\]
Using the equality $a_{00}=-n\mu$, we have%
\begin{equation}
\lbrack(a_{ij})T_{n}(\alpha,w)]_{0}=-n\mu\alpha+\sqrt{n}\mu w. \label{D3.2.2}%
\end{equation}
From the equalities (\ref{D3.2.1}) and (\ref{D3.2.2}), we conclude that%
\[
(a_{ij})T_{n}(\alpha,w)=T_{n}\left(  \left(
\begin{array}
[c]{cc}%
-n\mu & \sqrt{n}\mu\\
\sqrt{n}\mu & -\mu
\end{array}
\right)  \left(
\begin{array}
[c]{c}%
\alpha\\
w
\end{array}
\right)  \right)  .
\]

Here the matrix $\mathcal{M}_{1}$ is $diag(\mu,1,...,1)$. Then, the statements
of the proposition follow from the fact that $\mathcal{M}_{1}T_{k}%
(w)=T_{k}(w)$ for $k\in\{1,...,n-1\}$, and $\mathcal{M}_{1}T_{n}%
(\alpha,w)=T_{n}(\mu\alpha,w)$.
\end{proof}

Since the block $M_{1}(0)$ is restricted to the space orthogonal to
$(1,...,1)$, and since $T_{n}(1,\sqrt{n})=(1,...,1)$, then the block
$m_{1n}(0)$ must be restricted to the subspace orthogonal to $(1,\sqrt{n})$.

As we expected, the eigenvalues of $m_{1n}(0)$ are $0$ and $-(n+1)\mu$ with
eigenvectors $(1,\sqrt{n})$ and $(\sqrt{n},-1)$ respectively. Therefore, the
matrix $m_{1n}(0)$, on the subspace orthogonal to $(1,\sqrt{n})$, is
equivalent to%
\[
m_{1n}(0)^{\perp}=-(n+1)\mu.
\]
The map $m_{1n}(0)^{\perp}$ is invertible for $\mu\neq0$. For $\mu=0$ the
central body has zero mass, and, in this case, the coordinate corresponding to
$m_{1n}(0)^{\perp}$ may be taken away.

\section{Symmetries}

Before we show the existence of bifurcation points, we wish to describe the
symmetries of the solutions. That is, solutions with isotropy group
$\mathbb{Z}_{n}\left(  \zeta,\zeta,-k\zeta\right)  \times\mathbb{Z}_{2}%
(\kappa)$ for the blocks $m_{0k}(\nu)$ and $\mathbb{Z}_{n}\left(  \zeta
,\zeta,-k\zeta\right)  \times\mathbb{Z}_{2}(\kappa,\pi)$ for the blocks
$m_{1k}(\nu)$.

The coordinates $(x_{j},y_{j},z_{j})$, of the body $j\in\{0,...,n\}$, will
denote the components of the solution $x(t)$.

\subsection{Planar solutions}

In this part we describe the symmetries of the isotropy group $\mathbb{Z}%
_{n}\left(  \zeta,\zeta,-k\zeta\right)  \times\mathbb{Z}_{2}(\kappa).$. As we
have seen, due to the group $\mathbb{Z}_{2}$, these solutions must be planar.
We shall identify the real and complex planes by $u_{j}=x_{j}+iy_{j}$.

Now, since $(\zeta,\zeta,-k\zeta)$ generate $\mathbb{Z}_{n}$, the solutions
with isotropy group $\mathbb{Z}_{n}$ must satisfy
\[
u_{j}(t)=e^{-i\zeta}u_{\zeta(j)}(t-k\zeta).
\]

\begin{remark}
Notice that, if $u_{j}(t)$ is a solution, for $\nu$, with symmetry
$\mathbb{Z}_{n}\left(  \zeta,\zeta,-k\zeta\right)  $, then $u_{j}(-t)$ is a
solution, for $-\nu$, with symmetry $\mathbb{Z}_{n}\left(  \zeta,\zeta
,k\zeta\right)  $. In fact, the bifurcation phenomena are related by the fact
that $m_{0(n-k)}(\nu)=\bar{m}_{0k}(-\nu)$.
\end{remark}

In order to describe the symmetries of the group $\mathbb{Z}_{n}\left(
\zeta,\zeta,-k\zeta\right)  $ we need the following definition.

\begin{definition}
For each fixed $k$, let $h$ be the maximum common divisor of $n$ and $k$. We
define%
\[
\bar{n}=n/h\text{ and }\bar{k}=k/h.
\]

\end{definition}

For the central body $u_{0}$ one has the following symmetries.

\begin{proposition}
If $h>1$, the central body remains at the center $u_{0}(t)=0$. If $h=1$, the
central body satisfies
\[
u_{0}(t+\zeta)=e^{-ik^{\prime}\zeta}u_{0}(t)\text{,}%
\]
were $k^{\prime}$ is such that $k^{\prime}k=1$, modulus $n$.
\end{proposition}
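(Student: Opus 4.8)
The plan is to reduce the whole statement to one functional equation for the central coordinate $u_{0}$ and then iterate it. The isotropy group $\mathbb{\tilde{Z}}_{n}(k)$ is generated by $(\zeta,\zeta,-k\zeta)$; on the central body the action of $(\zeta,\zeta)\in\mathbb{\tilde{Z}}_{n}$ is just the planar rotation $x_{0}\mapsto e^{-J\zeta}x_{0}$ (no permutation enters for $j=0$), while the $S^{1}$-component acts by the time shift $-k\zeta$. Writing $u_{0}=x_{0}+iy_{0}$, invariance of a solution under this generator is therefore the $j=0$ analogue of the relation $u_{j}(t)=e^{-i\zeta}u_{\zeta(j)}(t-k\zeta)$ displayed before the Definition, namely
\[
u_{0}(t)=e^{-i\zeta}\,u_{0}(t-k\zeta).
\]
I would record this first, together with the $2\pi$-periodicity of $u_{0}$; these two facts are all that the proof uses.

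For the case $h>1$ I would iterate the relation: an immediate induction gives $u_{0}(t)=e^{-im\zeta}u_{0}(t-mk\zeta)$ for every integer $m\geq 0$. Taking $m=\bar{n}=n/h$, one has $\bar{n}\zeta=2\pi/h$ and $\bar{n}k=n\bar{k}$, so $\bar{n}k\zeta=2\pi\bar{k}$ is an integer multiple of the period; by $2\pi$-periodicity the right-hand side collapses and one is left with $u_{0}(t)=e^{-2\pi i/h}u_{0}(t)$. Since $h>1$ the scalar $e^{-2\pi i/h}$ differs from $1$, which forces $u_{0}\equiv 0$.

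For the case $h=1$ I would use that $k$ is invertible modulo $n$, so some $k^{\prime}$ satisfies $k^{\prime}k=1+mn$ with $m\in\mathbb{Z}$, $m\geq 0$. Iterating the functional equation $k^{\prime}$ times gives $u_{0}(t)=e^{-ik^{\prime}\zeta}u_{0}(t-k^{\prime}k\zeta)$, and since $k^{\prime}k\zeta=\zeta+2\pi m$, periodicity turns this into $u_{0}(t)=e^{-ik^{\prime}\zeta}u_{0}(t-\zeta)$; replacing $t$ by $t+\zeta$ yields exactly $u_{0}(t+\zeta)=e^{-ik^{\prime}\zeta}u_{0}(t)$.

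The computational content is minimal once the functional equation is in hand, so the one place to be careful — and what I expect to be the only real obstacle — is the derivation of that equation from the action of $\mathbb{\tilde{Z}}_{n}$: one must check that for the central index $j=0$ there is genuinely no permutation contribution (only the rotation $e^{-J\zeta}$ and the shift by $-k\zeta$), and keep the sign conventions straight so that in complex form the coefficient is $e^{-i\zeta}$ rather than its conjugate or reciprocal. I would also stress in the write-up that the claim concerns the full solution $u_{0}(t)$ on the isotropy subspace, with all its Fourier harmonics, not merely the fundamental mode appearing in the block $W_{k}$; the iteration argument operates directly at the level of the function, so it needs no appeal to the block decomposition. (Alternatively one can expand $u_{0}$ in Fourier series and observe that the relation annihilates every coefficient of $e^{ilt}$ except those with $lk\equiv -1\ (\mathrm{mod}\ n)$, recovering both cases simultaneously; the iteration argument is shorter.)
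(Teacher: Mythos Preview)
Your argument is correct and is essentially the same as the paper's: the paper also records the relation $u_{0}(t)=e^{-il\zeta}u_{0}(t-kl\zeta)$ (which is exactly your iterate), then chooses $l=\bar{n}$ when $h>1$ and $l=k'$ when $h=1$ to reach the two conclusions. Your write-up is a bit more careful in justifying the basic functional equation and in invoking $2\pi$-periodicity, but the method coincides with the paper's.
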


\begin{proof}
Since $\zeta(0)=0$, the central body has the symmetry%
\[
u_{0}(t)=e^{-i(l\zeta)}u_{0}(t-k(l\zeta)).
\]
For $h>1$ take $l=\bar{n}$. Since $k(\bar{n}\zeta)=2\pi\bar{k}$ and $\bar
{n}\zeta=2\pi/h$, then the central body satisfies%
\[
u_{0}(t)=e^{-i(\bar{n}\zeta)}u_{0}(t-k(\bar{n}\zeta))=e^{-i(2\pi/h)}%
u_{0}(t)=0\text{.}%
\]
For $h=1$ take $l=k^{\prime}$, then the central body satisfies%
\[
u_{0}(t)=e^{-i(k^{\prime}\zeta)}u_{0}(t-k(k^{\prime}\zeta))=e^{-i(k^{\prime
}\zeta)}u_{0}(t-\zeta)\text{.}%
\]

\end{proof}

In order to describe the symmetries of the $n$ bodies with equal masses, we
use the notation $u_{j}=u_{j+kn}$, for $j\in\{1,...,n\}$. Then, $\zeta
(j)=j+1$, and the $n$ bodies satisfy%
\[
u_{j+1}(t)=e^{\ ij\zeta}u_{1}(t+jk\zeta).
\]
Thus, each one of the $n$ bodies follows the same planar curve, but with a
different phase and with some rotation in the $(x,y)$-plane.

Now, let us show some examples of these symmetries.

For $k=n$, the symmetries are%
\[
u_{0}(t)=0\text{ and }u_{j+1}(t)=e^{\ ij\zeta}u_{1}(t)\text{.}%
\]
Thus, the central body remains at the center, and the other $n$ bodies form a
$n$-polygon at any time: see the figure for $n=3$.

\begin{figure}[h]
\centering
\subfloat[Symmetries of $\mathbb{\tilde{Z}}_{n}(1)$.] {\
\begin{pspicture}(-2.5,-2.5)(2.5,2.5)\SpecialCoor
\psdots[dotstyle=o](0,0)(2;0)(2;120)(2;240)
\psellipticarc[linestyle=dashed](2;0)(.5,.3){120}{0}
\psellipticarc{*->}(2;0)(.5,.3){0}{120}
\rput{120}{
\psellipticarc[linestyle=dashed](2;0)(.5,.3){240}{120}
\psellipticarc{*->}(2;0)(.5,.3){120}{240}}
\rput{240}{
\psellipticarc[linestyle=dashed](2;0)(.5,.3){0}{240}
\psellipticarc{*->}(2;0)(.5,.3){240}{0}}
\psline[linestyle=dashed](.5;0)(.5;120)(.5;-120)
\psline{*->}(.5;0)(.5;-120)
\end{pspicture}
}
\subfloat[Symmetries of $\mathbb{\tilde{Z}}_{n}(3)$.] {
\begin{pspicture}(-2.5,-2.5)(2.5,2.5)\SpecialCoor
\psdots[dotstyle=o](2;0)(2;120)(2;240)
\psellipticarc[linestyle=dashed](2;0)(.5,.3){120}{0}
\psellipticarc{*->}(2;0)(.5,.3){0}{120}
\rput{120}{
\psellipticarc[linestyle=dashed](2;0)(.5,.3){120}{0}
\psellipticarc{*->}(2;0)(.5,.3){0}{120}}
\rput{240}{
\psellipticarc[linestyle=dashed](2;0)(.5,.3){120}{0}
\psellipticarc{*->}(2;0)(.5,.3){0}{120}}
\psdots(0,0)
\NormalCoor\end{pspicture}
}\caption{For $n=3$.}%
\end{figure}

For $k=1$, the symmetries are%
\[
u_{0}(t+\zeta)=e^{-i\zeta}u_{0}(t)\text{ and }u_{j+1}(t)=e^{\ ij\zeta}%
u_{1}(t+j\zeta).
\]
Therefore, the central body is determined by the time interval $[0,\zeta)$,
and the rest of its orbit is given by rotations. The other $n$ bodies follow
the orbit of one of them, but with a synchronization between the change of
phase and the $(x,y)$-rotation: see the figure for $n=3$. \begin{figure}[h]
\centering
\subfloat[Symmetries of $\mathbb{\tilde{Z}}_{n}(1)$.] {
\begin{pspicture}(-2.5,-2.5)(2.5,2.5)\SpecialCoor
\psdots[dotstyle=o](2;0)(2;72)(2;146)(2;219)(2;292)(0,0)
\psline[linestyle=dashed](.5;0)(.5;72)(.5;146)(.5;219)(.5;292)
\psline{*->}(.5;0)(.5;292)
\psellipticarc[linestyle=dashed](2;0)(.5,.3){0}{292}
\psellipticarc{*->}(2;0)(.5,.3){292}{0}
\rput{72}{
\psellipticarc[linestyle=dashed](2;0)(.5,.3){72}{0}
\psellipticarc{*->}(2;0)(.5,.3){0}{72}}
\rput{146}{
\psellipticarc[linestyle=dashed](2;0)(.5,.3){146}{72}
\psellipticarc{*->}(2;0)(.5,.3){72}{146}}
\rput{219}{
\psellipticarc[linestyle=dashed](2;0)(.5,.3){219}{146}
\psellipticarc{*->}(2;0)(.5,.3){146}{219}}
\rput{292}{
\psellipticarc[linestyle=dashed](2;0)(.5,.3){292}{219}
\psellipticarc{*->}(2;0)(.5,.3){219}{292}}
\NormalCoor\end{pspicture}
} \qquad{} \subfloat[Symmetries of $\mathbb{\tilde{Z}}_{n}(2)$.] {
\begin{pspicture}(-2.5,-2.5)(2.5,2.5)\SpecialCoor
\psdots[dotstyle=o](2;0)(2;72)(2;146)(2;219)(2;292)(0,0)
\psline{*->}(.5;0)(.5;146)
\psline(.5;146)(.5;292)(.5;72)(.5;219)(.5;0)
\psellipticarc[linestyle=dashed](2;0)(.5,.3){0}{292}
\psellipticarc{*->}(2;0)(.5,.3){292}{0}
\rput{72}{
\psellipticarc[linestyle=dashed](2;0)(.5,.3){146}{72}
\psellipticarc{*->}(2;0)(.5,.3){72}{146}}
\rput{146}{
\psellipticarc[linestyle=dashed](2;0)(.5,.3){292}{219}
\psellipticarc{*->}(2;0)(.5,.3){219}{292}}
\rput{219}{
\psellipticarc[linestyle=dashed](2;0)(.5,.3){72}{0}
\psellipticarc{*->}(2;0)(.5,.3){0}{72}}
\rput{292}{
\psellipticarc[linestyle=dashed](2;0)(.5,.3){219}{146}
\psellipticarc{*->}(2;0)(.5,.3){146}{219}}
\NormalCoor\end{pspicture}
}\caption{For $n=5$.}%
\end{figure}

For a general $k$, with $h=1$, the symmetries are%
\[
u_{0}(t+\zeta)=e^{-ik^{\prime}\zeta}u_{0}(t)\text{ and }u_{j+1}%
(t)=e^{\ ij\zeta}u_{1}(t+j(k\zeta)).
\]
These symmetries are similar to the ones for $k=1$, but now there is a
permutation between the phase and the planar rotation. For instance, one may
compare the cases $k=1$ and $k=2$ for $n=5$.

The effect of the group for $k=1$ was already described. In order to show the
symmetries of the group for $k=2$, notice that $k^{\prime}=3$, then the
central body visits the points: $u_{0}(\zeta)=e^{-3i\zeta}u_{0}$,
$u_{0}(2\zeta)=e^{-i\zeta}u_{0}$, $u_{0}(3\zeta)=e^{-4i\zeta}u_{0}$ and
$x_{0}(4\zeta)=e^{-2i\zeta}u_{0}$. The other $n$ bodies have orbits:
$u_{1}(t)$, $e^{\ i\zeta}u_{1}(t+2\zeta)$, $e^{\ i2\zeta}u_{1}(t+4\zeta)$,
$e^{\ i3\zeta}u_{1}(t+\zeta)$ and $e^{\ i4\zeta}u_{1}(t+3\zeta)$.

\begin{figure}[th]
\centering
\begin{pspicture}(-2.5,-2.5)(2.5,2.5)\SpecialCoor
\psdots[dotstyle=o](2;0)(2;90)(2;180)(2;270)
\psdots(0,0)
\psellipticarc[linestyle=dashed](2;0)(.5,.3){180}{0}
\psellipticarc{*->}(2;0)(.5,.3){0}{180}
\rput{90}{
\psellipticarc[linestyle=dashed](2;0)(.5,.3){0}{180}
\psellipticarc{*->}(2;0)(.5,.3){180}{0}}
\rput{180}{
\psellipticarc[linestyle=dashed](2;0)(.5,.3){180}{0}
\psellipticarc{*->}(2;0)(.5,.3){0}{180}}
\rput{270}{
\psellipticarc[linestyle=dashed](2;0)(.5,.3){0}{180}
\psellipticarc{*->}(2;0)(.5,.3){180}{0}}
\NormalCoor\end{pspicture}
\caption{Symmetries of $k=2$ for $n=4$.}%
\end{figure}
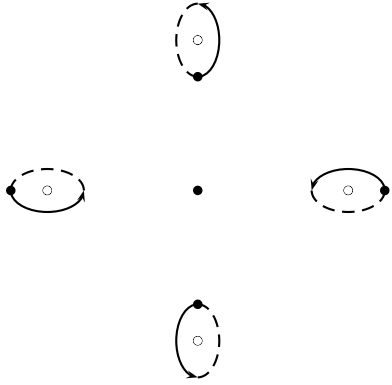

For the group with $k$ such that $h>1$, the symmetries are%
\[
u_{0}(t)=0\text{ and }u_{j+1}(t)=e^{\ ij\zeta}u_{1}(t+j\bar{k}(2\pi/\bar
{n})).
\]
In this case the central body remains at the center. Moreover, the other $n$
bodies follow the same curve with a different phase and the planar rotation
determined by multiplying by $\bar{k}$ in $\mathbb{Z}_{\bar{n}}=\{1,...,\bar
{n}\}$. See the example $n=4$ and $k=2$ .

\begin{remark}
\label{Cor}In fixed coordinates, the solutions are $q_{j}(t)=e^{i\sqrt
{\mathcal{\omega}}t}u_{j}(\nu t)$. Reparametrizing the time, we have that
$q_{j}(t)=e^{i\sqrt{\mathcal{\omega}}t/\nu}u_{j}(t)$, where $u_{j}$ is a
$2\pi$-periodic function. Set $\Omega=1-k\sqrt{\omega}/\nu$, then, for
$j\in\{1,...,n\}$, we have that%
\begin{align*}
q_{j+1}(t)  &  =e^{it\sqrt{\omega}/\nu}u_{j+1}(t)\\
&  =e^{\ ij\zeta}e^{it\sqrt{\omega}/\nu}u_{1}(t+jk\zeta)=e^{\ ij\zeta\Omega
}q_{1}(t+jk\zeta)\text{.}%
\end{align*}

In particular, when the central body has mass zero, we are considering the
$n$-body problem with equal masses. If $\Omega\in n\mathbb{Z}$, the solutions
with isotropy group $\mathbb{Z}_{n}\left(  \zeta,\zeta,-k\zeta\right)  $
satisfy
\[
q_{j+1}(t)=q_{1}(t+jk\zeta).
\]
These are solutions where all the bodies follow the same path, and they are
known as choreographies, \cite{Ch08}.
\end{remark}

\subsection{Spatial solutions}

Now we wish to describe the symmetries of a solution with isotropy group
$\mathbb{Z}_{n}\left(  \zeta,\zeta,-k\zeta\right)  \times\mathbb{Z}_{2}%
(\kappa,\pi)$. As we saw for the group $\mathbb{Z}_{2}$, these solutions
satisfy%
\[
u_{j}(t)=u_{j}(t+\pi)\text{ and }z_{j}(t)=-z_{j}(t+\pi)\text{,}%
\]
where $u_{j}=x_{j}+iy_{j}$ is the projection in the $(x,y)$-plane.

Since the group $\mathbb{Z}_{n}$ is generated by $(\zeta,\zeta,-k\zeta)$,
these solutions satisfy the symmetries $u_{j}(t)=e^{-i\zeta}u_{\zeta
(j)}(t-k\zeta)$ and
\begin{equation}
z_{j}(t)=z_{\zeta(j)}(t-k\zeta)\text{.} \label{ss}%
\end{equation}
Therefore, the projection in the $(x,y)$-plane follows a $\pi$-periodic curve
with the symmetries of the previous section. The spatial solutions go around
this projected curve twice, once with $z_{j}(t)$ and once with $z_{j}(-t)$.
Hence, the solutions look like spatial eight where the spatial positions
satisfy the symmetries (\ref{ss}).

Hence, we need to identify the spatial symmetries (\ref{ss}). For the central
body we have that
\[
z_{0}(t)=z_{0}(t+k\zeta).
\]
Using the notation $z_{j}=z_{j+kn}$ for the other $n$ bodies, their spatial
positions are related by%
\[
z_{j+1}(t)=z_{1}(t+jk\zeta).
\]

Therefore, the spatial curves of the $n$ bodies are determined by just one of
them. To see one example, we suppose that $n=2m$ and choose $k=m$. In this
case the central body remains at the center. Moreover, the $n$ bodies with
equal masses satisfy
\[
u_{j+1}(t)=e^{\ ij\zeta}u_{1}(t+j\pi)=e^{\ ij\zeta}u_{1}(t)
\]
and%
\[
z_{j+1}(t)=z_{1}(t+j\pi)=(-1)^{j}z_{1}(t).
\]
Thus, there are two $m$-polygons which oscillate vertically, one with
$z_{1}(t)$ and the other with $-z_{1}(t)$. Furthermore, the projection of the
two $m$-polygons in the plane is always a $2m$-polygon. These solutions are
known as Hip-Hop orbits.

\begin{remark}
An element in the intersection of two isotropy subspaces, for $k_{1}$ and
$k_{2}$, will be such that $u_{0}(t)=0$, $u_{j}(t)$, $z_{0}(t)$, $z_{j}(t)$
will be $(k_{2}-k_{1})\zeta$-periodic, that is, if $h$ is the greatest common
factor of $n$ and $k_{2}-k_{1}$ and $\tilde{n}h=n$, then the period will be
$2\pi/\tilde{n}$.
\end{remark}

\section{The bifurcation theorem}

The orthogonal degree is defined for orthogonal maps which are non-zero on the
boundary of some open bounded invariant set. The degree is made of integers,
one for each orbit type, and it has all the properties of the usual Brouwer
degree. Hence, if one of the integers is non-zero, then the map has a zero
corresponding to the orbit type of that integer. In addition, the degree is
invariant under orthogonal deformations that are non-zero on the boundary. The
degree has other properties such as sum, products and suspensions, for
instance, the degree of two pieces of the set is the sum of the degrees. The
interested reader may consult \cite{IzVi03}, Chapters 2 and 4,\cite{BaKrSt06}
and \cite{Ry05} for more details on equivariant degree and degree for gradient maps.

Now, if one has an isolated orbit, then its linearization at one point of the
orbit $x_{0}$ has a block diagonal structure, due to Schur's lemma,
\cite{IzVi03}, Lemma 7.2, p.30, where the isotropy subgroup of $x_{0}$ acts as
$\mathbb{Z}_{n}$ or as $S^{1}$. Therefore, the orthogonal index of the orbit
is given by the signs of the determinants of the submatrices where the action
is as $\mathbb{Z}_{n}$, for $n=1$ and $n=2$, and the Morse indices of the
submatrices where the action is as $S^{1}$, \cite{IzVi03}, Theorem 3.1, p.
247. In particular, for problems with a parameter, if the orthogonal index
changes at some value of the parameter, one will have bifurcation of solutions
with the corresponding orbit type. Here, the parameter is the frequency $\nu$,
\cite{IzVi03}, Proposition 3.1, p. 255.

For a $k$-dimensional orbit with a tangent space generated by $k$ of the
infinitesimal generators of the action of the group, one uses a Poincar\'{e}
section for the map augmented with $k$ Lagrange-like multipliers for the
generators. (See the construction in \cite{IzVi03}, Section 4.3, p. 245.) For
instance, for the action of $SO(2)$, the study of the zeros of the equivariant
map $F(x)$, orthogonal to the generator $Ax$, is equivalent to the study of
the zeros of $F(x)+\lambda Ax$, if $x$ is not fixed by the the group, i.e., if
$Ax$ is not $0$, for which $\lambda$ is $0$. In this way, one has added an
artificial parameter. This trick has been used very often and, in the context
of a topological degree argument, was called \textquotedblleft orthogonal
degree\textquotedblright\ by S. Rybicki in \cite{Ry94}. See also \cite{Da85}
and \cite{IMV89}, p. 481, for the case of gradients. The general case of the
action of an abelian group was treated in \cite{IzVi99}. The complete study of
the orthogonal degree theory is given in \cite{IzVi03}, Chapters 2 and 4.

Any Fourier mode will give rise to an orbit type (modes which are multiples of
it), hence one has an element of the orthogonal degree for each mode.
Furthermore, if $x(t)$ is a periodic solution, with frequency $\nu$, then
$y(t)= x(nt)$ is a $2\pi/n$-periodic solution, with frequency $\nu/n$. Hence,
any branch arising from the fundamental mode will be reproduced in the
harmonic branch. If one wishes to study period-doubling, then one has to
consider the branch corresponding to $\pi$-periodic solutions, \cite{IzVi03},
Corollary 2.1, p.219.

The complete study of the orthogonal degree theory is given in \cite{IzVi03},
Chapters 2 and 4.

\subsection{General relative equilibrium}

To use successfully the results of the orthogonal degree from \cite{IzVi03},
Proposition 3.2, p. 258, we need to verify the hypothesis that the orbit
$\Gamma x$ is hyperbolic near a bifurcation point, \cite{IzVi03}, Definition
2.2, p. 222. In particular, we need to prove that the kernel of $M(0)^{\perp
e}$ is generated by $A_{1}x_{0}=-\mathcal{\bar{J}}x_{0}$. Since $A_{1}x_{0}\in
V_{0}$ and $e\in V_{1}$, an equivalent condition consists in showing that the
kernel of $M_{0}(0)$ is generated by $T_{0}^{-1}(A_{1}x_{0}) $, and that the
kernel of $M_{1}(0)$ is generated by $T^{-1}(e)=(1,...,1)$.

\begin{definition}
Following \cite{IzVi03}, p. 258, we define $\sigma$ as the sign of the
determinant of $M_{0}(0)$ in the space orthogonal to $T_{0}^{-1}%
(\mathcal{\bar{J}}x_{0})$.
\end{definition}

Now, if $(z_{1},...,z_{n})$ is in the kernel of $M_{1}(0)=(a_{ij})$, then
$z_{j}=z_{i}$, because%
\[
0=\sum_{j=1}^{n}a_{ij}z_{j}=\sum_{j=1~(j\neq i)}^{n}(z_{j}-z_{i})a_{ij}.
\]
Consequently, the kernel of $M_{1}(0)$ is generated by $(1,...,1)$, and the
hypothesis of \cite{IzVi03}, Proposition 3.2, p. 258, near a bifurcation
point, is assured by the condition $\sigma\neq0$.

\begin{definition}
Let $n_{j}(\nu)$ be the Morse number of $M_{j}(\nu)$, for $j=0,1$, and define%
\[
\eta_{j}(\nu)=\sigma(n_{j}(\nu-\rho)-n_{j}(\nu+\rho)).
\]

\end{definition}

The number $\eta_{j}(\nu_{0})$ represents the change of the Morse index at the
point $\nu_{0}$. Applying the bifurcation theorems of (\cite{IzVi03}), Remark
3.5, p. 259, we get the following result:

\begin{theorem}
If $\eta_{j}(\nu_{j})$ is different from zero, the equilibrium $x_{0}$ has a
global bifurcation of periodic solutions from $2\pi/\nu_{j}$, with isotropy
group $\mathbb{Z}_{2}(\kappa,j\pi)$.
\end{theorem}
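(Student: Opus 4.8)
The plan is to apply the abstract bifurcation machinery of the orthogonal degree, specifically the change-of-Morse-index criterion from \cite{IzVi03}, Remark 3.5, p. 259, to the finite-dimensional reduced map $f(x_1,x_2(x_1,\nu),\nu)$ obtained in Section 3. The setup has already been arranged so that near the equilibrium orbit $\Gamma x_0$ the only degeneracy in $M(0)^{\perp e}$ is the one forced by the rotational symmetry, namely the kernel generated by $A_1 x_0 = -\mathcal{\bar J}x_0$; this was reduced to the two statements that $\ker M_0(0)$ is spanned by $T_0^{-1}(\mathcal{\bar J}x_0)$ and $\ker M_1(0)$ is spanned by $(1,\dots,1)$, the latter having just been verified, and the former being exactly the content of the hypothesis $\sigma\neq 0$. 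So under $\sigma\neq 0$ the orbit is hyperbolic in the sense of \cite{IzVi03}, Definition 2.2, p. 222, and \cite{IzVi03}, Proposition 3.2, p. 258 applies.

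The key steps, in order, are as follows. First I would recall that, because the map is $\Gamma\times S^1$-orthogonal with $\Gamma = \mathbb{Z}_2\times SO(2)$, the orthogonal degree of a large invariant ball minus a small neighbourhood of the equilibrium orbit decomposes over Fourier modes $l$ and over the two irreducible blocks $V_0, V_1$, with the block $V_0$ carrying isotropy $\mathbb{Z}_2$ and $V_1$ carrying isotropy $\mathbb{\tilde Z}_2$. Second, for each block $j\in\{0,1\}$ and each mode $l$, the relevant piece of the equivariant map augmented by one Lagrange multiplier for the generator $A_1 x_0$ has, after the Poincar\'e section construction of \cite{IzVi03}, Section 4.3, a local index at a candidate frequency that is controlled by the Morse number $n_j$ of the matrix $M_j(l\nu)$, and the jump of this index across $\nu=\nu_j$ is precisely $\eta_j(\nu_j)=\sigma(n_j(\nu_j-\rho)-n_j(\nu_j+\rho))$ --- the factor $\sigma$ being the contribution of the complementary directions, i.e. the sign of $\det M_0(0)$ off the kernel. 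Third, if $\eta_j(\nu_j)\neq 0$ then the corresponding component of the orthogonal degree changes across $\nu_j$, so by the homotopy invariance and existence properties of the degree there must be a zero with the stated orbit type on the boundary of any small neighbourhood of $(x_0,\nu_j)$; the global alternative of \cite{IzVi03}, Remark 3.5, p.259 then upgrades this to a continuum which either becomes unbounded in norm, runs to the period $1/\nu\to\infty$, runs to collision, or meets another equilibrium with cancelling degree jumps --- exactly the notion of global branch defined in the introduction.

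I expect the main obstacle --- or at least the only point that needs genuine care rather than citation --- to be the bookkeeping that identifies the local change in the orthogonal index with the signed Morse-number jump $\eta_j(\nu_j)$, and in particular the role of the restriction to $\mathcal{W}$ (orthogonality to $e$) and of the Lagrange multiplier for $A_1 x_0$. One must check that passing to $\mathcal{W}$ removes exactly the translational kernel direction $e\in V_1$ without disturbing the $V_0$ block, so that $M(0)^{\perp e}$ has kernel equal to $\langle -\mathcal{\bar J}x_0\rangle$; that the augmented map with the multiplier is orthogonal and that its trivial branch has the expected hyperbolicity; and that the sign normalization $\sigma$ is the correct common factor for both $j=0$ and $j=1$ (note that $\sigma$ is defined through $M_0(0)$ even though it multiplies $n_1$ as well, which is legitimate because $\sigma$ records the sign of the determinant on the whole complement of the joint kernel, and the $V_1$ directions contribute trivially to that sign at $\nu=0$ once one uses $\ker M_1(0)=\langle(1,\dots,1)\rangle$). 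Everything else is a direct invocation of \cite{IzVi03}: the existence of the degree, its decomposition over orbit types, and the translation of a nonzero degree jump into a global bifurcating continuum.
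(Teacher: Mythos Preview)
Your proposal is correct and follows precisely the route the paper takes: the paper gives no argument beyond the sentence ``Applying the bifurcation theorems of \cite{IzVi03}, Remark 3.5, p. 259, we get the following result,'' and your outline is simply a faithful unpacking of that citation---hyperbolicity of the orbit from $\sigma\neq 0$ and the computation of $\ker M_1(0)$, Schur decomposition into the $V_0$ and $V_1$ blocks with their respective isotropies $\mathbb{Z}_2$ and $\tilde{\mathbb{Z}}_2$, identification of the local jump in the orthogonal index with $\eta_j(\nu_j)$, and the global alternative. One small caution: your parenthetical explanation that ``the $V_1$ directions contribute trivially to that sign at $\nu=0$'' is not literally correct (the eigenvalues of $M_1(0)$ on $\langle(1,\dots,1)\rangle^\perp$ are all negative, so the sign there is $(-1)^{n-1}$); the actual reason $\sigma$ is taken only from $M_0(0)$ is that, for the $l=0$ mode, $S^1$ acts trivially and hence $(\kappa,\pi)$ acts as $\kappa$, so the fixed-point subspace of both $\mathbb{Z}_2$ and $\tilde{\mathbb{Z}}_2$ at $l=0$ is $V_0$, and it is on this fixed-point subspace that the sign factor in \cite{IzVi03}, p.~258 is computed.
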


The bifurcation branch is \emph{non-admissible} when a) the norm or the period
goes to infinity or b) the branch ends in a collision path. In any other case,
we say that the bifurcation is \emph{admissible. }By global bifurcation we
mean that: if the branch is admissible, then the branch must return to other
bifurcation points, and the sum of the local degrees at the bifurcation points
$\eta_{k}(\nu)$ must be zero. See \cite{Iz95} for this kind of arguments.

\subsection{The polygonal equilibria}

We have decomposed the matrix $M(\nu)$ into the two blocks $M_{0}(\nu)$ and
$M_{1}(\nu)$. Thus, there is a bifurcation of periodic solutions when the
Morse index of $M_{k}(\nu)$ changes.

For the polygonal equilibrium, we have decomposed the blocks $M_{0}(\nu)$ and
$M_{1}(\nu)$ into $m_{01}(\nu),...,m_{0n}(\nu)$ and $m_{11}(\nu),...,m_{1n}%
(\nu)$ respectively. Actually, the orthogonal degree for the polygonal
equilibrium has one element for each matrix $m_{jk}(\nu)$. Thus, we may prove
a more complete result because the group of symmetries is bigger.

\begin{definition}
For $j=0,1$ and $k=1,...,n$, set $n_{jk}(\nu)$ to be the Morse number of
$m_{jk}(\nu)$, and define%
\[
\eta_{jk}(\nu)=\sigma(n_{jk}(\nu-\rho)-n_{jk}(\nu+\rho))\text{.}%
\]

\end{definition}

The fixed point space of the isotropy group $\Gamma_{\bar{a}}\times S^{1}$
corresponds to the block $m_{0n}(0)=B_{n}$. Since the generator of the kernel
is $A_{1}\bar{a}=T_{n}(-n^{1/2}e_{2})$, then, in this case, $e_{2}$ is in the
kernel of $m_{0n}(0)$.

Let $\sigma$ be the sign of $m_{0n}(0)$ in the space orthogonal to $e_{2}$.
Since $\beta_{n}=-\alpha_{+}s_{1}$ and $\alpha_{n}=\alpha_{-}s_{1}$, then
$B_{n}=diag((\alpha+1)(\mu+s_{1}),0)$. Thus, for the polygonal equilibrium,
$\sigma$ is the sign of $(\alpha+1)(\mu+s_{1})$, \cite{GaIz11}, p. 3220,%
\[
\sigma=\sigma_{n}(\mu)=1\text{.}%
\]

In this case, we still need the hyperbolic condition near a bifurcation point.
That is, we need to be sure that the matrices $m_{0k}(0)=B_{k}$ are invertible
for $k=1,...,n-1$. In the paper \cite{GaIz11}, Proposition 23, we did prove
that the block $B_{k}$ is invertible except for one point $\mu_{k}\in
(s_{1},\infty)$. In fact, there is a bifurcation branch of relative equilibria
from each $\mu_{k}$ for $k=1,...,n-1$.

From the results of \cite{IzVi03}, Remark 3.5, p. 259, we may state the
following theorem for $\mu\neq\mu_{1},...,\mu_{n-1}$.

\begin{theorem}
If $\eta_{jk}(\nu_{k})$ is different from zero, then the polygonal equilibrium
has a global bifurcation of periodic solutions from $2\pi/\nu_{k}$, with
isotropy group $\mathbb{Z}_{n}\left(  \zeta,\zeta,-k\zeta\right)
\times\mathbb{Z}_{2}(\kappa,j\pi)$.
\end{theorem}

\section{Spectral analysis}

In the previous section, we have proved that there is a bifurcation of
periodic solutions when the blocks of $M_{k}(\nu)$ change their Morse index.
The spatial and planar blocks were given in proposition (\ref{E3.1.0}). We
will analyze the spatial spectrum for a general relative equilibrium,
corresponding to $M_{1}(\nu)$.

For the polygonal equilibrium, the blocks are given in propositions
(\ref{E3.2.0}) and (\ref{E3.2.1}), and the description of the isotropy groups
was done in section four. In this section we analyze completely the spectrum
of all these blocks $m_{jk}(\nu)$.

\subsection{Spatial spectrum for\ a general equilibrium}

The spatial block of a general relative equilibrium is
\[
M_{1}(\nu)=\nu^{2}\mathcal{M}_{1}+(a_{ij})_{ij=1}^{n}\text{,}%
\]
were the matrix $(a_{ij})$ was given in proposition (\ref{E1.1.1})

The following result is well known in linear algebra.

\begin{lemma}
Let $B(z_{0},r)$ be the ball of center $z_{0}$ and radius $r$. Then, the
spectrum $\sigma(A)$ of a matrix $A=(a_{ij})_{ij=1}^{n}$ is in the union
$\bigcup_{i=1}^{n}B(a_{ii},r_{i})$ with $r_{i}=\sum_{j=1(j\neq i)}%
^{n}\left\vert a_{ij}\right\vert $.
\end{lemma}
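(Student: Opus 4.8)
The plan is to prove this by the classical Gershgorin argument, working directly with an eigenvector. Let $\lambda \in \sigma(A)$, so there is a nonzero vector $v = (v_{1}, \dots, v_{n})$ with $Av = \lambda v$. The key idea is to single out the component of $v$ of largest modulus: choose an index $i \in \{1, \dots, n\}$ with $|v_{i}| = \max_{j} |v_{j}|$. Since $v \neq 0$ we have $|v_{i}| > 0$, which is the one point that needs a moment's care — it is exactly what makes the subsequent division legitimate.

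Next I would read off the $i$-th coordinate of the eigenvalue equation, namely $\sum_{j=1}^{n} a_{ij} v_{j} = \lambda v_{i}$, and rearrange it as
\[
(\lambda - a_{ii}) v_{i} = \sum_{j=1\,(j\neq i)}^{n} a_{ij} v_{j}.
\]
Taking absolute values, applying the triangle inequality, and using $|v_{j}| \leq |v_{i}|$ for every $j$, one gets $|\lambda - a_{ii}|\, |v_{i}| \leq \sum_{j\neq i} |a_{ij}|\, |v_{j}| \leq |v_{i}| \sum_{j\neq i} |a_{ij}|$. Dividing by $|v_{i}| > 0$ yields $|\lambda - a_{ii}| \leq r_{i}$, i.e. $\lambda \in B(a_{ii}, r_{i})$. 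Since this holds for some $i$ depending on $\lambda$, we conclude $\lambda \in \bigcup_{i=1}^{n} B(a_{ii}, r_{i})$, and as $\lambda$ was an arbitrary element of $\sigma(A)$, the desired inclusion follows.

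There is essentially no serious obstacle here; the only thing to be careful about is the choice of the maximal-modulus component, which guarantees $v_{i} \neq 0$ and hence both that the division step is valid and that the ratios $|v_{j}/v_{i}|$ are bounded by $1$. Everything else is the triangle inequality. If one wanted, the same statement also follows from a determinant/continuity argument (if $\lambda$ lay outside all the disks then $A - \lambda I$ would be strictly diagonally dominant, hence invertible, so $\lambda \notin \sigma(A)$), but the eigenvector proof above is the shortest and most self-contained, so that is the route I would take.
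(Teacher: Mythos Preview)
Your proof is correct; it is exactly the standard Gershgorin argument, and every step is justified. Note that the paper does not actually give a proof of this lemma at all: it simply states it as ``well known in linear algebra'' and moves on, so there is nothing to compare your approach against.
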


The matrix $(a_{ij})$ has real eigenvalues because it is selfadjoint. And from
the previous lemma, we have that the eigenvalues of the matrix $(a_{ij}) $ are
in the union $\bigcup_{i=1}^{n}B(a_{ii},\left\vert a_{ii}\right\vert )$,
because $a_{ii}=-\sum_{j\neq i}a_{ij}.$ Since $a_{ii}$ is negative, then
$M_{1}(0)=(a_{ij})$ must have $n-1$ negative eigenvalues.

Since the eigenvalues of $M_{1}(\nu)$ are continuous, and $M_{1}(0)$ has $n-1
$ negative eigenvalues, then the Morse number of $M_{1}(\nu)$ satisfies
$n_{1}(\nu)\geq n-1$ for any small $\nu$. Now, since $n_{1}(\infty)=0$, then
the matrix $M_{1}(\nu)$ must change its Morse index at $n-1$ values of $\nu$.
However, these $n-1$ values could be the same, and we may assure the existence
of only one point where the Morse index changes.

\begin{theorem}
For the $n$-body problem, each relative equilibrium with $\sigma\neq0$ has at
least one global bifurcation branch of periodic solutions with symmetries
$\mathbb{Z}_{2}(\kappa,\pi)$. Generically, the equilibrium has $n-1$
bifurcations of periodic eight solutions.
\end{theorem}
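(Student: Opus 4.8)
The plan is to read off the change in the Morse number of the spatial block $M_{1}(\nu)=\nu^{2}\mathcal{M}_{1}+(a_{ij})_{ij=1}^{n}$ as the frequency $\nu$ runs over $(0,\infty)$ and then invoke the bifurcation theorem of the preceding subsection with $j=1$. First I would fix the two endpoints of this count. By the Lemma and the remark after it, $(a_{ij})_{ij=1}^{n}$ is self-adjoint with exactly $n-1$ negative eigenvalues and a one-dimensional kernel spanned by $(1,\dots,1)$; a first-order perturbation moves that zero eigenvalue to $\nu^{2}\langle v_{0},\mathcal{M}_{1}v_{0}\rangle/\langle v_{0},v_{0}\rangle>0$, so $n_{1}(\nu)=n-1$ for all small $\nu>0$, whereas $M_{1}(\nu)$ is positive definite for $\nu$ large since $\mathcal{M}_{1}$ is, so $n_{1}(\nu)=0$ there. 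Moreover $\frac{d}{d\nu}M_{1}(\nu)=2\nu\mathcal{M}_{1}$ is positive definite for $\nu>0$, so by the min--max characterization every ordered eigenvalue of $M_{1}(\nu)$ is non-decreasing; hence $n_{1}$ is non-increasing and loses exactly $n-1$ in total. The hyperbolicity hypothesis required for \cite{IzVi03}, Proposition 3.2, p.258, is in force: the kernel of $M_{1}(0)$ is spanned by $(1,\dots,1)=T^{-1}(e)$, which disappears after the restriction to $\mathcal{W}$, and the hypothesis $\sigma\neq0$ says precisely that $M_{0}(0)$ is nonsingular on the orthogonal complement of $T_{0}^{-1}(\mathcal{\bar{J}}x_{0})$.

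For the first assertion I would note that, since $n_{1}$ descends from $n-1$ to $0$, there is at least one $\nu_{1}\in(0,\infty)$ at which $n_{1}$ jumps, and there $\eta_{1}(\nu_{1})=\sigma\bigl(n_{1}(\nu_{1}-\rho)-n_{1}(\nu_{1}+\rho)\bigr)\neq0$, the bracket being a strictly positive integer and $\sigma\neq0$. The bifurcation theorem with $j=1$ then yields a global branch of periodic solutions from $2\pi/\nu_{1}$ with isotropy group $\mathbb{\tilde{Z}}_{2}$, and by the description of the spatial representation such solutions have $\pi$-periodic horizontal projection and $z_{j}(t)=-z_{j}(t+\pi)$, i.e.\ they are the spatial eights introduced earlier.

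For the generic count I would interpret the zeros of $\det M_{1}(\nu)$ through the generalized eigenvalue problem $(a_{ij})v=-\nu^{2}\mathcal{M}_{1}v$: because $(a_{ij})$ is negative semidefinite with a simple kernel and $\mathcal{M}_{1}$ is positive definite, the pencil has $n-1$ positive generalized eigenvalues $-\lambda_{1},\dots,-\lambda_{n-1}$, and the candidate bifurcation frequencies are $\nu_{i}=\sqrt{-\lambda_{i}}$. At each $\nu_{i}$ the eigenvalue curve $\theta_{i}$ of $M_{1}(\nu)$ passing through zero has $\theta_{i}'(\nu_{i})=2\nu_{i}\langle v_{i},\mathcal{M}_{1}v_{i}\rangle>0$, so the crossing is transversal; if the $\lambda_{i}$ are pairwise distinct, each crossing is simple, $n_{1}$ drops by exactly one at each $\nu_{i}$, hence $\eta_{1}(\nu_{i})=\sigma\neq0$, and the bifurcation theorem gives $n-1$ distinct global branches of spatial eights. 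The hard part is the genericity statement itself: one must show that the generalized spectrum of $\bigl((a_{ij}),\mathcal{M}_{1}\bigr)$ is simple, and that no higher-mode resonance at $l\nu_{i}$ spoils the global alternative, away from a closed, lower-dimensional subset of the relative equilibria; the Morse-index bookkeeping above is routine by comparison.
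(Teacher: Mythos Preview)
Your proposal is correct and follows essentially the same approach as the paper: both arguments use the Gershgorin lemma to see that $M_{1}(0)=(a_{ij})$ has $n-1$ negative eigenvalues, observe that $n_{1}(\nu)=0$ for large $\nu$, and conclude that the Morse index must drop, invoking the bifurcation theorem for $j=1$. Your version is slightly more detailed in two respects: you add the monotonicity observation $\tfrac{d}{d\nu}M_{1}(\nu)=2\nu\mathcal{M}_{1}>0$ (which the paper does not state, though it is implicit in the equal-mass discussion that follows the theorem), and you phrase the generic case as a generalized eigenvalue problem for arbitrary masses, whereas the paper only carries out the explicit diagonalization for equal masses $m_{j}=m$; both are natural refinements rather than a different route.
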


In the particular case where all the bodies have the same mass, $m_{j}=m$, let
$P$ be a matrix such that $m^{-1}(a_{ij})=P^{-1}\Lambda P$, where
$\Lambda=diag(-\nu_{1}^{2},...,-\nu_{n}^{2})$ with%
\[
-\nu_{1}^{2}\leq...\leq-\nu_{n-1}^{2}<\nu_{n}=0\text{.}%
\]
Since%
\[
M_{1}(\nu)=mP^{-1}(\nu^{2}I+\Lambda)P\text{,}%
\]
then the matrix $M_{1}(\nu)$ has eigenvalues $\lambda_{k}=m(\nu^{2}-\nu
_{k}^{2})$.

If all the $\nu_{k}$'s are different, then $n_{1}(\nu_{k}-\rho)=k$ and
$n_{1}(\nu_{k}+\rho)=k-1$. Thus, for $k=1,...,n-1$, the change of Morse index
of $M_{1}(\nu)$ at the points $\nu_{k}$ is%
\[
\eta_{1}(\nu_{k})=\sigma.
\]

If the eigenvalue $\nu_{k}$ has multiplicity $j$, it is easy to prove that the
change of the Morse index is $\eta_{1}(\nu_{k})=j\sigma$. In any case, all
bifurcation points have indices of the same sign, then for the equal mass
$n$-body problem, these bifurcations branches must be non-admissible or
connect to other relative equilibria.

\subsection{Spatial spectrum for the polygonal equilibrium}

\begin{theorem}
Define $\nu_{k}=\sqrt{\mu+s_{k}}$ for $k\in\{1,...,n-1\}$, and $\nu_{n}%
=\sqrt{\mu+n}$. Then, the polygonal equilibrium has a global bifurcation of
periodic solutions from $2\pi/\nu_{k}$ with symmetries\ $\mathbb{Z}_{n}\left(
\zeta,\zeta,-k\zeta\right)  \times\mathbb{Z}_{2}(\kappa,\pi)$ for each
$k\in\{1,...,n\}$.
\end{theorem}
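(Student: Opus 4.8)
The plan is to apply the bifurcation criterion of the previous section (namely, the theorem asserting that $\eta_{1k}(\nu_k)\neq 0$ forces a global bifurcation with isotropy $\mathbb{\tilde{Z}}_n(k)\times\mathbb{\tilde{Z}}_2$) block by block to the spatial matrices $m_{1k}(\nu)$ computed in Proposition \ref{E3.2.1}. First I would record that, since $s_k>0$ for $k\in\{1,\dots,n-1\}$ (each summand $2\sin^2(kj\zeta/2)/(2^{\alpha+1}\sin^{\alpha+1}(j\zeta/2))$ is nonnegative and not all zero), the scalar block $m_{1k}(\nu)=\nu^2-(\mu+s_k)$ has a single simple zero at $\nu_k=\sqrt{\mu+s_k}>0$. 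Its Morse number is $n_{1k}(\nu)=1$ for $\nu<\nu_k$ and $0$ for $\nu>\nu_k$, so $n_{1k}(\nu_k-\rho)-n_{1k}(\nu_k+\rho)=1$ and hence $\eta_{1k}(\nu_k)=\sigma=\sigma_n(\mu)=1\neq 0$. This gives the bifurcation for $k=1,\dots,n-1$.

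For $k=n$ the block is the $2\times 2$ matrix $m_{1n}(\nu)$, but restricted — because $M_1(0)$ lives on the complement of $e$ — to the line orthogonal to the eigenvector $(1,\sqrt n)$; as computed in the excerpt, on that complement $m_{1n}(0)^{\perp}=-(n+1)\mu$. So I would instead track the Morse number of the full $2\times 2$ matrix $m_{1n}(\nu)=\begin{pmatrix}\mu(\nu^2-n)&\sqrt n\,\mu\\ \sqrt n\,\mu&\nu^2-\mu\end{pmatrix}$ and subtract off the contribution of the spurious eigenvalue that stays on the $(1,\sqrt n)$-direction. The determinant is $\det m_{1n}(\nu)=\mu(\nu^2-n)(\nu^2-\mu)-n\mu^2=\mu\bigl(\nu^4-(n+\mu)\nu^2\bigr)=\mu\nu^2(\nu^2-(n+\mu))$; thus (for $\mu>0$) the nonzero eigenvalue directions give a sign change of $\det$ precisely at $\nu_n=\sqrt{\mu+n}$, where one eigenvalue crosses zero transversally while the other stays negative. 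Therefore $n_{1n}(\nu_n-\rho)-n_{1n}(\nu_n+\rho)=1$ on the relevant (orthogonal-to-$e$) complement, and $\eta_{1n}(\nu_n)=\sigma=1\neq 0$, which yields the bifurcation for $k=n$ as well. Collecting these $n$ statements and invoking the bifurcation theorem of Section 6 gives the claim; the symmetry labels $\mathbb{\tilde{Z}}_n(k)\times\mathbb{\tilde{Z}}_2$ are exactly the isotropy groups of the blocks $W_k$ in $V_1$ identified in Section 4.

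The main obstacle I anticipate is not the Morse-index bookkeeping (which is elementary once $s_k>0$ is established) but the verification that the hypotheses of the cited bifurcation proposition actually hold here, namely the hyperbolicity of the orbit near each $\nu_k$: one must know that the remaining blocks — in particular $m_{0k}(0)=B_k$ on the planar side — are invertible, so that $M(0)^{\perp e}$ has kernel exactly the one-dimensional span of $-\mathcal{\bar J}x_0$. By Proposition 23 of \cite{GaIz11} this fails only at the isolated values $\mu=\mu_1,\dots,\mu_{n-1}$, so the statement is understood to be for $\mu\notin\{\mu_1,\dots,\mu_{n-1}\}$ (with $m_{1n}(0)^{\perp}$ additionally requiring $\mu\neq 0$, which is automatic for the polygonal problem with a genuine central mass); a secondary point to check is that the values $\nu_k$ are distinct from one another and from any frequency at which some $m_{0k}(\nu)$ degenerates, so that the bifurcation points are genuinely separated — but even without that, the orthogonal degree argument still produces the branch from each $\nu_k$ with the asserted symmetry, since the degree element attached to the block $m_{1k}$ is nonzero there regardless of coincidences with other blocks.
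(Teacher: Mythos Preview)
Your proposal follows essentially the same route as the paper's proof: for $k\in\{1,\dots,n-1\}$ you read off the single sign change of the scalar $m_{1k}(\nu)=\nu^{2}-(\mu+s_{k})$, and for $k=n$ you compute $\det m_{1n}(\nu)=\mu\nu^{2}(\nu^{2}-(\mu+n))$ and extract the Morse-index jump at $\nu_{n}$, then invoke the bifurcation theorem with $\sigma=1$. One small slip: near $\nu_{n}$ the \emph{non-crossing} eigenvalue of $m_{1n}(\nu)$ is positive, not negative (the trace there equals $\mu^{2}+n>0$), but this does not affect your Morse count $n_{1n}(\nu_{n}-\rho)-n_{1n}(\nu_{n}+\rho)=1$, which follows directly from the sign of the determinant exactly as in the paper.
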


\begin{proof}
For $k\in\{1,...,n-1\}$, we have that $m_{1k}(\nu)=\nu^{2}-(\mu+s_{k})$
changes only at $\nu_{k}$. Since $\sigma=1$, then $\eta_{1}(\nu_{k})=1$. For
$k=n$, we have that
\[
\det m_{1n}(\nu)=\nu^{2}\mu\left(  \nu^{2}-(\mu+n)\right)
\]
changes only at $\nu_{n}$. Since $\det m_{1n}$ is negative for $\nu<\nu_{n}$,
then $n_{1n}(\nu_{n}-\rho)=1$. Moreover, since $n_{1n}(\infty)=0$, then
$\eta_{1n}(\nu_{n})=1$. The existence of the bifurcations follows from the
bifurcation theorem.
\end{proof}

Since the spatial bifurcations for the polygonal equilibrium have index
$\eta=1$, then all the bifurcation branches must be non-admissible or go to
the other equilibria.

\subsection{Planar spectrum for the polygonal equilibrium}

In the polygonal equilibrium $\bar{a}$, the frequency is $\omega=\mu+s_{1}$.
The equations have physical meaning for positive mass, $\mu>0$, although, the
problem is well defined for $\mu>-s_{1}$. In this section we will analyze the
spectrum of the blocks $m_{0k}(\nu)$, for $\mu>-s_{1}$.

Normalize the period of $m_{0k}(\nu)$ by $\sqrt{\omega}$ and define $m_{k}%
(\nu)=m_{0k}(\sqrt{\omega}\nu)$, then%
\begin{align*}
m_{k}(\nu)  &  =\omega\lbrack\nu^{2}I-2\nu(iJ)]+B_{k}\text{ for }%
k\in\{2,...,n-2,n\}\text{ and}\\
m_{1}(\nu)  &  =\omega\lbrack\nu^{2}diag(\mu,I)-2\nu diag(\mu,iJ)]+B_{1}%
\text{.}%
\end{align*}

Notice that $m_{k}(\nu)$ is selfadjoint, so with real eigenvalues. Since
$m_{n-k}(\nu)=\bar{m}_{k}(-\nu)$, then the matrices $m_{n-k}(\nu)$ and
$m_{k}(-\nu)$ have the same spectrum. Thus, the Morse numbers $n_{k}(\nu)$
satisfy
\[
n_{n-k}(\nu)=n_{k}(-\nu).
\]

We cannot calculate explicitly the sums $s_{k}$, but we shall use that the
sums $s_{k}$ are positive, satisfy $s_{k}=s_{n-k}=s_{n+k}$, and that $s_{k}$
are increasing in $k$ for $k\in\{0,...,n/2\}$. This last fact is proved in the
appendix. The matrices $B_{k}$ are given in the proposition (\ref{E3.2.0}).
However, in order to simplify the computations, we shall restrict the analysis
to the Newton case, that is with $\alpha=2$.

\subsubsection{Block $k=n$}

The block $B_{n}$ is $B_{n}=(3/2)(\mu+s_{1})(I+R)$ with $R=diag(1,-1)$.
Therefore,%
\[
\sigma=sgn(e_{1}^{T}B_{n}e_{1})=1,
\]
for $\mu>-s_{1}$.

\begin{proposition}
The matrix $m_{n}(\nu)$ changes its Morse index only at the positive value
$\nu=1$ with
\[
\eta_{n}(1)=1.
\]

\end{proposition}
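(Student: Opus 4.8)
The plan is to reduce the claim to one $2\times2$ determinant computation. The key structural observation is that $m_{n}(\nu)$ is selfadjoint with strictly positive trace, so its Morse number is controlled entirely by the sign of its determinant: it equals $1$ precisely when $\det m_{n}(\nu)<0$ and $0$ when $\det m_{n}(\nu)>0$.

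First I would make $B_{n}$ explicit. Since $R=\mathrm{diag}(1,-1)$ we have $I+R=\mathrm{diag}(2,0)$, hence $B_{n}=(3/2)(\mu+s_{1})(I+R)=\mathrm{diag}(3\omega,0)$ with $\omega=\mu+s_{1}$. Substituting into $m_{n}(\nu)=\omega[\nu^{2}I-2\nu(iJ)]+B_{n}$, and using that $iJ$ is the Hermitian matrix with off-diagonal entries $-i$ and $i$, one gets
\[
m_{n}(\nu)=\omega\begin{pmatrix}\nu^{2}+3 & 2i\nu\\ -2i\nu & \nu^{2}\end{pmatrix}.
\]
Its trace is $\omega(2\nu^{2}+3)$, strictly positive for $\mu>-s_{1}$, and its determinant is
\[
\det m_{n}(\nu)=\omega^{2}\bigl[(\nu^{2}+3)\nu^{2}-4\nu^{2}\bigr]=\omega^{2}\nu^{2}(\nu^{2}-1).
\]

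Then I would conclude: for $\omega>0$ the determinant is negative on $(0,1)$ and positive on $(1,\infty)$, so (using the positive trace) $n_{n}(\nu)=1$ for $0<\nu<1$ and $n_{n}(\nu)=0$ for $\nu>1$. Hence the only positive frequency where the Morse index of $m_{n}(\nu)$ changes is $\nu=1$, and since $\sigma=1$ — the sign of the $(1,1)$-entry $3\omega$ of $B_{n}$, as recorded just before the statement —
\[
\eta_{n}(1)=\sigma\bigl(n_{n}(1-\rho)-n_{n}(1+\rho)\bigr)=1\cdot(1-0)=1.
\]

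There is essentially no obstacle here; the entire content is the determinant computation together with the positive-trace remark. The one point warranting a word of care is that $\det m_{n}(\nu)$ also vanishes at $\nu=0$: but $\nu=0$ is the trivial symmetry point, where the kernel of $m_{n}(0)=B_{n}$ is spanned by $e_{2}$, the image of the infinitesimal rotation generator $A_{1}\bar{a}=T_{n}(-n^{1/2}e_{2})$, and on the orthogonal complement of $e_{2}$ the block $B_{n}=\mathrm{diag}(3\omega,0)$ is invertible — this is exactly the hyperbolicity condition $\sigma\neq0$. So $\nu=0$ is not a genuine change of Morse index and yields no bifurcation point, confirming that $\nu=1$ is the only value at which $m_{n}(\nu)$ changes its Morse index.
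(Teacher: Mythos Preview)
Your proof is correct and follows essentially the same approach as the paper: compute $m_{n}(\nu)=\omega[\nu^{2}I-2\nu(iJ)+\mathrm{diag}(3,0)]$, obtain $\det m_{n}(\nu)=\omega^{2}\nu^{2}(\nu^{2}-1)$, and read off the Morse-index jump at $\nu=1$. You add the positive-trace observation and a careful word about $\nu=0$, both of which make the argument slightly more explicit than the paper's terse version, but the substance is identical.
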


\begin{proof}
The block $m_{n}(\nu)$ is
\[
m_{n}(\nu)=\omega\lbrack\nu^{2}-2\nu(iJ)+diag(3,0)]\text{.}%
\]
Thus, the determinant $d_{n}(\nu)=\omega^{2}\nu^{2}\left(  \nu-1\right)
\left(  \nu+1\right)  $ is zero only at $\pm1$. Since $d_{n}(\varepsilon)<0$,
then $n_{n}(\varepsilon)=1$. Moreover, since $n_{n}(\infty)=0$, then $\eta
_{n}(1)=1$.
\end{proof}

Therefore, there is a global branch of periodic solutions bifurcating from the
polygonal equilibrium starting with the period $2\pi$ and with symmetries
$\mathbb{Z}_{n}\left(  \zeta,\zeta,-k\zeta\right)  \times\mathbb{Z}_{2}%
(\kappa)$.

\begin{remark}
In fixed coordinates, the central body satisfies $q_{0}(t)=0$ and the other
$n$ bodies satisfy $u_{j+1}(t)=e^{ij\zeta}u_{1}(t)$. Now, if $\nu$ remains $1$
on the branch, that is, if the frequency, of the solutions in fixed
coordinates, is $\omega$, then $q_{j+1}(t)= e^{ij\zeta}q_{1}(t)$. Thus, this
bifurcation branch may be made of solutions moving on curves looking like ellipses.
\end{remark}

\subsubsection{Blocks $k\in\{2,...,n-2\}$}

Define $d_{k}(\nu)$ to be the determinant of $m_{k}(\nu)$.

\begin{proposition}
The determinant of $m_{k}(\nu)$ is
\[
d_{k}(\nu)=\omega^{2}\nu^{4}+(2\alpha_{k}-\omega-s_{1})\omega\nu^{2}%
-4\omega\gamma_{k}\nu+a_{k}+\mu b_{k}\text{,}%
\]
where
\[
a_{k}=(s_{1}+\alpha_{k})^{2}-\beta_{k}^{2}-\gamma_{k}^{2}\text{ and }%
b_{k}=3(s_{1}+\alpha_{k}+\beta_{k})>0\text{.}%
\]

\end{proposition}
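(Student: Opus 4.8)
The plan is a direct computation of a $2\times2$ determinant, using only the explicit form of $B_k$ recorded in Proposition \ref{E3.2.0} specialized to the Newton case $\alpha=2$ (so $\alpha_-=1/2$, $\alpha_+=3/2$) together with the normalized block $m_k(\nu)=\omega[\nu^2 I-2\nu(iJ)]+B_k$ for $k\in\{2,\dots,n-2\}$.

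First I would write $B_k$ entrywise. Using $I+R=\mathrm{diag}(2,0)$, $R=\mathrm{diag}(1,-1)$ and the standard form of $J$, Proposition \ref{E3.2.0} gives in the Newton case
\[
B_k=\begin{pmatrix}3\mu+s_1+\alpha_k-\beta_k & \gamma_k i\\ -\gamma_k i & s_1+\alpha_k+\beta_k\end{pmatrix},
\]
and adding $\omega\nu^2 I-2\omega\nu(iJ)$ gives
\[
m_k(\nu)=\begin{pmatrix}\omega\nu^2+3\mu+s_1+\alpha_k-\beta_k & (2\omega\nu+\gamma_k)i\\ -(2\omega\nu+\gamma_k)i & \omega\nu^2+s_1+\alpha_k+\beta_k\end{pmatrix}.
\]
This matrix is Hermitian, so $d_k(\nu)$ equals the product of its two real diagonal entries minus $(2\omega\nu+\gamma_k)^2$. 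Writing $Q=\omega\nu^2+s_1+\alpha_k$, the diagonal product is $(Q+3\mu-\beta_k)(Q+\beta_k)=Q^2+3\mu Q+3\mu\beta_k-\beta_k^2$; expanding $Q^2$ and $(2\omega\nu+\gamma_k)^2$ and collecting powers of $\nu$ then gives
\[
d_k(\nu)=\omega^2\nu^4+\omega\bigl(2s_1+2\alpha_k+3\mu-4\omega\bigr)\nu^2-4\omega\gamma_k\nu+(s_1+\alpha_k)^2-\beta_k^2-\gamma_k^2+3\mu(s_1+\alpha_k+\beta_k).
\]

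The only step that is not purely mechanical is matching the $\nu^2$ coefficient with the stated one: here I would invoke the equilibrium relation $\omega=\mu+s_1$, which replaces $3\mu$ by $3\omega-3s_1$ and turns $2s_1+2\alpha_k+3\mu-4\omega$ into $2\alpha_k-s_1-\omega$, so that the coefficient becomes $(2\alpha_k-\omega-s_1)\omega$. The constant term is then visibly $a_k+\mu b_k$ with $a_k=(s_1+\alpha_k)^2-\beta_k^2-\gamma_k^2$ and $b_k=3(s_1+\alpha_k+\beta_k)$, and it only remains to check $b_k>0$: $s_1>0$, $\alpha_k=\alpha_-(s_{k+1}+s_{k-1})/2>0$ because every $s_j$ with $1\le j\le n-1$ is positive, and $\beta_k=\alpha_+(s_k-s_1)\ge0$ because $s_k\ge s_1$, the last inequality following for $2\le k\le n-2$ from $s_k=s_{n-k}$ together with the monotonicity of $s_k$ on $\{0,\dots,n/2\}$ proved in the appendix. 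I expect no genuine obstacle: the whole argument is elementary linear algebra once $B_k$ has been written out, the only point to watch being the substitution $\omega=\mu+s_1$ that reshapes the $\nu^2$ coefficient.
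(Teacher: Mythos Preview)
Your proof is correct and follows essentially the same route as the paper: both write $m_k(\nu)$ as a Hermitian $2\times2$ matrix, compute the determinant as the product of diagonal entries minus $(2\omega\nu+\gamma_k)^2$, and then use $\omega=\mu+s_1$ to put the $\nu^2$ coefficient in the stated form. Your argument is in fact slightly more complete, since you supply the justification for $b_k>0$ (via $s_1>0$, $\alpha_k>0$, and $s_k\ge s_1$ from the appendix), which the paper's own proof leaves implicit.
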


\begin{proof}
The block $m_{k}(\nu)$ is
\[
m_{k}(\nu)=(\omega\nu^{2}+s_{1}+\alpha_{k})I+(3\mu/2)(I+R)-\beta_{k}%
R-(2\nu\omega+\gamma_{k})(iJ).
\]
Therefore, the determinant is%
\begin{align*}
d_{k}(\nu)  &  =(\omega\nu^{2}+s_{1}+\alpha_{k}-\beta_{k}+3\mu)(\omega\nu
^{2}+s_{1}+\alpha_{k}+\beta_{k})\\
&  -(2\omega\nu+\gamma_{k})^{2}\text{.}%
\end{align*}
Since $\omega=\mu+s_{1}$ and $d_{k}(0)=b_{k}\mu+a_{k}$, then%
\[
d_{k}(\nu)=\omega^{2}\nu^{4}+(2\alpha_{k}-\omega-s_{1})\omega\nu^{2}%
-4\omega\gamma_{k}\nu+a_{k}+\mu b_{k}\text{.}%
\]

\end{proof}

We write the determinant in terms of $\omega$ as
\[
d_{k}(\nu)=a\omega^{2}+b\omega-c\text{,}%
\]
where
\[
a=\nu^{2}(\nu^{2}-1)\text{, }b=\nu^{2}(2\alpha_{k}-s_{1})-4\nu\gamma_{k}%
+b_{k}\text{ and }c=s_{1}b_{k}-a_{k}\text{.}%
\]
As a consequence of the next lemma, we shall prove that $b^{2}+4ac$ is
positive. Then $d_{k}(\mu,\nu)$ is zero at exactly the two real solutions
\[
\omega_{\pm}(\nu)=\frac{-b\pm\sqrt{b^{2}+4ac}}{2a}\text{.}%
\]

\begin{lemma}
For any $\nu\in\mathbb{R}$, we have that $c$ and $b(\nu)$ are positive and
satisfy%
\[
b^{2}(\nu)>9c\text{.}%
\]

\end{lemma}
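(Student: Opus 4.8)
The plan is to prove the three assertions in turn — $c>0$, then $b(\nu)>0$, then $b^{2}(\nu)>9c$ — relying only on elementary facts about the sums $s_{k}$ and two short algebraic identities. Throughout I am in the Newton case $\alpha=2$, so $\alpha_{k}=(s_{k+1}+s_{k-1})/4$, $\beta_{k}=\tfrac32(s_{k}-s_{1})$, $\gamma_{k}=(s_{k+1}-s_{k-1})/4$, and I abbreviate $p:=2\alpha_{k}-s_{1}=(s_{k+1}+s_{k-1})/2-s_{1}$, which is the leading coefficient of the quadratic $b(\nu)=p\nu^{2}-4\gamma_{k}\nu+b_{k}$. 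From the stated properties of the $s_{k}$ — positivity, the symmetry $s_{k}=s_{n-k}$, and the monotonicity on $\{0,\dots,n/2\}$ proved in the appendix — one gets $s_{j}\ge s_{1}$ for every $1\le j\le n-1$; since $2\le k\le n-2$ gives $k\pm1\in\{1,\dots,n-1\}$, it follows that $p\ge 0$ and, from $4|\gamma_{k}|=|s_{k+1}-s_{k-1}|\le (s_{k+1}-s_{1})+(s_{k-1}-s_{1})=2p$, that $\gamma_{k}^{2}\le p^{2}/4$. I will also use one trigonometric identity: from $\sin^{2}\theta=\tfrac12(1-\cos 2\theta)$, $\cos((k+1)\theta)+\cos((k-1)\theta)=2\cos(k\theta)\cos\theta$ and $1-\cos\theta=2\sin^{2}(\theta/2)$ one obtains
\[
s_{k+1}+s_{k-1}-2s_{k}=\tfrac12\sum_{j=1}^{n-1}\frac{\cos(kj\zeta)}{\sin(j\zeta/2)},
\]
and since $\sin(j\zeta/2)>0$ for $1\le j\le n-1$ and $2s_{1}=\tfrac12\sum_{j=1}^{n-1}1/\sin(j\zeta/2)$, this gives $|s_{k+1}+s_{k-1}-2s_{k}|\le 2s_{1}$, i.e. $s_{k+1}+s_{k-1}\le 2s_{k}+2s_{1}$.

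For $c>0$ the key is the purely algebraic identity
\[
b_{k}^{2}-9c=3b_{k}p-9\gamma_{k}^{2},
\]
which one checks by substituting $c=s_{1}b_{k}-a_{k}$, $b_{k}=3(s_{1}+\alpha_{k}+\beta_{k})$, $a_{k}=(s_{1}+\alpha_{k})^{2}-\beta_{k}^{2}-\gamma_{k}^{2}$ and $p=2(s_{1}+\alpha_{k})-3s_{1}$ and expanding. Hence $9c=b_{k}^{2}-3b_{k}p+9\gamma_{k}^{2}\ge b_{k}(b_{k}-3p)$, and it suffices to prove $b_{k}>3p$. Writing $b_{k}-3p=\tfrac32 s_{1}-\tfrac34(s_{k+1}+s_{k-1})+\tfrac92 s_{k}$ and inserting $s_{k+1}+s_{k-1}\le 2s_{k}+2s_{1}$ yields $b_{k}-3p\ge 3s_{k}>0$; therefore $9c\ge 3b_{k}s_{k}>0$, so $c>0$.

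Now assume $p>0$ (the only exception is $(n,k)=(4,2)$, where $p=\gamma_{k}=0$, so $b(\nu)\equiv b_{k}>0$ and $b^{2}(\nu)=b_{k}^{2}=9c$; that single degenerate block is the $k=n/2$ block and is dealt with on its own). Then $b(\nu)=p\nu^{2}-4\gamma_{k}\nu+b_{k}$ is an upward parabola with minimum $b(\nu_{*})=b_{k}-4\gamma_{k}^{2}/p$ at $\nu_{*}=2\gamma_{k}/p$. From $\gamma_{k}^{2}\le p^{2}/4$ we get $4\gamma_{k}^{2}/p\le p$, and since $b_{k}>3p$ this gives $b(\nu_{*})\ge b_{k}-p>0$; hence $b(\nu)\ge b(\nu_{*})>0$ for all $\nu$, which is the second assertion. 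For $b^{2}(\nu)>9c$, note that $b(\nu)\ge b(\nu_{*})>0$ gives $b^{2}(\nu)\ge b(\nu_{*})^{2}$, so it suffices to show $b(\nu_{*})^{2}>9c$. Multiplying through by $p^{2}$ and using the displayed identity, this is equivalent to $q(\gamma_{k}^{2})>0$, where $q(x)=16x^{2}-p(9p+8b_{k})x+3b_{k}p^{3}$ is an upward parabola in $x$. I evaluate $q(p^{2}/4)=p^{3}(b_{k}-\tfrac54 p)$, which is positive because $b_{k}>3p>\tfrac54 p$; since $p^{2}/4$ does not exceed the abscissa $\tfrac{1}{32}p(9p+8b_{k})$ of the vertex of $q$ (as $8p<9p+8b_{k}$), the parabola $q$ is positive on all of $(-\infty,p^{2}/4]$. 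As $0\le\gamma_{k}^{2}\le p^{2}/4$, we conclude $q(\gamma_{k}^{2})>0$, hence $b(\nu_{*})^{2}>9c$ and finally $b^{2}(\nu)\ge b(\nu_{*})^{2}>9c$.

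The only delicate part is verifying the two algebraic identities — the collapse $b_{k}^{2}-9c=3b_{k}p-9\gamma_{k}^{2}$ and the evaluation $q(p^{2}/4)=p^{3}(b_{k}-\tfrac54 p)$ — but these involve only expansion, not any new idea. The sole analytic input is the positivity $\sin(j\zeta/2)>0$ for $1\le j\le n-1$ together with the appendix's monotonicity of the $s_{k}$; these produce $s_{j}\ge s_{1}$ (hence $\gamma_{k}^{2}\le p^{2}/4$) and $s_{k+1}+s_{k-1}\le 2s_{k}+2s_{1}$ (hence $b_{k}>3p$), and the rest is elementary.
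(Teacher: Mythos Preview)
Your proof is correct but follows a genuinely different route from the paper's.

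The paper proceeds by explicit computation in the $s_{k}$'s: it shows $4c=9s_{k}^{2}-(s_{k-1}-s_{1})(s_{k+1}-s_{1})$, whence $7s_{k}^{2}<4c\le 9s_{k}^{2}$ via the appendix inequality $(s_{k+1}-s_{1})<2s_{k}-(s_{k-1}-s_{1})$; then it locates the minimum $\nu_{0}=2\gamma_{k}/p$, observes $\nu_{0}\in(0,1)$, bounds $b(\nu_{0})>b_{k}-2\gamma_{k}$, computes $4(b_{k}-2\gamma_{k})=18s_{k}-6s_{1}+5s_{k-1}+s_{k+1}>18s_{k}$, and concludes $b^{2}>(9s_{k}/2)^{2}\ge 9c$. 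Everything is anchored to the single scale $s_{k}$.

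Your argument instead pivots on the algebraic identity $b_{k}^{2}-9c=3b_{k}p-9\gamma_{k}^{2}$, which collapses the three quantities into one relation; positivity of $c$ then reduces to $b_{k}>3p$ (obtained from the same appendix inequality), and the comparison $b(\nu_{*})^{2}>9c$ becomes positivity of the quadratic $q(x)=16x^{2}-p(9p+8b_{k})x+3b_{k}p^{3}$ on $[0,p^{2}/4]$, verified by evaluating $q(p^{2}/4)=p^{3}(b_{k}-\tfrac54p)$ and checking it lies left of the vertex. The paper's route is shorter and more concrete; yours is more structural and exposes \emph{why} the inequality holds (the identity links $c$, $b_{k}$, $p$, $\gamma_{k}$ directly). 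You also isolate the degenerate case $(n,k)=(4,2)$, where $p=\gamma_{k}=0$ forces $b^{2}\equiv b_{k}^{2}=9c$ with equality; the paper's chain of strict inequalities silently collapses there as well, so your treatment is in fact more careful on this point.
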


\begin{proof}
From the definitions of $a_{k}$ and $b_{k}$ we have%
\[
4s_{1}b_{k}=3s_{1}s_{k-1}+3s_{1}s_{k+1}-6s_{1}^{2}+18s_{1}s_{k}\text{,}%
\]
and%
\[
4a_{k}=2s_{1}s_{k-1}+2s_{1}s_{k+1}-5s_{1}^{2}-9s_{k}^{2}+18s_{1}s_{k}%
+s_{k-1}s_{k+1}\text{.}%
\]
Therefore,%
\[
4c=4(s_{1}b_{k}-a_{k})=9s_{k}^{2}-\left(  s_{k-1}-s_{1}\right)  \left(
s_{k+1}-s_{1}\right)  \text{.}%
\]
Using the inequality of the appendix
\[
(s_{k+1}-s_{1})<2s_{k}-(s_{k-1}-s_{1})<2s_{k},
\]
we conclude that
\[
7s_{k}^{2}<4c\leq9s_{k}^{2}.
\]

Since $b^{\prime}(\nu)=2\nu(2\alpha_{k}-s_{1})-4\gamma_{k}$, then $b^{\prime
}(\nu)=0$ at $\nu_{0}=2\gamma_{k}/(2\alpha_{k}-s_{1})$. Since $2\alpha
_{k}-s_{1}>0$, then $\nu_{0}$ is a minimum of $b(\nu)$. Moreover, since
$(2\alpha_{k}-s_{1})-2\gamma_{k}=s_{k-1}-s_{1}\geq0$, then $\nu_{0}\in(0,1)$.
Thus,
\[
b(\nu_{0})=\nu_{0}(2\gamma_{k}-4\gamma_{k})+b_{k}>b_{k}-2\gamma_{k}.
\]
Therefore,%
\[
4b(\nu_{0})>4b_{k}-8\gamma_{k}=18s_{k}-6s_{1}+5s_{k-1}+s_{k+1}>18s_{k}\text{.}%
\]
From these inequalities one has that $b^{2}(\nu)>9(9s_{k}^{2}/4)>9c.$
\end{proof}

For $\nu^{2}>1$, since $a$ is positive, then $b^{2}+4ac$ is positive. For
$\nu^{2}\in(0,1)$, since $a\in(-1/4,0)$ and $b^{2}(\nu)>4c$, then
$b^{2}+4ac\geq4c(1+a)\geq0$. Consequently, the two solutions $\omega_{\pm}%
(\nu)\ $are real for $\nu\neq-1,0,1$.

Remember that the equations of the bodies have meaning only for $\omega
=\mu+s_{1}$ $>0$. For $\nu^{2}>1$, since $4ac$ is positive, then $\omega
_{+}(\nu)$ is positive and $\omega_{-}(\nu)$ is negative. For $\nu^{2}%
\in(0,1)$, since $a$ and $4ac$ are negative, then the two roots $\omega_{\pm
}(\nu)$ have the sign of $b$. Since $b$ is positive, then the two solutions
$\omega_{\pm}(\nu)$ are positive for $\nu^{2}\in(0,1)$.

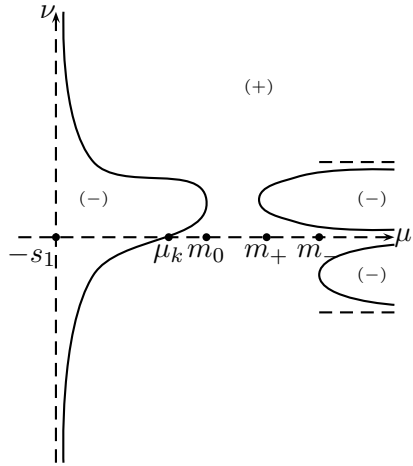
\begin{figure}[th]
\centering
\subfloat
{\ \begin{pspicture}(-3,-3)(3,3)\SpecialCoor
\psline[linestyle=dashed]{->}(-1.5,-3)(-1.5,3)
\psline[linestyle=dashed]{->}(-2,0)(3,0)
\psline[linestyle=dashed](2,1)(3,1)
\psline[linestyle=dashed](2,-1)(3,-1)
\pscurve(-1.4,3)(-1,1)(.5,.5)(-1,-.5)(-1.4,-3)
\pscurve(3,.9)(1.7,.8)(1.2,.5)(1.7,.2)(3,.1)
\pscurve(3,-.9)(2,-.5)(3,-.1)
\rput[l](3,0){\small $\mu$}
\rput[r](-1.5,3){\small $\nu$}
\rput[l](1,2){\tiny $(+)$}
\rput[l](-1.2,.5){\tiny $(-)$}
\rput[l](2.5,.5){\tiny $(-)$}
\rput[l](2.5,-.5){\tiny $(-)$}
\psdots[dotsize=3pt](-1.5,0)(0,0)(.5,0)(1.3,0)(2,0)
\rput[tr](-1.5,-.1){\small $-s_{1}$}
\rput[t](0,-.1){\small $\mu_{k}$}
\rput[t](.5,-.1){\small $m_{0}$}
\rput[t](1.3,-.1){\small $m_{+}$}
\rput[t](2,-.1){\small $m_{-}$}
\NormalCoor\end{pspicture}
}\caption{Graph of $d_{k}(\mu,\nu)=0$.}%
\end{figure}

Defining $\mu_{\pm}(\nu)=\omega_{\pm}(\nu)-s_{1}$, the matrix $m_{k}(\mu,\nu)
$ changes its Morse index only at the two curves%
\[
\mu_{+}\mathbb{(\nu)}\text{ for }\nu\in\mathbb{R}\text{, and }\mu_{-}%
(\nu)\text{ for }\nu\in(-1,0)\cup(0,1)\text{.}%
\]
These curves satisfy the inequalities $-s_{1}<\mu_{+}(\nu)<\mu_{-}(\nu)$ for
$\left\vert \nu\right\vert \in(0,1)$. Moreover, since $a\rightarrow0$ when
$\left\vert \nu\right\vert \rightarrow\{0,1\}$, then $\mu_{-}(\nu
)\rightarrow\infty$ when $\left\vert \nu\right\vert \rightarrow\{0,1\}$.
Furthermore, since $\omega_{+}(\nu)\rightarrow0$ when $\left\vert
\nu\right\vert \rightarrow\infty$, then $\mu_{+}(\nu)\rightarrow-s_{1}$ when
$\left\vert \nu\right\vert \rightarrow\infty$.

\begin{definition}
Let $m_{0}$ be the maximum of $\mu_{+}(\nu)$ in $\mathbb{R}$, $m_{+}$ be the
minimum of $\mu_{-}(\nu)$ on $(0,1)$, and $m_{-}$ be the minimum of $\mu
_{-}(\nu)$ on $(-1,0)$.
\end{definition}

\begin{proposition}
For $k\in\{2,...,n-2\}$, if $\mu\in$ $(-s_{1},\mu_{k})$, then $m_{k}(\nu)$
changes its Morse index at a positive value $\nu_{k}$, with
\[
\eta_{k}(\nu_{k})=1.
\]

For $k\in\{2,...,[n/2]\}$, if $\mu\in(\mu_{k},m_{0})$, then $m_{k}(\nu)$
changes its Morse index at two positive values $\bar{\nu}_{k}<\nu_{k}$, with%
\[
\eta_{k}(\bar{\nu}_{k})=-1\text{ and }\eta_{k}(\nu_{k})=1.
\]

For $k\in\{2,...,n-2\}$, if $\mu\in(m_{+},\infty)$, then $m_{k}(\nu)$ changes
its Morse index at two positive values $\bar{\nu}_{k}<\nu_{k}<1$, with%
\[
\eta_{k}(\bar{\nu}_{k})=-1\text{ and }\eta_{k}(\nu_{k})=1.
\]

\end{proposition}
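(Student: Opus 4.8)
The plan is to reduce the whole statement to the sign behaviour of the single scalar function $d_{k}(\nu)=\det m_{k}(\nu)$. First I would observe that $m_{k}(\nu)$ is a self-adjoint $2\times 2$ matrix and that, for $\mu>-s_{1}$, its trace $2\omega\nu^{2}+2(s_{1}+\alpha_{k})+3\mu$ is positive: at $\nu=0$ it already exceeds $2\alpha_{k}-s_{1}>0$ (the inequality used in the proof of the preceding Lemma), and it only grows with $\nu^{2}$ and with $\mu$. Hence the Morse number satisfies $n_{k}(\nu)=1$ when $d_{k}(\nu)<0$ and $n_{k}(\nu)=0$ when $d_{k}(\nu)>0$; since $\sigma=1$, at every simple zero $\nu_{0}$ of $d_{k}$ one gets $\eta_{k}(\nu_{0})=+1$ if $d_{k}$ changes there from negative to positive and $\eta_{k}(\nu_{0})=-1$ if it changes from positive to negative. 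The Proposition then amounts to counting the zeros of $\nu\mapsto d_{k}(\nu)$ on $(0,\infty)$ and recording the crossing directions, for the three ranges of $\mu$.

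Second, I would assemble the global data on $d_{k}$. We already have $d_{k}(\nu)\to+\infty$ as $\nu\to\infty$, and $d_{k}(0)=a_{k}+\mu b_{k}=b_{k}(\mu-\mu_{k})$ with $b_{k}>0$, so the sign of $d_{k}(0)$ is that of $\mu-\mu_{k}$. For the zeros with $\nu>0$ I would use the description already obtained: $d_{k}(\mu,\nu)=0$ with $\nu>0$ forces $\mu=\mu_{+}(\nu)$ for that $\nu$, or $\mu=\mu_{-}(\nu)$ with $\nu\in(0,1)$. Since $\mu_{+}$ is continuous on $(0,\infty)$ with $\mu_{+}(\nu)\to-s_{1}$ as $\nu\to\infty$ and $\mu_{+}(\nu)\to\mu_{k}$ as $\nu\to 0^{+}$ (the branch of $\{d_{k}=0\}$ that reaches $\nu=0$, the other blowing up because $\mu_{-}\to\infty$ there), its image over $(0,\infty)$ is the interval $(-s_{1},m_{0}]$; likewise $\mu_{-}$ is continuous on $(0,1)$, tends to $+\infty$ at both ends, with minimum $m_{+}$, so its image is $[m_{+},\infty)$. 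Consequently $d_{k}(\mu,\cdot)$ vanishes somewhere on $(0,\infty)$ exactly when $\mu\in(-s_{1},m_{0}]\cup[m_{+},\infty)$.

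Third, for $k\in\{2,\dots,[n/2]\}$ I would establish that $\nu\mapsto d_{k}(\nu)$ is, on $(0,\infty)$, either monotone increasing or unimodal (strictly decreasing, then strictly increasing). Differentiating $m_{k}(\nu)=(\omega\nu^{2}+s_{1}+\alpha_{k})I+(3\mu/2)(I+R)-\beta_{k}R-(2\nu\omega+\gamma_{k})(iJ)$ one finds $d_{k}'(\nu)=2\omega\,g(\nu)$ with $g(\nu)=2\omega\nu^{3}+(2\alpha_{k}-\omega-s_{1})\nu-2\gamma_{k}$; here $\gamma_{k}\ge 0$ since $s_{k-1}\le s_{k+1}$ for $k\le n/2$, so $g(0)=-2\gamma_{k}\le 0$, $g(+\infty)=+\infty$, and $g$ has at most one turning point on $(0,\infty)$ (because $g'(\nu)=6\omega\nu^{2}+(2\alpha_{k}-\omega-s_{1})$), hence at most one positive root $\nu^{\ast}$; this forces the claimed shape. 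From it $d_{k}$ has at most two zeros on $(0,\infty)$, and the crossing pattern is determined: a single zero is of type $(-,+)$, two zeros occur as $(+,-)$ then $(-,+)$. Combining with Step 2: if $\mu\in(-s_{1},\mu_{k})$ then $d_{k}(0)<0$, so there is exactly one zero $\nu_{k}$, of type $(-,+)$, whence $\eta_{k}(\nu_{k})=1$; if $\mu>\mu_{k}$ then $d_{k}(0)>0$ and, as $d_{k}(+\infty)>0$, there are $0$ or $2$ zeros, and there are $2$ precisely for $\mu\in(\mu_{k},m_{0})\cup(m_{+},\infty)$ (the break-points $m_{0},m_{+}$ being the double-zero values excluded by the open intervals), giving $\eta_{k}(\bar\nu_{k})=-1$ and $\eta_{k}(\nu_{k})=1$. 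In the last range, $\mu>m_{+}\ge m_{0}=\sup_{(0,\infty)}\mu_{+}$ rules out a zero coming from the $\mu_{+}$-branch, so both zeros come from $\mu_{-}$ and hence lie in $(0,1)$, i.e. $\bar\nu_{k}<\nu_{k}<1$; the inequality $m_{0}\le m_{+}$ needed here itself follows from Step 2 together with the bound of two zeros, since an overlap of the two $\mu$-ranges would force $d_{k}$ to have three or four zeros.

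Finally, for $k\in\{[n/2]+1,\dots,n-2\}$, where only the first and third cases are asserted, I would deduce them from the identity $m_{n-k}(\nu)=\bar m_{k}(-\nu)$ obtained earlier, which gives $n_{k}(\nu)=n_{n-k}(-\nu)$: running the Step 3 analysis for the block $n-k$ (for which $\gamma_{n-k}\ge 0$) on the negative-frequency axis and reflecting back produces the stated $\nu_{k}$, resp. $\bar\nu_{k}<\nu_{k}<1$, with the same indices. The real obstacle is the one-variable analysis of $d_{k}$ in Step 3: pinning down, uniformly in $\mu$, the number of zeros and the crossing directions. The monotone/unimodal dichotomy makes this routine once one has it, so the crux is the sign-count for the cubic $g$ and the careful reconciliation with the curves $\mu_{\pm}$ to identify the precise thresholds $\mu_{k}$, $m_{0}$, $m_{+}$; keeping straight the dependence on the sign of $\gamma_{k}$ — equivalently on whether $k\lessgtr n/2$ — is what makes it convenient to route the large-$k$ cases through the reflection symmetry rather than redo the estimates.
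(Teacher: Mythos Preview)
Your approach is correct and in fact proves more than the paper does. The paper's proof is brief: it argues that $n_{k}=1$ where $d_{k}<0$ and $n_{k}=0$ where $d_{k}>0$ by continuation from $\nu=\infty$ (rather than via the trace, as you do), and then simply reads off the crossings from the graph of the curves $\mu_{\pm}(\nu)$. It does \emph{not} establish an exact count of zeros; indeed, the Remark immediately after the corresponding Theorem states outright that ``if we had proved that there are only three solutions for $d_{k}(\nu)=0$ and $d_{k}'(\nu)=0$, then\ldots the only points where the Morse index changes are those of the previous theorem.'' Your unimodality argument for $k\le n/2$ (via the cubic $g$) actually supplies this missing ingredient, so your Step~3 is a genuine strengthening.

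One point to tighten in Step~4: the phrase ``running the Step~3 analysis for the block $n-k$ on the negative-frequency axis'' is misleading. For $k'=n-k\le n/2$ one has $g_{k'}(0)=-2\gamma_{k'}\le 0$ and $g_{k'}(-\infty)=-\infty$, so $g_{k'}$ can have \emph{two} negative roots and $d_{k'}$ need not be unimodal on $(-\infty,0)$. What actually carries the argument for $k>n/2$ is simpler: the first case follows from the intermediate value theorem applied directly to $d_{k}$ (since $d_{k}(0)<0$ and $d_{k}(+\infty)>0$); the third case follows because $\mu>m_{+}^{(k)}=m_{-}^{(k')}>m_{+}^{(k')}\ge m_{0}^{(k')}$ (the last inequality being your Step~3 result for $k'$), which forces $\mu>\mu_{+}(\nu)$ everywhere and hence the two zeros in $(0,1)$ come from $\mu_{-}$ with the $(+,-,+)$ sign pattern you need. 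So the reflection symmetry is used, but only to transport the inequality $m_{0}\le m_{+}$, not the unimodality itself.
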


\begin{proof}
Since $\mu_{k}=-b_{k}/a_{k}$ is the only zero of $d_{k}(\mu,0)$, then $\mu
_{k}\leq m_{0}$. Notice that the function $d_{k}(\nu)+4\omega\gamma_{k}\nu$ is
even in $\nu$, then%
\[
d_{k}(\mu,\nu)=d_{k}(\mu,-\nu)-8\omega\gamma_{k}\nu.
\]
Since $\gamma_{k}>0$ for $k\in\lbrack2,...,n/2)\cap\mathbb{N}$, then
$d_{k}(\mu,\nu)<d_{k}(\mu,-\nu)$ for $\nu\in\mathbb{R}^{+}$. Thus, the maximum
$m_{0}$ is reached for some $\nu>0$, and $m_{+}<m_{-}$.

Since $m_{k}(\nu)$ is a $2\times2$ matrix, then the Morse number is $n_{k}%
(\nu)=1$ when $d_{k}(\nu)$ is negative. Hence, the Morse index is $n_{k}%
(\mu,\nu)=1$ in $\Omega$, where
\[
\Omega=\{(\mu,\nu):\mu<\mu_{0}(\nu),\mu>\mu_{\pm}(\nu)\}.
\]
Since $n_{k}(\nu)=0$ for $\nu$ large enough, and since $m_{k}(\nu)$ changes
its Morse index only at $\partial\Omega$, then $n_{1}(\mu,\nu)=0$ in
$\bar{\Omega}^{c}$.

As we have seen before, $\sigma=1$ for $\mu>-s_{1}$, then, for the first case,
we have that $\eta_{k}(\nu_{k})=1-0$. For the other two cases, there are two
values, $\bar{\nu}_{k}<\nu_{k}$, with $\eta_{k}(\bar{\nu}_{k})=0-1$ and
$\eta_{k}(\nu_{k})=1-0$.
\end{proof}

\begin{remark}
Since
\[
d_{k}^{\prime\prime}(\nu)=2(6\omega^{2}\nu^{2}+(2\alpha_{k}-\mu-2s_{1}%
)\omega),
\]
then $d_{k}^{\prime\prime}(\nu)$\ is positive for $\mu<2(\alpha_{k}-s_{1})$.
Thus, for $\mu\in(-s_{1},2(\alpha_{k}-s_{1}))$ the matrix $m_{k}(\nu)$ changes
its Morse index only at $\nu_{k}$, for the first case. Now, since $d_{k}(\nu)$
is a polynomial in $\nu$ of degree $4$, then $d_{k}(\nu)$ has at most four
zeros. This is the case for $\mu>\max\{m_{+},m_{-}\}$, and then the two
positive solutions for the third case are the only ones.
\end{remark}

From the bifurcation theorem we have the following result, for $n\geq4$.

\begin{theorem}
\label{E6.3.1}The polygonal equilibrium has a global bifurcation of planar
periodic solutions with symmetries $\mathbb{Z}_{n}\left(  \zeta,\zeta
,-k\zeta\right)  $ for each $k\in\{2,...,n-2\}$ and $\mu\in(-s_{1},\mu_{k})$.
When $\mu\in(-s_{1},2(\alpha_{k}-s_{1}))$, these bifurcation branches are
non-admissible or go to another equilibrium .

For each $k\in\{2,...,[n/2]\}$ and $\mu\in(\mu_{k},m_{0})$, and for each
$k\in\{2,...,n-2\}$ and $\mu\in(m_{+},\infty)$, the polygonal equilibrium has
two global bifurcations of planar periodic solutions with symmetries
$\mathbb{Z}_{n}\left(  \zeta,\zeta,-k\zeta\right)  $.
\end{theorem}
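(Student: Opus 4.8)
The plan is to read the theorem off from the preceding Proposition on the Morse index of $m_{k}(\nu)$ together with the bifurcation theorem for the polygonal equilibrium (the statement in terms of $\eta_{jk}$, taken with $j=0$), once the hypotheses of the orthogonal degree are checked on each of the three parameter intervals. Throughout I keep, as in that bifurcation theorem, $\mu\in(-s_{1},\infty)\setminus\{\mu_{1},...,\mu_{n-1}\}$, so that every block $m_{0k'}(0)=B_{k'}$ with $k'=1,...,n-1$ is invertible (\cite{GaIz11}, Proposition~23); together with $\ker m_{0n}(0)=\ker B_{n}=\langle e_{2}\rangle=\langle T_{n}^{-1}(A_{1}\bar{a})\rangle$ (recall $B_{n}=diag((\alpha+1)(\mu+s_{1}),0)$) and $\ker(M_{1}(0)|_{e^{\perp}})=0$ (for $\mu\neq0$; the degenerate case $\mu=0$ being handled by deleting the then-trivial spatial coordinate as noted above), this gives the hyperbolicity of the orbit of $\bar{a}$ near a bifurcation point required by \cite{IzVi03}, Proposition~3.2. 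I would also recall that $\sigma=\sigma_{n}(\mu)$ is the sign of $(\alpha+1)(\mu+s_{1})$, i.e.\ $1$ for all $\mu>-s_{1}$, so that each nonzero $\eta_{k}$ carries the sign of the corresponding Morse-index change, and that the frequency has been rescaled by $\sqrt{\omega}$, so a change point $\nu_{k}$ of the rescaled determinant corresponds to a bifurcating period $2\pi/(\sqrt{\omega}\nu_{k})$. Since $s_{1}<\mu_{k}\le m_{0}$, each of the intervals $(-s_{1},\mu_{k})$, $(\mu_{k},m_{0})$ and $(m_{+},\infty)$ is (generically) nonempty.

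For $k\in\{2,...,n-2\}$ and $\mu\in(-s_{1},\mu_{k})$, the preceding Proposition gives a single positive value $\nu_{k}$ with $\eta_{k}(\nu_{k})=1\neq0$, whence the bifurcation theorem (case $j=0$) produces a global branch of periodic solutions with isotropy group $\mathbb{\tilde{Z}}_{n}(k)\times\mathbb{Z}_{2}$, planar because of the $\mathbb{Z}_{2}$ factor and carrying the symmetries $u_{j}(t)=e^{-i\zeta}u_{\zeta(j)}(t-k\zeta)$ recorded in the section on symmetries. If moreover $\mu<2(\alpha_{k}-s_{1})$, the Remark following that Proposition gives $d_{k}''(\nu)>0$, so $d_{k}$ is a strictly convex quartic with at most two real roots; since $d_{k}(\mu,0)=a_{k}+\mu b_{k}$ vanishes only at $\mu_{k}$ and, as $b_{k}>0$, is negative for $\mu<\mu_{k}$, while $d_{k}(\nu)\to+\infty$ as $|\nu|\to\infty$, exactly one root is positive; hence $\nu_{k}$ is the \emph{only} frequency at which block $k$ changes its Morse index. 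As $\eta_{k}(\nu_{k})=1$, the orthogonal degrees over bifurcation points of this orbit type cannot sum to zero, so by the global alternative the branch is non-admissible (norm or period unbounded, or collision) or else limits onto another relative equilibrium. For $k\in\{2,...,[n/2]\}$ and $\mu\in(\mu_{k},m_{0})$, and for $k\in\{2,...,n-2\}$ and $\mu\in(m_{+},\infty)$, the Proposition instead gives two positive values $\bar{\nu}_{k}<\nu_{k}$ with $\eta_{k}(\bar{\nu}_{k})=-1$ and $\eta_{k}(\nu_{k})=1$; each being nonzero, the bifurcation theorem produces a global branch from each, of the same isotropy type. Here the two jumps already cancel, so no non-admissibility is asserted: the degree theory allows the two branches to be admissible and to join one another.

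The substance of the argument lies not in this assembly but in the preceding Proposition and Lemma — the sign analysis of the quartic $d_{k}(\nu)$, the estimate $b^{2}(\nu)>9c$, and the location of the curves $\mu_{+}(\nu)$ and $\mu_{-}(\nu)$ — which I take as given. Within the present step the one delicate point is the bookkeeping behind the non-admissibility refinement: I must invoke the convexity Remark to be certain that block $k$ has no second Morse-index change hiding at another positive frequency, so that the vanishing of the sum of the jumps is genuinely obstructed and the global branch is forced to be non-admissible or to run into another relative equilibrium. That, rather than any new computation, is where a slip would be easy; everything else is a direct application of the two cited results.
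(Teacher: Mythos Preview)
Your proposal is correct and follows the paper's own approach: the paper states the theorem immediately after the Proposition on the Morse-index changes of $m_{k}(\nu)$ and the Remark on convexity, prefaced only by ``From the bifurcation theorem we have the following result,'' so the argument is exactly the assembly you describe --- apply the bifurcation theorem (case $j=0$) to each $\eta_{k}\neq0$ produced by the Proposition, and for the non-admissibility clause use the Remark's uniqueness of $\nu_{k}$ when $\mu<2(\alpha_{k}-s_{1})$ to block the zero-sum alternative. Your added bookkeeping (checking hyperbolicity away from the $\mu_{k}$'s, recalling $\sigma=1$, and noting that the negative root of $d_{k}$ belongs to block $n-k$ via $m_{0(n-k)}(\nu)=\bar m_{0k}(-\nu)$) is a faithful expansion of what the paper leaves implicit.
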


\subsubsection{Blocks $k\in\{1,n-1\}$}

From the definition, the block $m_{1}(\nu)$ is%
\[
\left(
\begin{array}
[c]{ccc}%
\mu\left(  \omega\nu^{2}-2\nu\omega+s_{1}+\mu+n/2\right)  & -2\left(
n/2\right)  ^{1/2}\mu & -\left(  n/2\right)  ^{1/2}\mu i\\
-2\left(  n/2\right)  ^{1/2}\mu & \omega\nu^{2}+s_{1}+\alpha_{1}+3\mu &
\alpha_{1}i+2\nu\omega i\\
\left(  n/2\right)  ^{1/2}\mu i & -\alpha_{1}i-2\nu\omega i & \omega\nu
^{2}+s_{1}+\alpha_{1}%
\end{array}
\right)  \text{.}%
\]
Let us define $a_{1}$ and $b_{1}$ as%
\[
a_{1}=(2s_{1}+n)(2s_{1}+s_{2})/4\text{ and }b_{1}=3(4s_{1}+s_{2}-2n)/4.
\]
The determinant of $m_{1}(\nu)$ may be written as%
\[
\det m_{1}(\nu)=\omega\mu(\nu-1)^{2}d_{1}(\nu),
\]
where $d_{1}(\nu)$ is the polynomial
\[
d_{1}(\nu)=\nu^{2}(\nu^{2}-1)\omega^{2}+\left[  \left(  s_{2}-2s_{1}+n\right)
\nu^{2}/2-(s_{2}-n)\nu\right]  \omega+(a_{1}+\mu b_{1})\text{.}%
\]

Therefore, we need to find the points where $d_{1}(\nu)$ is zero.

\begin{lemma}
\label{5EnLe}The functions $b_{1}$, $2\alpha_{1}-s_{1}$, $s_{1}-n$, $s_{2}-n $
and $8(s_{1}-n)+9s_{2}$ are positive, at least for $n>1071$. For $n\geq3$, we
get numerically that $s_{1}-n$ is negative if $n\leq472$ and positive if
$n\geq473$, that $s_{2}-n$ is negative if $n\leq11$ and positive if $n\geq12$,
and
\[
sgn(b_{1})=sgn(2\alpha_{1}-s_{1})=sgn(8(s_{1}-n)+9s_{2})=\left\{
\begin{array}
[c]{c}%
-1\text{ for }n\leq6\\
+1\text{ for }n\geq7
\end{array}
\right.  \text{.}%
\]

\end{lemma}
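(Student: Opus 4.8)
The first step is to specialise to the Newton case $\alpha=2$ in force here, so that $\zeta/2=\pi/n$ and every quantity in the lemma can be written through just two elementary expressions,
\[
s_1=\tfrac14\sum_{j=1}^{n-1}\frac1{\sin(j\pi/n)},\qquad C_n:=\cot\!\Big(\tfrac{\pi}{2n}\Big)=\sum_{j=1}^{n-1}\sin\!\big(\tfrac{j\pi}{n}\big),
\]
the last being the standard evaluation $\sum_{j=1}^{n-1}\sin(j\pi/n)=\cot(\pi/2n)$. Using $\sin(2x)=2\sin x\cos x$ and $\cos^2=1-\sin^2$ gives $s_2=\tfrac14\sum_j\sin^{-3}(j\pi/n)\sin^2(2j\pi/n)=\sum_j\cos^2(j\pi/n)/\sin(j\pi/n)=4s_1-C_n$; and since $s_0=0$ and $\alpha_-=\tfrac12$, one has $\alpha_1=\alpha_-(s_2+s_0)/2=s_2/4$. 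Consequently
\[
2\alpha_1-s_1=\tfrac12 s_2-s_1=s_1-\tfrac12 C_n,\quad b_1=\tfrac34(4s_1+s_2-2n)=\tfrac34(8s_1-C_n-2n),\quad 8(s_1-n)+9s_2=44s_1-9C_n-8n,
\]
so all five functions are explicit affine expressions in $s_1$, $C_n$, $n$, and the whole statement reduces to comparing $s_1$ with $n$, with $C_n$ only a mild correction (note $C_n<2n/\pi$, since $\tan x>x$ on $(0,\pi/2)$).

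\textbf{The key estimate.} The one genuinely analytic step is a sharp enough lower bound for $s_1$. I would exploit the symmetry $j\leftrightarrow n-j$ of $\csc(j\pi/n)$: one has $\sum_{j=1}^{n-1}\csc(j\pi/n)\ge 2\sum_{j=1}^{\lfloor(n-1)/2\rfloor}\csc(j\pi/n)$ (when $n$ is even the extra central term $\csc(\pi/2)=1$ only helps), and on the surviving range $0<j\pi/n<\pi/2$, so $\sin(j\pi/n)<j\pi/n$ and hence $\csc(j\pi/n)>n/(j\pi)$; combining this with $H_m=\sum_{j\le m}1/j>\ln(m+1)$ gives
\[
s_1=\tfrac14\sum_{j=1}^{n-1}\frac1{\sin(j\pi/n)}>\frac{n}{2\pi}\,H_{\lfloor(n-1)/2\rfloor}>\frac{n}{2\pi}\,\ln\frac n2 .
\]
The $j\leftrightarrow n-j$ folding is essential: the unfolded bound $s_1>\frac{n}{4\pi}\ln n$ is too weak to locate the threshold precisely.

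\textbf{Conclusion for large $n$.} The quantity $\frac{n}{2\pi}\ln(n/2)$ exceeds $n$ exactly when $\ln(n/2)>2\pi$, i.e. $n>2e^{2\pi}=1070.98\dots$; hence $s_1>n$ for every integer $n\ge1071$. Then at once $s_1-n>0$; $s_2=4s_1-C_n>4n-2n/\pi>n$, so $s_2-n>0$; $2\alpha_1-s_1=s_1-\tfrac12C_n>n-n/\pi>0$; $b_1=\tfrac34(8s_1-C_n-2n)>\tfrac34(8n-2n/\pi-2n)>0$; and $8(s_1-n)+9s_2>0$ because both summands are now positive. Since the other four inequalities in fact persist for much smaller $n$, the binding constraint is $s_1-n$, and this is precisely why the first assertion has the threshold $n>1071$.

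\textbf{Small $n$ and the stated thresholds.} For $3\le n\le1071$ the sums $s_1,s_2$ are finite combinations of trigonometric values, and the remaining claims are verified by direct evaluation: $s_1-n<0$ for $3\le n\le472$ and $>0$ for $473\le n\le1071$; $s_2-n<0$ exactly for $3\le n\le11$; and $b_1$, $2\alpha_1-s_1$, $8(s_1-n)+9s_2$ are simultaneously negative for $3\le n\le6$ and simultaneously positive for $7\le n\le1071$. Together with the large-$n$ range this yields the announced sign patterns. The only real labour is this finite numerical check; the sole analytic obstacle—making the lower bound on $s_1$ sharp enough to land exactly on $n=1071$—is dispatched by the symmetrised estimate above.
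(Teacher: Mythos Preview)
Your proof is correct and follows essentially the same route as the paper: symmetrise the sum defining $s_1$ under $j\leftrightarrow n-j$, use $\sin x<x$ on the half-range, and bound the resulting harmonic sum by $\ln(n/2)$ to get $s_1>\frac{n}{2\pi}\ln(n/2)$, which forces $s_1>n$ once $n>2e^{2\pi}\approx1071$; the remaining four quantities then follow, and the small-$n$ claims are left to direct numerical evaluation. The only cosmetic difference is that you identify $\bar s_1=\sum_{j}\sin(j\pi/n)$ with the closed form $\cot(\pi/2n)$ and bound it by $2n/\pi$, whereas the paper simply uses the cruder (but sufficient) bound $\bar s_1<n$ coming from $\sin<1$.
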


\begin{proof}
Since $\sin x<x$, then%
\[
s_{1}=\frac{1}{4}\sum_{j=1}^{n-1}\frac{1}{\sin j\zeta/2}\geq\frac{1}{2}%
\sum_{j\in\lbrack1,n/2)\cap\mathbb{N}}\frac{1}{j\zeta/2}\geq\frac{1}{\zeta}%
\ln(n/2)=n\frac{\ln(n/2)}{2\pi}.
\]
Since $\ln(n/2)\geq2\pi$ for $n>1071$, then $s_{1}>n$ at least for $n>1071 $.

Using that $s_{2}>s_{1}$ and $b_{1}=4(2(s_{1}-n)+s_{2}+2s_{1})/4$, we conclude
that $b_{1}$, $s_{2}-n$ and $8(s_{1}-n)+9s_{2}$ are positive for $n>1071$.
Finally, from proposition (\ref{EA.1.0}), we have that $s_{2}=4s_{1}-\bar
{s}_{1}$ with $\bar{s}_{1}=\sum_{j=1}^{n-1}\sin(j\zeta/2)<n $, then,
\[
2\alpha_{1}-s_{1}=s_{2}/2-s_{1}=s_{1}-\bar{s}_{1}/2>s_{1}-n\geq0
\]
for $n>1071$.
\end{proof}

\begin{figure}[th]
\centering
\subfloat[$n>6$] {\begin{pspicture}(-1,-2)(3,2)\SpecialCoor
\psline[linestyle=dashed]{->}(0,-2)(0,2)
\psline[linestyle=dashed]{->}(-.5,0)(3,0)
\psline[linestyle=dashed](.5,1)(3,1)
\psline[linestyle=dashed](.5,-1)(3,-1)
\pscurve(3,.9)(1.7,.8)(1.2,.5)(1.7,.2)(3,.1)
\pscurve(3,-.9)(2,-.5)(3,-.1)
\rput[l](3,0){\small $\mu$}
\rput[r](0,2){\small $\nu$}
\rput[rt](1,1.5){\tiny $(+)$}
\rput[l](2.5,.5){\tiny $(-)$}
\rput[l](2.5,-.5){\tiny $(-)$}
\NormalCoor
\end{pspicture}}
\subfloat[$n=3,4,5,6$] {\begin{pspicture}(-1,-2)(3,2)\SpecialCoor
\psline[linestyle=dashed]{->}(0,-2)(0,2)
\psline[linestyle=dashed]{->}(-.5,0)(3,0)
\psline[linestyle=dashed](.5,1)(3,1)
\psline[linestyle=dashed](.5,-1)(3,-1)
\pscurve(3,.9)(2.5,.8)(1.3,0)(1,-.5)(1.3,-1)(2,-1.3)(2.5,-1.2)(3,-1.1)
\rput[l](3,0){\small $\mu$}
\rput[r](0,2){\small $\nu$}
\rput[rt](1,1.5){\tiny $(+)$}
\rput[l](2.5,.5){\tiny $(-)$}
\NormalCoor\end{pspicture}}\caption{Graph of $d_{1}(\mu,\nu)=0$.}%
\end{figure}
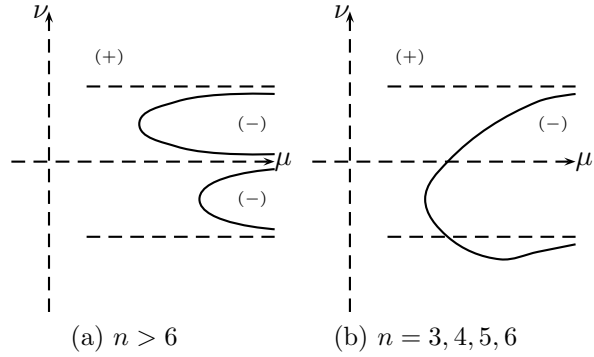

We shall analyze the graph of $d_{1}(\nu)=0$ only for positive masses, $\mu>0
$.

\begin{proposition}
For $n\geq7$, the polynomial $d_{1}(\mu,\nu)$ is zero only for the function%
\[
\mu_{0}(\nu):(-1,0)\cup(0,1)\rightarrow\mathbb{R}^{+}\text{,}%
\]
where $\mu_{0}(\nu)\rightarrow\infty$ when $\left\vert \nu\right\vert
\rightarrow0,1$ .

For $n\in\{3,4,5,6\}$, the polynomial $d_{1}(\mu,\nu)$ is zero only on the
curve
\[
(\mu_{0},\nu_{0}):\mathbb{R\rightarrow R}^{+}\times(-\infty,1)\text{,}%
\]
where $\mu_{0}(t)\rightarrow\infty$ and $\nu_{0}(t)\rightarrow\pm1$ when
$t\rightarrow\pm\infty$.
\end{proposition}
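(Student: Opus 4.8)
The plan is to follow the $\omega$-picture already used for the blocks $k\in\{2,\dots,n-2\}$: with $\omega=\mu+s_{1}$ write $d_{1}(\nu)=a\omega^{2}+b\omega-c$, where
\[
a=\nu^{2}(\nu^{2}-1),\qquad b=\tfrac12(s_{2}-2s_{1}+n)\nu^{2}-(s_{2}-n)\nu+b_{1},\qquad 4c=(s_{1}-n)(8s_{1}+s_{2}),
\]
the last identity coming from a short computation with $a_{1}=\tfrac14(2s_{1}+n)(2s_{1}+s_{2})$ and $b_{1}=\tfrac34(4s_{1}+s_{2}-2n)$. The zeros of $d_{1}$ are then cut out by the two branches $\omega_{\pm}(\nu)=(-b\pm\sqrt{b^{2}+4ac})/(2a)$, and the curves of interest in the $(\mu,\nu)$-plane are their translates $\mu_{\pm}=\omega_{\pm}-s_{1}$ restricted to $\mu>0$; since $\det m_{1}(\nu)=\omega\mu(\nu-1)^{2}d_{1}(\nu)$ with positive prefactor for $\mu>0,\ \nu\neq1$, this is exactly the set the Proposition describes.

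The first step is to locate where this algebraic curve meets the three lines $\nu\in\{-1,0,1\}$, at which $a=0$ and $d_{1}=b\omega-c$ is linear: solving $\omega=c/b$ and subtracting $s_{1}$ gives the clean values $\mu|_{\nu=1}=-n$, $\mu|_{\nu=0}=-a_{1}/b_{1}$, and $\mu|_{\nu=-1}=-s_{2}(8s_{1}+n)/\bigl(9s_{2}+8(s_{1}-n)\bigr)$. By Lemma \ref{5EnLe}, $b_{1}$ and $9s_{2}+8(s_{1}-n)$ are positive for $n\ge7$ and negative for $n\in\{3,4,5,6\}$; hence all three crossing values are negative when $n\ge7$, while for $n\in\{3,4,5,6\}$ the curve still meets $\nu=1$ at $\mu=-n<0$ but meets $\nu=0$ and $\nu=-1$ at positive $\mu$. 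This single change of sign is the origin of the two pictures: for $n\ge7$ the zero set in $\{\mu>0\}$ cannot enter through $\nu=0$ or $\nu=\pm1$, whereas for $n\le6$ it is forced to run from $0<\nu<1$ across $\nu=0$ and $\nu=-1$ into $\nu<-1$.

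For $n\ge7$ I would show that $\{d_{1}=0,\ \mu>0\}$ is confined to $0<|\nu|<1$ and consists of exactly one branch $\mu_{0}(\nu)$ over $(0,1)$ and one over $(-1,0)$. On $|\nu|>1$ one has $a>0$, the $\omega$-parabola opens upward, and the absence of a root $\omega>s_{1}$ reduces to controlling the sign of $d_{1}$ at $\omega=s_{1}$, i.e.\ of the quartic $K(\nu)=d_{1}|_{\mu=0}$, together with the sign of $B(\nu)=2as_{1}+b$; both are handled through the bounds and asymptotics for $s_{1},s_{2}$ proved in the appendix (here the sign of $c$ itself, controlled by that of $s_{1}-n$, must be treated separately for $n\le472$ and $n\ge473$). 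On $0<|\nu|<1$ one has $a<0$: the downward parabola in $\omega$ then has exactly one root on the correct side of $s_{1}$, again after checking a sign condition on $K$, which yields the single positive branch $\mu_{0}$; and $\mu_{0}(\nu)\to\infty$ as $a\to0^{-}$, i.e.\ as $|\nu|\to0,1$, because the finite crossing values $-n,\,-a_{1}/b_{1},\,-s_{2}(8s_{1}+n)/(9s_{2}+8(s_{1}-n))$ there are strictly negative and so the positive root must escape as the leading coefficient of the $\mu$-quadratic degenerates. Since $-a_{1}/b_{1}<0$ the two branches do not join at $\nu=0$, so this is exactly the function $\mu_{0}\colon(-1,0)\cup(0,1)\to\mathbb{R}^{+}$ blowing up at the endpoints.

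For $n\in\{3,4,5,6\}$ the branch emanating from $(\mu\to\infty,\,\nu\to1^{-})$ over $(0,1)$ must, by the crossing computation, pass through the finite positive value $-a_{1}/b_{1}$ at $\nu=0$, continue single-valued across $(-1,0)$ to the finite positive value $-s_{2}(8s_{1}+n)/(9s_{2}+8(s_{1}-n))$ at $\nu=-1$, and enter $\nu<-1$, where $a>0$; there it runs out to a turning point $\nu_{\min}<-1$ at which the two $\mu$-roots of $d_{1}$ coalesce, with no zeros beyond it, the second sheet returning toward $\nu\to-1^{-}$ with $\mu\to\infty$. Gluing these pieces by the implicit function theorem away from the critical points of $d_{1}$ gives the single $C^{1}$ arc $(\mu_{0},\nu_{0})\colon\mathbb{R}\to\mathbb{R}^{+}\times(-\infty,1)$ with $\mu_{0}(t)\to\infty$, $\nu_{0}(t)\to1$ as $t\to+\infty$ and $\mu_{0}(t)\to\infty$, $\nu_{0}(t)\to-1$ as $t\to-\infty$. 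The hard part throughout is the bookkeeping: extracting the signs of $b_{1}$, $2\alpha_{1}-s_{1}$, $s_{1}-n$, $s_{2}-n$ and $8(s_{1}-n)+9s_{2}$ in each range of $n$ from Lemma \ref{5EnLe} (awkward because $s_{1}-n$ and $s_{2}-n$ change sign at very different values of $n$), establishing the positivity of $K$ and the needed sign of $B$ on the relevant $\nu$-intervals, and — for $n\le6$ — verifying that $d_{1}$ has no further critical points in $\{\mu>0\}$, so that the zero set is precisely the single arc and not that arc together with extra ovals.
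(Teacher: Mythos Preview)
Your outline is workable but takes a noticeably harder route than the paper. You follow the $\omega$-quadratic template from the blocks $k\in\{2,\dots,n-2\}$, which forces you to carry the constant $4c=(s_{1}-n)(8s_{1}+s_{2})$ whose sign flips at $n=472/473$; this is why you end up splitting cases there and flagging ``the hard part is the bookkeeping.'' The paper instead expands $d_{1}$ as a quadratic in $\mu$ and observes that its constant term---your $K(\nu)=d_{1}(0,\nu)$---\emph{factors}:
\[
d_{1}(0,\nu)=c_{1}(\nu)=\tfrac14\bigl[2s_{1}(\nu-1)^{2}+n\bigr]\bigl[2s_{1}(\nu+1)^{2}+s_{2}\bigr].
\]
This single identity makes $K>0$ immediate for all $\nu$ and all $n\ge3$, with no appeal to the sign of $s_{1}-n$, and it also makes the monotonicity of $c_{1}$ for $|\nu|>1$ transparent. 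From there the paper argues directly: for $|\nu|\in(0,1)$ the downward $\mu$-parabola has maximum $c_{1}-\nu^{2}(\nu^{2}-1)\mu_{c}^{2}>0$ and value $c_{1}>0$ at $\mu=0$, so exactly one positive root $\mu_{0}(\nu)$; the blow-up at $|\nu|\to0,1$ for $n\ge7$ follows because the crossing values at $\nu=0,\pm1$ are all negative (exactly your computation), and for $n\le6$ the paper handles the piece in $\nu<-1$ by a short sign/continuum argument rather than locating a turning point explicitly.

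So nothing in your plan is wrong, and your crossing-value analysis at $\nu\in\{-1,0,1\}$ matches the paper's exactly. The substantive difference is the factorization of $c_{1}(\nu)$: it eliminates your case split on $\operatorname{sgn}(s_{1}-n)$ and turns the ``positivity of $K$'' and ``sign of $B$'' checks you anticipate into one line. If you want to complete your version, the cleanest fix is simply to compute $d_{1}(0,\nu)$ and discover that factorization; everything else in your outline then goes through without the $n\le472$ versus $n\ge473$ detour.
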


\begin{proof}
Since $\omega=\mu+s_{1}$,%
\[
d_{1}(\mu,\nu)=\nu^{2}\left(  \nu^{2}-1\right)  \mu^{2}+[s_{1}b_{1}%
-a_{1}+c_{1}+s_{1}^{2}\left(  \nu^{4}-\nu^{2}\right)  ]\mu/s_{1}+c_{1}\text{,}%
\]
where $s_{1}b_{1}-a_{1}=\left(  8s_{1}+s_{2}\right)  \left(  s_{1}-n\right)
/4$ and
\[
c_{1}(\nu)=[2s_{1}(\nu-1)^{2}+n][2s_{1}(\nu+1)^{2}+s_{2}]/4\text{.}%
\]

For each fixed $\nu$, set $d_{1}(\nu)$ be the value of the determinant. Then,
its derivative $d_{1}^{\prime}(\mu)$ is zero at the only critical point
$\mu_{c}$ such that%
\[
\frac{1}{s_{1}}(s_{1}b_{1}-a_{1}+c_{1}+s_{1}^{2}\left(  \nu^{4}-\nu
^{2}\right)  )\mu_{c}=-2\nu^{2}\left(  \nu^{2}-1\right)  \mu_{c}^{2}\text{.}%
\]
For $\left\vert \nu\right\vert <1$, since $d_{1}(\mu_{c},\nu)=-\nu^{2}\left(
\nu^{2}-1\right)  \mu_{c}^{2}+c_{1}$ is positive, then the polynomial
$d_{1}(\mu)$ has at most one zero for $\mu>\mu_{c}$. Since $d_{1}/\omega
^{2}\rightarrow\nu^{2}(\nu^{2}-1)$ when $\mu\rightarrow\infty$, and
$d_{1}(0,\nu)=c_{1}$ is positive, then the polynomial $d_{1}$ has exactly one
zero at $\mu_{0}(\nu)>0$ for each $\left\vert \nu\right\vert \in(0,1)$.

In order to obtain the limits of the function $\mu_{0}(\nu)$, when $\left\vert
\nu\right\vert \rightarrow0,1$, we need to study the zeros of $d_{1}(\mu,\nu)$
for $\left\vert \nu\right\vert =0,1$. For $\nu=0$, the polynomial $d_{1}%
(\mu,0)$ is zero at $\mu_{0}=-a_{1}/b_{1}$. Since $a_{1}$ is positive and
$b_{1}$ is negative for $n\leq6$ only, then $\mu_{0}$ is positive for $n\leq6$
only. For $\nu=1$, the polynomial $d_{1}(\mu,1)$ is zero at the point
$\mu_{+1}=-n$. For $\nu=-1$, the polynomial $d_{1}(\mu,-1)$ is zero at
\[
\mu_{-1}=-s_{2}(8s_{1}+n)/(8s_{1}-8n+9s_{2})\text{,}%
\]
which is positive for $n\leq6$ only.

Hence, for $n\geq7$, the polynomial $d_{1}(\mu,\nu)$ is positive at
$\left\vert \nu\right\vert =0,1$, for any $\mu\geq0$. Therefore, the functions
$\mu_{0}(\nu)\rightarrow\infty$ when $\left\vert \nu\right\vert \rightarrow
0,1$. Moreover, since $c_{1}(\nu)$ is increasing for $\nu>1$, then $d_{1}%
(\nu)\geq d_{1}(1)>0$, for $\nu>1$. Similarly, one has that $d_{1}(\nu)>0$,
for $\nu<-1$. We conclude that the only zeros of $d_{1}(\mu,\nu)$ for $\mu
\geq0$ are for the function $\mu_{0}(\nu)$.

For $n\leq6$, we may prove, in a similar way, that $d_{1}(\mu,\nu)$ is
positive, for $\nu>1$. As a consequence, the function $\mu_{0}(\nu
)\rightarrow\infty$ when $\nu\rightarrow1$. Also, the function $\mu_{0}(\nu)$
has a continuous extension, to $\nu=0,-1$, by $\mu_{0}(0)=\mu_{0}$ and
$\mu_{0}(-1)=\mu_{-1}$. Furthermore, the continuum of zeros of $d_{1}(\mu
,\nu)$ at $(\mu_{-1},-1)$ crosses the line $(\mu,-1)$ only once, then this
continuum must go to $(\mu,\nu)=(\infty,-1)$. This statement follows from the
fact that $d_{1}(\mu,\nu)\ $ is positive for $\nu<-1$ and $\mu=0$, for $\mu>0$
and $\left\vert \nu\right\vert $ large, and for $\nu<-1$ and $\mu$ large.
\end{proof}

\begin{remark}
The equation $d_{1}(\mu,\nu)=0$ is quadratic in $\mu$, thus we can find
explicitly two roots $\mu_{\pm}(\nu)$of $d_{1}(\mu,\nu)=0$. In principle, we
could analyze these roots analogously to the case $k\in\{2,...,n-2\}$.
However, there are many cases depending on $n$. The main case is for
$n\geq473$, where we may prove that the graph of $d_{1}(\mu,\nu)=0$ looks like
the graph of $d_{k}(\mu,\nu)=0$, for $\mu>-s_{1}$.
\end{remark}

\begin{definition}
For $n\geq7$, define $m_{+}$ to be the minimum of the function $\mu_{0}(\nu)$
in $(0,1)$, and $m_{-}$ the minimum in $(-1,0)$. For $n\in\{3,...,6\}$ define
$m_{0}$ to be the minimum of $\mu_{0}(t)$ for $t\in\mathbb{R}$.
\end{definition}

\begin{proposition}
For masses $\mu>0$ we have the following cases:

\begin{description}
\item[(a)] For $n\in\{3,...,6\}$, the matrix $m_{1}(\nu)$ changes its Morse
index at the two values $\nu_{-}<\bar{\nu}_{-}<0$ if $\mu\in(m_{0},\mu_{1})$,
and at $\nu_{-}<0<\nu_{+}<1$ if $\mu\in(\mu_{1},\infty)$.

\item[(b)] For $n\geq12$, the matrix $m_{1}(\nu)$ changes its Morse index at
$0<\bar{\nu}_{+}<\nu_{+}<1$ if $\mu\in(m_{+},m_{-})$, and at the four values
of $\nu$
\[
-1<\nu_{-}<\bar{\nu}_{-}<0<\bar{\nu}_{+}<\nu_{+}<1
\]
if $\mu\in(m_{-},\infty)$. For $n\in\{7,...,11\}$, this statement is true with
$m_{+}$ and $m_{-}$ interchanged.
\end{description}

In both cases, we have that%
\[
\eta_{1}(\bar{\nu}_{\pm})=-1\text{ and }\eta_{1}(\nu_{\pm})=1.
\]

\end{proposition}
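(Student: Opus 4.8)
The plan is to reduce the statement to the sign behaviour of the single scalar polynomial $d_1(\mu,\nu)$, use the description of its zero set obtained in the preceding proposition, and control the number and location of the sign changes by a convexity estimate for $d_1$ as a polynomial in $\nu$.

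\textbf{First step.} The Morse number $n_1(\nu)$ of the Hermitian $3\times3$ matrix $m_1(\nu)$ can change only at values of $\nu$ where $m_1(\nu)$ is singular, i.e. where $\det m_1(\nu)=\omega\mu(\nu-1)^2 d_1(\nu)=0$. Since $\omega=\mu+s_1>0$ and $\mu>0$, and since $d_1(\mu,1)>0$ for every $\mu>0$ (the only zero of $d_1(\cdot\,,1)$ is at $\mu=-n$), the point $\nu=1$ contributes only the spurious double factor $(\nu-1)^2$: a short computation shows $m_1(1)$ has a one-dimensional kernel, so exactly one eigenvalue of $m_1$ vanishes at $\nu=1$, with a double contact, and $n_1$ is unchanged across $\nu=1$. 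Hence the changes of $n_1$ occur precisely at the real zeros of $\nu\mapsto d_1(\mu,\nu)$, and at a simple zero the change is by $\pm1$. I would then fix the baseline: for $0<\mu<m_0$ (resp. $0<\mu<\min(m_+,m_-)$) the horizontal line $\{\mu=\mathrm{const}\}$ misses the zero set of $d_1$, so $d_1>0$ everywhere ($d_1\to+\infty$ as $\nu\to\pm\infty$), hence $\det m_1\ge0$ and $n_1\equiv0$; together with $n_1(\pm\infty)=0$ this anchors $n_1$ at $0$ outside the set $\{d_1<0\}$.

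\textbf{Main step.} For fixed $\mu>0$, $d_1(\mu,\cdot)=\omega^2\nu^4+\cdots$ is a quartic with no cubic term, so $d_1''(\mu,\nu)=12\omega^2\nu^2+(s_2+n-4s_1-2\mu)\,\omega=12\omega^2\nu^2+(n-\bar s_1-2\mu)\,\omega$ using $s_2=4s_1-\bar s_1$ with $0<\bar s_1<n$. Thus $d_1''$ is either positive for all $\nu$ (when $2\mu\le n-\bar s_1$, so $d_1$ is convex and has at most two real zeros altogether), or it vanishes exactly at $|\nu|=\nu_\ast(\mu)$ with $0<\nu_\ast^2<\tfrac16$ (the inequality $\nu_\ast^2<\tfrac16$ is equivalent to $\bar s_1<n+2s_1$). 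In the latter case $d_1$ is concave on $(-\nu_\ast,\nu_\ast)$ and convex on $(\nu_\ast,1)$ and on $(-1,-\nu_\ast)$. For $n\ge7$ one has $d_1(\mu,0)>0$ and $d_1(\mu,\pm1)>0$; a concave arc with positive endpoints has no zero, and a convex arc has at most one sign change from $-$ to $+$ once its left endpoint value is negative, so $d_1(\mu,\cdot)$ has at most two zeros in $(0,1)$ and at most two in $(-1,0)$, each such pair bounding a single maximal interval on which $d_1<0$. On such an interval $\det m_1<0$, so $n_1$ is odd there; being bounded by exactly two simple zeros of $d_1$, it equals $1$. With $\sigma=1$ this gives $\eta_1=-1$ at the lower edge of the interval and $\eta_1=+1$ at the upper edge. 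Matching this with the shape of the zero set $\mu=\mu_0(\nu)$ of the preceding proposition — how many dips occur, whether each lies in $(-1,0)$, in $(0,1)$, or straddles $\nu=0$, and in which order the minima $m_\pm$ are crossed — produces the lists in (a) and (b). The ordering $m_+\lessgtr m_-$ is dictated by the asymmetry $d_1(\mu,\nu)-d_1(\mu,-\nu)=-2(s_2-n)\omega\,\nu$, which is of one sign on $\nu>0$ according as $s_2<n$ (true for $n\le11$, Lemma~\ref{5EnLe}) or $s_2>n$ (true for $n\ge12$), exactly the split between $n\in\{7,\dots,11\}$ and $n\ge12$. For $3\le n\le6$ one has $b_1<0$, so $d_1(\mu,0)=a_1+\mu b_1$ changes sign at $\mu_1$; combining the convexity bound with the global "single curve" description of the zero set (which reaches $\nu=-1$ as $\mu\to\infty$) gives the organization of (a) around $\mu_1$. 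Finally, to read the bifurcations at $\nu<0$ as genuine positive-frequency solutions one passes to the block $m_{0(n-1)}(\nu)=\bar m_1(-\nu)$, i.e. uses $\eta_{n-1}(\nu)=-\eta_1(-\nu)$, which accounts for the sign bookkeeping in the statement.

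\textbf{Main obstacle.} The delicate part is precisely this bookkeeping: extracting from the qualitative picture of the zero set together with the convexity estimate the exact number and position of the sign changes in each $\mu$-regime and for each residue class of $n$ — checking $\nu_\ast<1$ uniformly, handling the borderline behaviour near $\nu=\pm1$ (where $\det m_1$ has its spurious double zero and, for $3\le n\le6$, the zero curve of $d_1$ actually runs out to $\nu=-1$), and keeping straight the several sub-cases distinguished by the signs in Lemma~\ref{5EnLe}. Everything else — the reduction to $d_1$, the baseline $n_1\equiv0$, and the edge-of-dip evaluation of $\eta_1$ — is routine once this combinatorial picture is pinned down.
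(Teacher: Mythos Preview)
Your approach is correct but differs from the paper's in one key device. The paper does not analyse the convexity of $d_1$ in $\nu$ at all; instead it computes the \emph{trace}
\[
T_{1}(\nu)=\mu\omega(\nu-1)^{2}+2\omega(\nu^{2}+1)+\mu(n/2+1)+2\alpha_{1},
\]
observes that $T_{1}>0$ for $\mu>0$, and concludes directly that $n_{1}\le2$ for every $\nu$. Hence $n_{1}=1$ whenever $\det m_{1}<0$ and $n_{1}\in\{0,2\}$ whenever $\det m_{1}>0$; the remaining dichotomy is settled not by continuity from $\nu=\pm\infty$ but by computing the three eigenvalues of $m_{1}(0,\nu)$ explicitly (namely $0$, $s_{1}(\nu-1)^{2}$, $s_{1}(\nu+1)^{2}+2\alpha_{1}$), which gives $n_{1}(\varepsilon,\nu)\le1$ for small $\varepsilon$ and forces $n_{1}=0$ throughout the connected region $\{d_{1}>0\}$. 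The $\eta_{1}$ values and the ordering $m_{+}\lessgtr m_{-}$ (via the sign of $s_{2}-n$, equivalently via $c_{1}(\nu)\lessgtr c_{1}(-\nu)$) are then read off exactly as you do.

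What each buys: the paper's trace trick is a one-line replacement for your entire second-derivative analysis and your handling of the double contact at $\nu=1$; it gives $n_{1}\le2$ globally without ever counting zeros of $d_{1}$. Conversely, your convexity estimate (with the clean bound $\nu_{\ast}^{2}<1/6$) actually pins down that $\{\nu\in(0,1):d_{1}<0\}$ is a \emph{single} interval, a point the paper's proof takes for granted from the picture of the preceding proposition. Your asymmetry identity $d_{1}(\mu,\nu)-d_{1}(\mu,-\nu)=-2(s_{2}-n)\omega\nu$ and the use of $m_{0(n-1)}(\nu)=\bar m_{1}(-\nu)$ to read the negative-$\nu$ crossings as $\mathbb{\tilde Z}_{n}(n-1)$ data are both correct and match the paper's treatment.
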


\begin{proof}
For $n\geq12$, using $s_{2}>n$, we may prove that $c_{1}(\nu)<c_{1}(-\nu)$.
Then, $d_{1}(\mu,\nu)<d_{1}(\mu,-\nu)$, for $\nu>0$. Therefore, $m_{+}<m_{-}$,
for $n\geq12$. In a similar way, we may prove that $m_{-}>m_{+}$, for $7\leq
n\leq11$.

The trace of $m_{1}(\nu)$ is
\[
T_{1}=\mu\omega(\nu-1)^{2}+2\omega(\nu^{2}+1)+\mu(n/2+1)+2\alpha_{1}\text{.}%
\]
Since $T_{1}$ is positive for $\mu>0$, then the Morse index satisfies
$n_{1}(\nu)=1$, when $\det m_{1}(\nu)$ is negative, and $n_{1}\in\{0,2\}$,
when $\det m_{1}(\nu)$ is positive.

Let $\Omega$ be the connected component where $d_{1}(\mu,\nu)$ is negative,
for $n\geq7$ this set is $\Omega=\{(\mu,\nu):\mu>\mu_{0}(\nu)\}$. From the
definition, we have that $n_{1}=1$ in $\Omega-\{(\mu,\pm1)\}$. Now, for
$\mu=0$, the matrix $m_{1}(0,\nu)$ has eigenvalues $0$, $s_{1}\left(
\nu-1\right)  ^{2}$ and $s_{1}\left(  \nu+1\right)  ^{2}+2\alpha_{1}$, hence
$n_{1}(\varepsilon,\nu)\leq1$, for small $\varepsilon$. Since $n_{1}\in\{0,2\}
$ and $n_{1}\leq1$ in $\bar{\Omega}^{c}-\{(\mu,\pm1)\}$, then $n_{1}=0$ there.
Thus, we have that $\eta_{1}(\pm1)=0-0$, $\eta_{1}(\bar{\nu}_{\pm})=0-1$ and
$\eta_{1}(\nu_{\pm})=1-0.$
\end{proof}

From the bifurcation theorem, we get the following result:

\begin{theorem}
For $n\geq12$,\ if $\mu>m_{+}$, the polygonal equilibrium has two global
bifurcation branches of planar periodic solutions with symmetries
$\mathbb{Z}_{n}\left(  \zeta,\zeta,-\zeta\right)  $. If $\mu>m_{-}$, there are
two more bifurcation branches with symmetries $\mathbb{Z}_{n}\left(
\zeta,\zeta,\zeta\right)  $. For $n\in\{7,...,11\}$, the same statement is
true interchanging $m_{+}$ and $m_{-}$.

For $n\in\{3,...,6\}$, if $\mu>\mu_{1}$, the polygonal equilibrium has one
global bifurcation branch of planar periodic solutions with symmetries
$\mathbb{Z}_{n}\left(  \zeta,\zeta,-\zeta\right)  $, and one more with
symmetries $\mathbb{Z}_{n}\left(  \zeta,\zeta,\zeta\right)  $. If $\mu
\in(m_{0},\mu_{1})$,\ there are two global bifurcation branches of planar
periodic solutions with symmetries $\mathbb{Z}_{n}\left(  \zeta,\zeta
,\zeta\right)  $.
\end{theorem}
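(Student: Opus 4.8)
The statement is an assembly of the spectral analysis just carried out together with the bifurcation theorem for the polygonal equilibrium. First I recall that here $\sigma=\sigma_n(\mu)=1$, and that the hyperbolicity hypothesis near a bifurcation point requires the blocks $B_k=m_{0k}(0)$ to be invertible for $k=1,\dots,n-1$, which holds for $\mu\notin\{\mu_1,\dots,\mu_{n-1}\}$ and, in particular, on all the mass ranges that occur in the statement. Under this condition the bifurcation theorem asserts that each \emph{positive} value of $\nu$ at which $\eta_{0k}(\nu)\neq0$, i.e.\ at which the Morse number of $m_{0k}$ jumps, produces a global branch of planar periodic solutions from the polygon with isotropy $\mathbb{\tilde{Z}}_n(k)\times\mathbb{Z}_2$. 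So the proof is reduced to counting, in each mass range, the positive and the negative sign changes of the normalised determinant $d_1(\nu)$ of the block $m_1(\nu)$.

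The second ingredient is the reflection identity $m_{0(n-1)}(\nu)=\bar m_{01}(-\nu)$, equivalently $n_{0(n-1)}(\nu)=n_{01}(-\nu)$ for the Morse numbers, already recorded after Proposition~\ref{E3.2.0} and reflected, at the level of solutions, in the remark that $u_j(-t)$ is a $\mathbb{\tilde{Z}}_n(n-k)$-solution for $-\nu$ whenever $u_j(t)$ is a $\mathbb{\tilde{Z}}_n(k)$-solution for $\nu$. It implies that each \emph{negative} zero of $d_1$ at which $\eta_1\neq0$ corresponds to a positive sign change of the $k=n-1$ block, at which $\eta_{n-1}=-\eta_1\neq0$, hence to a global branch with isotropy $\mathbb{\tilde{Z}}_n(n-1)$. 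Thus the positive zeros of $d_1$ count the $\mathbb{\tilde{Z}}_n(1)$-branches and the negative zeros of $d_1$ count the $\mathbb{\tilde{Z}}_n(n-1)$-branches.

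It then remains to read off the cases from the previous proposition. For $n\in\{3,\dots,6\}$: if $\mu\in(m_0,\mu_1)$ there are two negative sign changes $\nu_-<\bar\nu_-<0$ and no positive one, giving two $\mathbb{\tilde{Z}}_n(n-1)$-branches and no $\mathbb{\tilde{Z}}_n(1)$-branch; if $\mu>\mu_1$ there is one negative ($\nu_-<0$) and one positive ($0<\nu_+<1$) sign change, giving one branch of each symmetry. For $n\geq12$ (and for $7\le n\le11$ after interchanging $m_+$ and $m_-$): if $\mu\in(m_+,m_-)$ there are two positive sign changes $0<\bar\nu_+<\nu_+<1$ and no negative one, giving two $\mathbb{\tilde{Z}}_n(1)$-branches; if $\mu>m_-$ there are four sign changes $-1<\nu_-<\bar\nu_-<0<\bar\nu_+<\nu_+<1$, so the positive pair gives two $\mathbb{\tilde{Z}}_n(1)$-branches and the negative pair two more $\mathbb{\tilde{Z}}_n(n-1)$-branches. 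Two branches of the same symmetry type are distinct because the underlying values of $\nu$ are distinct, and a $\mathbb{\tilde{Z}}_n(1)$-branch differs from a $\mathbb{\tilde{Z}}_n(n-1)$-branch since $1\neq n-1$ for $n\geq3$; globality --- non-admissibility, or return to other bifurcation points with vanishing total degree --- is part of the conclusion of the bifurcation theorem.

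Since the analytic content has been settled in the preceding propositions, there is no genuine obstacle, only bookkeeping, and the one point demanding care is the transcription of the $k=1$ block into the $k=n-1$ block under $m_{0(n-1)}(\nu)=\bar m_{01}(-\nu)$: one must verify that this reflection turns a negative sign change of $d_1$, carrying isotropy $\mathbb{\tilde{Z}}_n(1)$, into a positive sign change of the $(n-1)$-st block carrying isotropy $\mathbb{\tilde{Z}}_n(n-1)$, and that it preserves the non-vanishing of $\eta$ (only its sign may flip), so that the bifurcation theorem still applies; and one must remember that that theorem is being invoked only on the complement of $\{\mu_1,\dots,\mu_{n-1}\}$, which the intervals $(m_0,\mu_1)$, $(\mu_1,\infty)$ and $(m_+,\infty)$ all respect.
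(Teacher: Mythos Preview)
Your proposal is correct and follows essentially the same approach as the paper, which simply states ``From the bifurcation theorem, we get the following result'' and leaves the reader to assemble the pieces; you have made explicit the one implicit step, namely the translation of the negative sign changes of $d_{1}$ into positive sign changes of the block $m_{0(n-1)}$ via $n_{n-1}(\nu)=n_{1}(-\nu)$, so that the bifurcation theorem (which requires $\nu>0$) yields the $\mathbb{\tilde{Z}}_{n}(n-1)$ branches. Your final caveat about the intervals ``respecting'' the complement of $\{\mu_{1},\dots,\mu_{n-1}\}$ is slightly loosely worded---for instance $(m_{+},\infty)$ may well contain some $\mu_{k}$ with $k\neq 1$---but the paper's theorem is itself stated under the standing hypothesis $\mu\neq\mu_{1},\dots,\mu_{n-1}$, so this is only a matter of phrasing.
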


\begin{remark}
When $\mu=0$, the central body does not exist, then the block $m_{1}(\nu)$ is
\[
m_{1}(\nu)=\left(
\begin{array}
[c]{cc}%
s_{1}\nu^{2}+s_{1}+\alpha_{1} & \alpha_{1}i+2\nu s_{1}i\\
-\alpha_{1}i-2\nu s_{1}i & s_{1}\nu^{2}+s_{1}+\alpha_{1}%
\end{array}
\right)  \text{.}%
\]
In this case $m_{1}(\nu)$ has eigenvalues $s_{1}(\nu-1)^{2}$ and $s_{1}%
(\nu+1)^{2}+2\alpha_{1}$. Then $n_{1}(\nu)=0$ for all $\nu$, and there is no
bifurcation in this case.
\end{remark}

\section{Comments}

\subsection{Case $n=2$}

The irreducible representations for $n=2$ are different from those for larger
$n$, due to the action of $\mathbb{Z}_{2}$. This fact gives a change in the
planar representation for $k=1$, but the change of variables for $k=2$ remains
the same as for larger $n$'s. In particular, the bifurcation results for the
spatial case and for the planar case, with $k=2$ are similar to the ones
already given.

For $k=1$, define the isomorphism $T_{1}:\mathbb{C}^{4}\rightarrow W_{1}$ as%
\begin{align*}
T_{1}(v,w)  &  =(v, 2^{-1/2}w, 2^{-1/2}w)\text{ with}\\
W_{1}  &  =\{(v,w,w):v,w\in\mathbb{C}^{2}%
\end{align*}

It is not difficult to find the action of $\mathbb{Z}_{2}$ on $W_{1}$ and to
compute the Hessian of the potential $V$. The matrix $B_{2}$ is the same as
before, but $B_{1}$ is now a $4\times4$ complex matrix. For $\alpha=2$, one
has that $s_{1}=1/4$ and $\omega=\mu+1/4$. The matrix $m_{1}(\nu)$ is%

\[
\left(
\begin{array}
[c]{cccc}%
\mu\left(  \omega\nu^{2}+\mu+17/4\right)  & -2i\nu\omega\mu & -2\left(
2\right)  ^{1/2}\mu & 0\\
2i\nu\omega\mu & \mu\left(  \omega\nu^{2} +\mu-7/4\right)  & 0 & \left(
2\right)  1/2\mu\\
-2\left(  n/2\right)  ^{1/2}\mu & 0 & \omega\nu^{2}+3\mu+ 1/4 & -2i\nu\omega\\
0 & \left(  2\right)  ^{1/2}\mu & 2i\nu\omega & \omega\nu^{2}+ 1/4
\end{array}
\right)  \text{.}%
\]

The determinant of $m_{1}(\nu)$ may be written as%
\[
\det m_{1}(\nu)= 2^{-8}\mu^{2}(4\mu+1)^{2}(\nu^{2}-1)^{2}d_{1}(\nu),
\]
where $d_{1}(\nu)$ is the polynomial
\[
d_{1}(\nu)=\left(  16 \mu^{2} + 8 \mu+1\right)  \nu^{4} + \left(  -16 \mu^{2}
+20 \mu+6\right)  \nu^{2} -\left(  84\mu+119\right)  \text{.}%
\]

For $\mu>0$, $d_{1}(\nu)$ is zero only for $\pm\nu_{1}$, where $\nu_{1}$ is
the positive solution of%
\[
\nu_{1}^{2} = \left(  2\mu-3 + 2\left(  \mu^{2}+18\mu+32\right)
^{1/2}\right)  /(4\mu+1)\text{.}%
\]

Since $\nu_{1}>1$ for $\mu>0$, it is enough to know the eigenvalues of
$m_{1}(\nu)$ for one value of $\mu$, for instance, $\mu=1$. These eigenvalues
are $5(\nu\pm1)^{2}$ and $\left(  5\nu^{2} \pm2(25\nu^{2}+81)^{1/2}
+11\right)  /4$.

Thus, one sees that $n_{1}=1$ for $-\nu_{1}<\nu<\nu_{1}$ and $n_{1}=0$
otherwise. Hence, $\eta_{1}(\nu_{1})=1$ and $\eta_{1}(1)=0$. Therefore,

\begin{theorem}
There is a global bifurcation branch of planar periodic solutions, from
$2\pi/\nu_{1}$, with symmetries $\mathbb{Z}_{2}\left(  \zeta,\pi,\pi\right)
$. There is also a global bifurcation of planar periodic solutions, from the
period $2\pi$, and symmetry $\mathbb{Z}_{2}\left(  \zeta,\pi,0\right)  $, that
is
\[
u_{0}(t)=0\text{ and }u_{2}(t)=-u_{1}(t).
\]
Finally, there are two spatial bifurcation branches, from $\nu=\sqrt{\mu+1/4}$
and $\nu=\sqrt{\mu+2}$, the first type is such that $u_{0}(t)=0$ and
$z_{0}(t)=0$,
\[
u_{2}(t)=-u_{1}(t)\text{ and }z_{2}(t)=-z_{1}(t),
\]
while, the second type is with $u_{0}(t)=0$ and $z_{0}(t+\pi)=z_{0}(t)$,
\[
u_{2}(t)=-u_{1}(t)=-u_{1}(t+\pi)\text{ and }z_{2}(t)=z_{1}(t)=-z_{1}(t+\pi).
\]

\end{theorem}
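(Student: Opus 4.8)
The plan is to run the argument of the spectral-analysis sections with $n=2$, $\alpha=2$, observing that three of the four branches are the $n=2$ specialisations of results already proved and only the planar block $W_{1}$ needs new work. The two spatial branches are the $k=1$ and $k=n=2$ cases of the spatial bifurcation theorem for the polygonal equilibrium: the spatial irreducible representations and the blocks $m_{1k}(\nu)=\nu^{2}-(\mu+s_{k})$ and $m_{1n}(\nu)$ of Proposition \ref{E3.2.1} are unchanged for $n=2$, so $m_{11}(\nu)=\nu^{2}-(\mu+1/4)$ has a sign change only at $\nu=\sqrt{\mu+1/4}$ with $\eta_{11}=1$, while $\det m_{1n}(\nu)=\mu\nu^{2}(\nu^{2}-(\mu+2))$ gives, after discarding the restricted kernel direction at $\nu=0$, a Morse-index drop at $\nu=\sqrt{\mu+2}$ with $\eta_{1n}=1$. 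The planar $k=2=n$ branch is the $n=2$ case of the block $k=n$ computation: there $B_{n}=\omega\,diag(3,0)$, $d_{2}(\nu)=\omega^{2}\nu^{2}(\nu-1)(\nu+1)$, and $\eta_{2}(1)=1$. The explicit symmetry relations then come from specialising the symmetry descriptions: for $k=n=2$ one has $\gcd(2,2)=2>1$ and $e^{i\zeta}=-1$, hence $u_{0}(t)=0$ and $u_{2}(t)=-u_{1}(t)$; for the spatial $k=1=n/2$ block one is in the Hip-Hop situation, giving $u_{0}=z_{0}=0$, $u_{2}(t)=-u_{1}(t)$, $z_{2}(t)=-z_{1}(t)$; and for the spatial $k=n=2$ block the isotropy group $\mathbb{\tilde{Z}}_{2}(2)\times\mathbb{\tilde{Z}}_{2}$ yields $u_{0}(t)=0$, $u_{2}(t)=-u_{1}(t)=-u_{1}(t+\pi)$, $z_{2}(t)=z_{1}(t)=-z_{1}(t+\pi)$ together with the stated relation for $z_{0}$.

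The only genuinely new object is the planar block $W_{1}$, which for $n=2$ is four- rather than three-dimensional. I would work with the normalised matrix $m_{1}(\nu)=m_{01}(\sqrt{\omega}\nu)$ displayed above and the factorisation $\det m_{1}(\nu)=2^{-8}\mu^{2}(4\mu+1)^{2}(\nu^{2}-1)^{2}d_{1}(\nu)$, with $d_{1}(\nu)=(4\mu+1)^{2}\nu^{4}+(-16\mu^{2}+20\mu+6)\nu^{2}-(84\mu+119)$. First, the hyperbolicity hypothesis required to apply the bifurcation theorem holds because $\det m_{1}(0)=2^{-8}\mu^{2}(4\mu+1)^{2}d_{1}(0)$ with $d_{1}(0)=-(84\mu+119)\neq0$ for \emph{every} $\mu>0$; note that, in contrast to the case $n\geq3$, there is no exceptional mass $\mu_{1}$ to exclude. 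Second, reading $d_{1}(\nu)$ as a quadratic in $\nu^{2}$ with positive leading coefficient and negative constant term, it has exactly one positive root, so $d_{1}(\nu)=0$ has precisely the two real solutions $\pm\nu_{1}$ with $\nu_{1}^{2}=(2\mu-3+2(\mu^{2}+18\mu+32)^{1/2})/(4\mu+1)$; isolating the square root and squaring reduces $\nu_{1}^{2}>1$ to $14\mu+28>0$, so $\nu_{1}>1$ for all $\mu>0$, and the only candidate bifurcation values $\nu>0$ for this block are $\nu=1$ (a square factor, hence no sign change) and $\nu=\nu_{1}$.

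To read off the Morse number $n_{1}(\nu)$ I would argue that the singular $\nu$-values depend continuously on $\mu$ and never collide for $\mu>0$, so $n_{1}$ is constant on each of $(0,1)$, $(1,\nu_{1})$, $(\nu_{1},\infty)$; evaluating at $\mu=1$, where the four self-adjoint eigenvalues of $m_{1}(\nu)$ are $5(\nu\pm1)^{2}$ and $(5\nu^{2}\pm2(25\nu^{2}+81)^{1/2}+11)/4$, the first three are nonnegative and the last is negative exactly when $25\nu^{4}+10\nu^{2}-203<0$, i.e. precisely for $|\nu|<\nu_{1}$. Hence $n_{1}(\nu)=1$ on $(0,\nu_{1})$ and $n_{1}(\nu)=0$ on $(\nu_{1},\infty)$, so with $\sigma=\sigma_{n}(\mu)=1$ one gets $\eta_{1}(1)=0$ and $\eta_{1}(\nu_{1})=1$. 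The bifurcation theorem then delivers the global branch of planar periodic solutions from $2\pi/\nu_{1}$ with isotropy $\mathbb{\tilde{Z}}_{2}(1)$, which together with the three branches recalled above completes the proof.

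I expect the real obstacle to be this last computation: justifying that the Morse number of the $4\times4$ block is correctly obtained for every $\mu>0$ by a single-mass evaluation. This rests on knowing that the zero set of $\det m_{1}(\mu,\nu)$ in the $(\mu,\nu)$-plane consists only of the two continuous curves $\nu=\pm\nu_{1}(\mu)$ (the factor $(\nu^{2}-1)^{2}$ being a non-sign-changing square), and that these stay strictly away from $\nu=\pm1$ for $\mu>0$, so that the connected components of the complement are swept out monotonically in $\mu$; only then is the one-mass eigenvalue computation a legitimate argument for all admissible $\mu$. The remaining steps are just the already-established block decomposition and a direct transcription of the isotropy subgroups.
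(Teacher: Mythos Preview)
Your proposal is correct and follows essentially the same route as the paper: the spatial $k=1,2$ and planar $k=2$ blocks are handled as specialisations of the general results, and for the new $4\times4$ planar block $m_{1}(\nu)$ you use the same determinant factorisation, the same formula for $\nu_{1}^{2}$, the same observation that $\nu_{1}>1$ for $\mu>0$, and the same eigenvalue computation at the single mass $\mu=1$ to read off $n_{1}(\nu)$. If anything, you are more careful than the paper about justifying why the single-mass evaluation propagates to all $\mu>0$ via the continuity and non-collision of the singular curves; the paper simply asserts that since $\nu_{1}>1$ it suffices to compute the eigenvalues at $\mu=1$.
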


\subsection{Resonances}

We have proved that there is a bifurcation of periodic solutions with the
symmetries (\ref{D3.2.0}) when the block $M_{1}(\nu)$ changes its Morse index
at $\nu_{1}$. However, we cannot deduce, from the symmetries (\ref{D3.2.0}),
that the solutions are truly spatial, because all the spatial coordinates can
be zero: $z_{j}(t)=0$. In fact, there is a non-resonance condition to assure
that the solutions are truly spatial.

\begin{proposition}
If $M_{1}(\nu)$ changes its Morse index at $\nu_{1}$, and the matrix
$M_{0}(2l\nu_{1})$ is invertible for all $l\in\mathbb{N}$, then the
equilibrium $x_{0}$ has a bifurcation of truly spatial periodic solutions from
$\nu_{1}$ with symmetries (\ref{D3.2.0}) .
\end{proposition}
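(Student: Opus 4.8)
The plan is to view the desired solutions through their symmetry and to peel off the planar coordinates by a Liapunov--Schmidt reduction in which the non-resonance hypothesis is exactly what is needed. Recall that the symmetry (\ref{D3.2.0}) says the horizontal coordinates $u_{j}=x_{j}+iy_{j}$ are $\pi$-periodic while the vertical coordinates $z_{j}$ are anti-$\pi$-periodic, so in Fourier series the $u_{j}$ carry only even modes and the $z_{j}$ only odd modes. Since $V$ is even in the $z$ variables (invariance under $\kappa$), the subspace $\{z\equiv 0\}$ is invariant for $f$, and the branch furnished by the change of Morse index of $M_{1}$ at $\nu_{1}$ --- which, by the bifurcation theorem for $j=1$, already has isotropy $\mathbb{\tilde{Z}}_{2}$, i.e. satisfies (\ref{D3.2.0}) --- is truly spatial exactly when it does not lie in $\{z\equiv 0\}$.

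First I would restrict $f$ to the fixed-point subspace of $\mathbb{\tilde{Z}}_{2}$, which splits as $V_{0}^{\mathrm{ev}}\oplus V_{1}^{\mathrm{odd}}$ (planar coordinates with even Fourier modes, vertical coordinates with odd Fourier modes), and write $f=(f_{0},f_{1})$ accordingly. The vertical component $f_{1}$ vanishes identically on $\{z\equiv 0\}$, because at body $j$ it equals $-\nu^{2}m_{j}\ddot{z}_{j}+\partial_{z_{j}}V$ and $\partial_{z_{j}}V|_{z=0}=0$; moreover, by Proposition \ref{E1.1.1} the Hessian $D^{2}V(x_{0})$ has no planar--vertical cross terms. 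Working modulo the one-dimensional rotation orbit, i.e. on a Poincar\'e section transverse to $\mathcal{\bar{J}}x_{0}$ (the Lagrange-multiplier device underlying the orthogonal degree), the linearization of $f_{0}$ in the planar variables at $(x_{0},\nu_{1})$ is block-diagonal with blocks $M_{0}(0)^{\perp e}$, which is invertible since $\sigma\neq 0$, and $M_{0}(2l\nu_{1})$ for $l\geq 1$, which are invertible by hypothesis. Hence the global implicit function theorem used above for the reduction to $p$ modes solves $f_{0}=0$ for $u=u(z,\nu)$ on a neighborhood of $(x_{0},\nu_{1})$ with $u(0,\nu)=x_{0}$, and substitution produces a reduced bifurcation equation $g(z,\nu):=f_{1}(u(z,\nu),z,\nu)=0$ living on the vertical modes $V_{1}^{\mathrm{odd}}$.

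Finally, $g$ inherits the $S^{1}$ (time-translation) and, in the polygonal case, $\mathbb{\tilde{Z}}_{n}$ equivariance, remains $\mathbb{\tilde{Z}}_{2}$-orthogonal, and since $u(0,\nu)=x_{0}$ and there are no planar--vertical cross terms, the linearization $D_{z}g(0,\nu)$ is exactly the vertical block $M_{1}$ evaluated on the odd modes, with fundamental mode $M_{1}(\nu)$. As the Morse index of $M_{1}$ changes at $\nu_{1}$, so does that of $D_{z}g(0,\cdot)$, the orthogonal index jumps, and repeating the degree argument of the earlier theorem for $g$ yields a continuum of zeros with $z\not\equiv 0$ bifurcating from $2\pi/\nu_{1}$ and carrying (\ref{D3.2.0}); via $u=u(z,\nu)$ these lift to genuine spatial periodic solutions of $f$, as claimed. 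I expect the main obstacle to be verifying that $M_{0}$ is the complete planar obstruction --- that, restricted to $\{z\equiv 0\}$, the only directions that can enter the kernel at $\nu_{1}$ are the rotational one and those controlled by the $M_{0}(2l\nu_{1})$, so that the non-resonance hypothesis really keeps the reduction valid on a full neighborhood, equivalently that the bifurcating branch is not tangent to the planar subspace. A secondary point is that ``truly spatial'' is only guaranteed near the bifurcation point: once $\nu$ moves far enough that some $M_{0}(2l\nu)$ becomes singular the reduction breaks and the global branch may re-enter $\{z\equiv 0\}$, which is consistent with it possibly terminating at a planar bifurcation point.
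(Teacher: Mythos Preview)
Your argument is correct but more elaborate than the paper's. The paper does not rebuild the bifurcation: it takes the branch with isotropy $\mathbb{\tilde{Z}}_{2}$ already supplied by the earlier theorem (from $\eta_{1}(\nu_{1})\neq 0$) and simply shows it cannot lie in the planar subspace near $(x_{0},\nu_{1})$. A planar solution satisfying (\ref{D3.2.0}) would have $z_{j}\equiv 0$ and $(x_{j},y_{j})$ $\pi$-periodic, hence isotropy $\mathbb{Z}_{2}\times\mathbb{Z}_{2}$ (generated by $\kappa$ and by $\pi\in S^{1}$); on that fixed-point space the linearization of $f$ has Fourier blocks $M_{0}(2l\nu)$, all invertible by hypothesis, so the implicit function theorem excludes any nontrivial solution there and the already-existing branch must be spatial. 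Your route instead solves the planar equation $f_{0}=0$ for $u=u(z,\nu)$ and re-runs the degree argument on the reduced vertical equation $g$; this is more constructive and makes the Poincar\'e-section handling of the rotation kernel explicit, whereas the paper's version is shorter because it reuses the existing branch rather than reconstructing it. One minor slip in your write-up: the block you call ``$M_{0}(0)^{\perp e}$'' should be $M_{0}(0)$ restricted orthogonal to the rotation generator $\mathcal{\bar{J}}x_{0}$, not to $e$ (which lives in $V_{1}$, not $V_{0}$); the content of your Poincar\'e-section remark is right, only the label is off.
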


\begin{proof}
We need to prove that near $(\bar{a},\nu_{k})$ there are no planar solutions
with symmetry $\mathbb{\tilde{Z}}_{2}$, i.e.%
\[
x_{j}(t)=x_{j}(t+\pi)\text{, }y_{j}(t)=y_{j}(t+\pi)\text{ and }z_{j}%
(t)=0\text{.}%
\]
These solutions have isotropy group $H=\mathbb{Z}_{2}(\kappa)\times
\mathbb{Z}_{2}(\pi)$, one generated by $\kappa$ and the other by $\pi\in
S^{1}$.

Let $f^{\prime}(\bar{a})$ be the derivative of the bifurcation operator
$f(x)$, at $\bar{a}$, and define $f^{\prime}(\bar{a})|_{\mathcal{W}^{H}}$ as
the derivative in the fixed point space of $H$. The linear map $f^{\prime
}(\bar{a})|_{\mathcal{W}^{H}}$ has a block of the form $M_{0}(2l\nu)$, for
each Fourier mode $l$.

By assumption, the blocks $M_{0}(2l\nu_{1})$ are invertible, then the
derivative $f^{\prime}(\bar{a})|_{\mathcal{W}^{H}}$ is invertible. Using the
implicit function theorem, we may conclude the non-existence of periodic
solutions with isotropy group $H$ near $(\bar{a},\nu_{1})$. As a consequence,
the bifurcation from $\nu_{1}$ must be spatial.
\end{proof}

For the polygonal equilibrium, we have proved before that the block
$m_{1k}(\nu_{k})$ changes its Morse index at $\nu_{k}=\sqrt{\mu+s_{k}}$.
Restricting $f^{\prime}(\bar{a})$ to the group $\mathbb{\tilde{Z}}%
_{n}(k)\times\mathbb{Z}_{2}\times\mathbb{Z}_{2}$, we may prove, analogously to
the general case, that the bifurcation from $\nu_{k}$ is truly spatial if
$m_{0k}(2l\nu_{k})$ is invertible for all $l^{\prime}$s.

For $k\in\{1,...,n-1\}$, we have proved that the block $m_{0k}(\nu)$ is
invertible for $\left\vert \nu\right\vert >\sqrt{\mu+s_{1}}$, when $\mu>m_{0}%
$: $\nu=1$ in the previous graph corresponds to $\nu=\sqrt{\mu+s_{1}}$ before
the normalization used in the study of $m_{k}(\nu)$. Since $2l\nu_{k}%
>\sqrt{\mu+s_{1}}$ for all $l$'s, then $m_{0k}(2l\nu_{k})$ is invertible for
all $l$'s, when $\mu>m_{0}$. Therefore, the bifurcation from $\nu_{k}$ is
truly spatial for $k\in\{1,...,n-1\}$, when $\mu>m_{0}$.

If one uses the same normalization done for the planar spectrum, that is
$m_{0k}(\sqrt{\omega} \nu)$, for the spatial spectrum, then one has that, for
$k=1,...,n-1$,
\[
m_{1k}(\sqrt{\omega} \nu)= {\nu}^{2} (\mu+s_{1})-(\mu+s_{k})\text{,}%
\]
and the corresponding expression for $k=n$.

Then, the block $m_{1k}$ changes its Morse index at $\nu_{k}= \sqrt{(\mu
+s_{k})/(\mu+s_{1})}$. Restricting $f^{\prime}(\bar{a})$ to the group
$\mathbb{\tilde{Z}} _{n}(k)\times\mathbb{Z}_{2}\times\mathbb{Z}_{2}$, we may
prove, analogously to the general case, that the bifurcation from $\nu_{k}$ is
truly spatial if $m_{0k}(2l\nu_{k})$ is invertible for all $l$'s.

For $k\in\{1,...,n-1\}$, we have proved that the block $m_{0k}(\nu)$ is
invertible for $\left\vert \nu\right\vert >1$, when $\mu>m_{0}$. Since
$2l\nu_{k}>1$ for all $l$'s, then $m_{0k}(2l\nu_{k})$ is invertible for all $l
$'s, when $\mu>m_{0}$. Therefore, the bifurcation from $\nu_{k}$ is truly
spatial for $k\in\{1,...,n-1\}$, when $\mu>m_{0}$. For other values of $\mu<
m_{0}$, one has that ${\nu}_{k}^{2}=(\mu+s_{k})/(\mu+s_{1})$, and $\omega
=\mu+s_{1}=(s_{k}-s_{1})/({\nu_{k}}^{2}-1)$. The equation $d_{k}(2l\nu_{k})=0$
leads to finding the zeros of a $4$th degree polynomial in $\nu_{k}$, with
coefficients which depend on $l$. For the piece of the graph where $\nu_{0}%
<1$, one cannot have the resonance $\nu_{0}= 2l\nu_{k}$ and, for the rest of
the graph which is asymptotic to the line $\mu=-s_{1}$, there will be only a
finite number of $l$'s, where one could have the equality of the frequencies.
As a consequence, there is only a finite number of possible resonances, for
$k=1,..., n-1$.

For $k=n$, ${\nu_{n}}^{2}=(\mu+n)/(\mu+s_{1})$ and $\nu_{0}=1$, so that there
is at most a finite number of $l$'s and positive $\mu$'s which satisfy the
relation $\nu_{0}=2l\nu_{n}$.

\begin{remark}
If one has a resonance between spatial frequencies, for $k_{1}$ and $k_{2}$,
such that $\nu_{2}=l\nu_{1}$, then one has the relation $\mu(l^{2}%
-1)=s_{k_{2}}-l^{2}s_{k_{1}}$, where $s_{k_{2}}$ is replaced by $n$ for
$k_{2}=n$. If $l=1$ and $k_{2}\neq n$, then $k_{2}=n-k_{1}$, and the
bifurcating orbit for $k_{2}$ is the one for $k_{1}$ taken in the opposite
direction, due to the symmetry $\tilde{\kappa}$.

If $l=1$ and $k_{2}=n$, then one needs $s_{k_{1}}=n$ for a $1:1$ resonance. We
know that $s_{n/2}>...>s_{1}$, and we have seen, in the last lemma
\label{5EnLe}, that $s_{1}>n\ $ for $n>472$, that $s_{2}>n>s_{1}$ for $12\leq
n\leq472$ and $n>s_{2}$ for $3\leq n\leq11$. Furthermore, numerically one
computes $n>s_{3}$ for $6\leq n\leq7$ and $s_{3}>n$ for $8\leq n\leq11$. Since
all these inequalities between $s_{0}=n$ and $s_{k}$ are strict, then we may
conclude that there are strict inequalities between $\nu_{0}$ and $\nu_{k_{1}%
}$, that is there is no $1:1$ resonance in this case.

For $k_{2}<n$, we may assume that $1\leq k_{1}<k_{2}\leq n/2$ and we need, for
$\mu\geq0$, to have $l^{2}\leq s_{k_{2}}/s_{k_{1}}$. Using the recurrence
formulae of the appendix, we see that $s_{k_{2}}/s_{k_{1}} < k_{2}^{2}%
-k_{1}^{2}+1\leq k_{2}^{2}$. Hence, the number of possible subharmonic
resonances is finite, as well as the number of possible solutions to the
relation $\mu= (s_{k_{2}}-l^{2}s_{k_{1}})/(l^{2}-1)$.
\end{remark}

\begin{remark}
For resonances between planar frequencies, if $\nu_{2}=l\nu_{1}$ and
$d_{k_{j}}(\nu_{j})=0$ for $j=1,2$, it is easy to see that $\nu$ on the graph
of $d_{k}(\nu)=0$ is bounded above, for $\mu\geq0$ and away from $0$ for all
positive bounded $\mu$'s, except for small neighborhoods of $\mu_{k}$. Thus,
the number of possible $l$'s is finite. Now, if $k_{j}<n$, $d_{k_{2}}(l\nu
_{1})-l^{2}d_{k_{1}}(\nu_{1})$ is a polynomial of degree $2$ in $\mu$ and
$\nu_{1}$, which can be solved for $\mu$ in terms of $\nu_{1}$ and $d_{k_{1}%
}(\nu_{1})$ gives a polynomial of degree $16$ in $\nu_{1}$, if $l>1$, and of
degree $4$ if $l=1$. In any case, we obtain at most $16$ solutions for
$\nu_{1}$ and $32$ for $\mu$, i.e., a finite number of possible resonances,
with $l$ bounded if $\mu$ is positive, bounded, and not in a neighborhood of
$\mu_{k_{1}}$, where $\nu_{1}$ is small.

Note that, if $k_{2}=n$, then $\nu_{2}=1$ and there is a resonance for
$\nu_{1}=1/l$ and the corresponding $\mu$, with $l$ going to infinity as $\mu$
goes to $\mu_{k_{1}}$. For $k_{1}\leq n/2$, assume that the graph for
$d_{k_{2}}(\nu)$ covers a small interval $[\mu_{k_{1}}, \tilde\mu]$. Then, if
$\tilde{l}$ is such that $\tilde{l} \nu_{k_{1}}(\tilde{\mu})\geq nu_{k_{2}%
}(\tilde{\mu})$, the graphs for $d_{k_{1}}(l\nu)$ will cut the graph for
$d_{k_{2}}(\nu)$, for any $l\geq\tilde{l}$, in the interval, at $\mu(l)$ with
the limit $\mu_{k_{1}}$, giving rise to an infinite number of resonances. This
situation will happen if $\mu_{k_{2}}>\mu_{k_{1}}$, which is the case for $n$
large enough (see remark 29 in \cite{GaIz11}). But, for $\mu$ different from
$\mu_{k_{1}}$, one has a finite number of possible resonances.

A similar argument may be done for $k_{1}>n/2$ and intervals at the left of
$\mu_{k_{1}}$.

Note that, at $\mu_{k_{1}}$ one has a two-dimensional kernel for the
linearization at the relative equilibrium (for $k_{1}$ and $n$) and the orbit
is no longer hyperbolic, so one needs a different computation of the
bifurcation index.

Assume that there is no resonance, that is that $d_{j}(l\nu_{k})\neq0$, for
$j\neq k$ and any $l\geq1$, and $\nu_{k}\neq1$. Then the equations for $j\neq
k$, $j\neq n$ may be solved close to the bifurcation orbit, by an implicit
function argument in terms of the component for $k$. The equation for $n$,
which has a one dimensional kernel, is solved by a Poincar\'{e} section
argument which is used in \cite{IzVi03} in order to compute the orthogonal
index for an isolated hyperbolic orbit (theorem 3.1 p. 247 in the above
reference). One has then a reduction to one mode, a single $k$ and a
one-dimensional complex orthogonal equation. The non-trivial solutions of this
reduced system have a least period equal to $2\pi$, which implies that, except
for a finite number of $\mu$'s, if different from $\mu_{k}$, the branches
which bifurcate from the relative equilibrium, for fixed $\mu$, will be different.
\end{remark}

\subsection{Linear stability}

The linearization of the equation at the equilibrium $x_{0}$, is%
\begin{equation}
\mathcal{M}\ddot{u}+2\sqrt{\omega}\mathcal{M\bar{J}}\dot{u}=D^{2}V(x_{0})u.
\label{D7.2.0}%
\end{equation}
For this problem, an equilibrium is linearly stable if the linear equation
(\ref{D7.2.0}) has $6(n+1)$ linearly independent solutions of the form
$u(t)=e^{i\nu_{0}t}v_{0}$ with $\nu_{0}\in\mathbb{R}$, for $n+1$ bodies.

Now, if $\nu_{0}\in\mathbb{R}$ is a bifurcation value, that is the determinant
is $0$ at $\nu_{0}$, then there is a vector $v_{0}$ such that $M(\nu_{0}%
)v_{0}=0$. Since%
\[
-\mathcal{M}\ddot{u}-2\sqrt{\omega}\mathcal{M\bar{J}}\dot{u}+D^{2}%
V(x_{0})u=e^{i\nu_{0}t}M(\nu_{0})v_{0}=0\text{,}%
\]
then the function $u(t)=e^{i\nu_{0}t}v_{0}$ is a solution of the linearized
equation (\ref{D7.2.0}).

Therefore, for each point $\nu_{0}$ where $M(\nu)$ is singular, the
equilibrium has a direction where the linearized equation is stable. Now,
$\nu_{0}$ may be a multiple zero of the characteristic determinant and one
could have a polynomial increase in $t$. A weaker definition of stability is
that of spectral stability, where the equilibrium $x_{0}$ is spectrally stable
if $M(\nu)$ is non-invertible at $6(n+1)$ values of $\nu$, counted with multiplicities.

As a consequence, we may deduce the spectral stability from the spectral
analysis which we did to prove bifurcation. We shall recover, with somewhat
different arguments, the results of \cite{Ro00}, of \cite{VaKo07} and others.

For the polygonal equilibrium, we may give a full description of stability. We
have proved that each spatial block $m_{1k}(\nu)$ is non-invertible at exactly
one positive value, for $k\in\{1,...,n\}$, and $m_{1n}(0)$ has a
one-dimensional kernel, with a double zero. Then, the spatial spectrum gives
$n$ stable solutions. The kernel of $m_{1n}(0)$ is generated by the vector
$(1,\sqrt{n})$, which gives, under the transformation $P$ the vector
$(1,1,,,,1)=e$. The eigenvectors corresponding to the double root are
$(a+bt)e$. In fact, since $\sum_{i}a_{ij}=0$, the vector $e$ generates the
kernel of the vertical Hessian.

Actually, if one sums all the vertical components of the equations, one gets
$\mu\ddot{z}_{0}+\sum_{j=1}^{n}\ddot{z_{j}}=0$ and, if the initial conditions
for $\mu z_{0}+\sum_{j=1}^{n}z{_{j}}$ are $0$, for position and speed, then
this sum remains $0$. Hence, by imposing these restrictions, one obtains a
linear stability in the vertical direction.

Next, we analyze the planar spectrum.

\begin{proposition}
Let $m_{\ast}$ be the largest of the $m_{+}$'s, for $k=1,...,n$. Then, the
polygonal equilibrium is spectrally stable only if $n\geq7$ and $\mu>m_{\ast}
$. It will be linearly stable if one restricts vertical translations,
rotations around the center of mass and scaling of the frequency.
\end{proposition}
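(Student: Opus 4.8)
The plan is to read off both forms of stability from the block decomposition of $M(\nu)$ and from the spectral analysis already carried out. The starting point is the observation made just before the statement: $u(t)=e^{i\nu_0t}v_0$ solves the linearised equation (\ref{D7.2.0}) exactly when $M(\nu_0)v_0=0$, and the polynomial $\det M(\nu)$ has degree $6(n+1)$ in $\nu$ with non‑zero leading coefficient $\prod_j m_j^{3}=\det\mathcal{M}$; hence spectral stability is equivalent to $\det M(\nu)$ having all $6(n+1)$ of its roots real. Since $P^{-1}M(\nu)P=\mathrm{diag}(M_0(\nu),M_1(\nu))$ and each $M_j$ splits further into the blocks $m_{jk}(\nu)$, we have $\det M(\nu)=\prod_{k}\det m_{0k}(\nu)\cdot\prod_{k}\det m_{1k}(\nu)$, and spectral stability amounts to each factor having only real roots. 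I would dispose of the spatial blocks first: for $k<n$ the factor $\det m_{1k}(\nu)=\nu^{2}-(\mu+s_k)$ has the real roots $\pm\sqrt{\mu+s_k}$ because $\mu+s_k\ge\mu+s_1=\omega>0$, and $\det m_{1n}(\nu)=\mu\nu^{2}(\nu^{2}-(n+\mu))$ has the real roots $0$ (double) and $\pm\sqrt{n+\mu}$. So the vertical directions never obstruct spectral stability; the only non‑semisimple feature there is the Jordan block at $\nu=0$ of $m_{1n}$, with generalised eigenvectors $(a+bt)e$, and since $\sum_i a_{ij}=0$ the restriction $\sum_j z_j=0$ (restricting vertical translations) deletes it.

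Next I would treat the planar blocks. The block $k=n$ gives $\det m_{0n}(\nu)=\omega^{2}\nu^{2}(\nu^{2}-1)$, with real roots $0$ (double) and $\pm1$; the Jordan block at $\nu=0$ is the one carried by $A_1\bar a=T_n(-\sqrt n\,e_2)$, that is by rotation of the configuration about the centre of mass and, through its generalised eigenvector, by rescaling of $\omega$. The genuine obstructions come from the remaining planar blocks. For $k\in\{2,\dots,n-2\}$ the determinant $d_k(\nu)$ is a real quartic, hence has $0$, $2$ or $4$ real roots; from the description of the zero set of $d_k(\mu,\nu)$ obtained above (the curves $\mu_\pm(\nu)$, the ordering $-s_1<\mu_k<m_0<m_+<m_-$, Remark \ref{E6.3.1}), a horizontal line $\mu=\mathrm{const}$ meets this zero set in four points — equivalently $d_k$ has four real roots — if and only if $\mu$ exceeds both $m_+^{(k)}$ and $m_-^{(k)}$; otherwise a conjugate pair $\nu_0=a\pm ib$, $b\neq0$, survives and produces the eigenvalue $\lambda=i\nu_0=\mp b+ia$ of (\ref{D7.2.0}) off the imaginary axis. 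For $k\in\{1,n-1\}$ one has $\det m_{0k}(\nu)=\omega\mu(\nu-1)^{2}d_1(\nu)$ with $d_1$ again a real quartic; here the analysis of $d_1(\mu,\nu)=0$ shows that for $n\le6$ the quartic $d_1$ has at most two real roots for every $\mu>0$, so that block always carries an eigenvalue off the imaginary axis, while for $n\ge7$ four real roots require $\mu>\max\{m_+^{(1)},m_-^{(1)}\}$.

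It then remains to collect the thresholds. From $m_{0(n-k)}(\nu)=\bar m_{0k}(-\nu)$ one gets $d_{n-k}(\mu,\nu)=d_k(\mu,-\nu)$, so the graph of $d_{n-k}(\mu,\nu)=0$ is the reflection of that of $d_k$ across $\nu\mapsto-\nu$ and hence $m_-^{(k)}=m_+^{(n-k)}$; consequently the conditions ``$\mu>m_+^{(k)}$ and $\mu>m_-^{(k)}$ for all $k$'' collapse to the single condition $\mu>m_\ast:=\max_{1\le k\le n}m_+^{(k)}$ (the block $k=n$ imposing nothing, and $k=n/2$, when $n$ is even, nothing beyond $m_+^{(n/2)}=m_-^{(n/2)}$ since $\gamma_{n/2}=0$). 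This proves that spectral stability forces $n\ge7$ and $\mu>m_\ast$, and conversely that for $n\ge7$, $\mu>m_\ast$ every $\det m_{jk}(\nu)$ has only real roots, so $x_0$ is spectrally stable. Finally, when $n\ge7$ and $\mu>m_\ast$ the only non‑simple roots of $\det M(\nu)$ are at $\nu=0$ (from $m_{0n}$ and $m_{1n}$) and at $\nu=\pm1$ (double roots from $m_{0,1}$ and $m_{0,n-1}$, together with the simple root of $m_{0n}$); the $\nu=0$ degeneracies are exactly the vertical translations, the rotation about the centre of mass and the frequency rescaling, and the $\nu=\pm1$ degeneracies are exactly the planar rigid motions of the configuration. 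After restricting these motions the linearised operator acquires a full eigenbasis of the form $e^{i\nu_0t}v_0$ with $\nu_0\in\mathbb{R}$, and $x_0$ is linearly stable.

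The step I expect to cost the most work is the precise threshold analysis for $k\in\{1,n-1\}$: the $3\times3$ block $m_{0,1}$ carries the extra factor $\omega\mu(\nu-1)^{2}$ and the zero set of $d_1(\mu,\nu)=0$ has genuinely different shapes in the regimes $n\le6$, $7\le n\le11$ and $n\ge12$ (with the roles of $m_+$ and $m_-$ exchanged), so one must verify in each case that ``four real roots of $d_1$'' is equivalent to ``$\mu$ above the larger of the two minima'' and that no extra real root appears near $\nu=\pm1$ once $\mu>m_\ast$, and that the maximum $m_\ast$ is indeed attained among the $m_+^{(k)}$. A secondary, purely computational point, needed only for the linear‑stability assertion, is to identify the Jordan structure at $\nu=\pm1$ with the planar translations and to check semisimplicity of $M(\nu)$ on the complement of the listed motions, which is a direct rank computation with the explicit $2\times2$ and $3\times3$ blocks $m_{jk}$.
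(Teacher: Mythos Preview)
Your proposal is correct and follows essentially the same route as the paper: count the real roots of each block $\det m_{jk}(\nu)$ using the spectral analysis already performed, identify the Jordan blocks at $\nu=0$ (from $m_{0n}$ and $m_{1n}$) and at $\nu=\pm1$ (from $m_{0,1}$, $m_{0,n-1}$) with the natural symmetries, and restrict them out to obtain linear stability. Your version is in fact slightly more explicit than the paper's on two points: you justify, via the relation $d_{n-k}(\mu,\nu)=d_k(\mu,-\nu)$ and hence $m_-^{(k)}=m_+^{(n-k)}$, why the threshold $m_\ast=\max_k m_+^{(k)}$ already dominates all the $m_-^{(k)}$'s (the paper uses this without comment), and you name the $\nu=\pm1$ degeneracy as the planar translation of the whole configuration in fixed coordinates, which is what the paper's vector $v=n^{-1/2}((1,i),\dots,(1,i))$ and its solutions $(a+bt)e^{\sqrt{\omega}t}v$ represent, even though the paper's closing sentence phrases this as ``the restriction on the rotations.''
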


\begin{proof}
For $k\in\{2,...,n-2\}$, we have proved that the $2\times2$ matrix $m_{k}%
(\nu)$ is non-invertible at four values of $\nu$, when $\mu>m_{\ast}$. For
$k=n$, the $2\times2$ matrix $m_{n}(\nu)$ is non-invertible at one positive
and one negative value, and $0$ is a double root, with a kernel generated by
the vector $(0,1)$. Applying the transformation $P$, one gets the eigenvectors
$Jx_{0}$ and $2x_{0}-3\sqrt{\omega}tJx_{0}$. Now, consider the relation
$\nabla V(\omega a^{-3},ax_{0})=0$, for any positive $a$, and differentiate
it, with respect to $a$, in order to obtain that $D^{2}V(x_{0})x_{0}%
=3\mathcal{M}\omega x_{0}$. Since $Jx_{0}$ generates the kernel of
$D^{2}V(x_{0})$, one verifies that the two vectors are solutions of the
linearized equation. Hence, the behavior at $\nu=0$ is a consequence of the
symmetry of rotations in the $(x,y)$-plane and scaling. Thus, this does not
destabilize the polygonal equilibrium, if one asks for this natural restriction.

For $k\in\{1,n-1\}$, we proved that the $3\times3$ matrix $m_{k}(\nu)$ is
non-invertible at three values, with $\nu=1$ as a double root, for
$n\in\{3,...,6\}$. Hence, since the total count of zeros is $4n$, the
polygonal equilibrium is never spectrally stable for $n\in\{3,...,6\}$.
Similarly for $n=2$, the $4\times4$ matrix $m_{1}(\nu)$ has a characteristic
determinant with six real and two purely imaginary zeros, giving an
exponential instability.

For $n\geq7$, we proved that $m_{1}(\nu)$ is non-invertible at six values of
$\nu$, with $\nu=1$ being a double root, if $\mu>m_{\ast}$. Therefore, the
polygonal equilibrium is spectrally stable only if $\mu>m_{\ast}$, since one
has the complete count of $4(n+1)$ roots for the planar spectrum.

Now, it is easy to prove that, for $\nu=1$, the matrix $m_{1}(1)$ has a kernel
generated by the vector $(\sqrt{2/n},1,i)$, which gives, under the
transformation $P$, the vector $v=1/\sqrt{n}((1,i),(1,i),...,(1,i))$. It is
then easy to prove that $(a+bt)e^{\sqrt{\omega}t}v$ are solutions of the
linearized equations. In fact, if one takes the sum of all the $2\times2$
matrices $\sum_{j}A_{ij}=0$, one has that $v$ is in the kernel of the Hessian
of the Newton's potential, and $(a+bt)e^{\sqrt{\omega}t}(1,i)$ generates the
four (in its real and imaginary parts) solutions of the equation
\[
\ddot{x}+2\sqrt{\omega}J\dot{x}-\omega x=0.
\]
Furthermore, since Newton's potential is invariant under rotations in the
plane of all the masses, then its gradient is equivariant and its Hessian has
$v$ as generator of its kernel. This explains the restriction on the rotations.
\end{proof}

Assume the assertion, in the paper \cite{Ro00}, that the largest of the
$m_{+}$'s corresponds to the block $m_{n/2}(\nu)$.

In the Theorem \ref{E6.3.1}, if we had proved that there are only three
solutions for $d_{k}(\nu)=0$ and $d_{k}^{\prime}(\nu)=0$, then these solutions
would have been $m_{0}$ and $m_{\pm}$. In that case, the only points where the
Morse index changes are those of the Theorem \ref{E6.3.1}. We can prove this
result for $k=n/2$, and large $n$, since $\gamma_{n/2}=0$. In fact, one has
that $m_{+}=m_{-}$, and $m_{0}=\mu_{n/2}$ if $n$ is large enough. Since
$\gamma_{n/2}=0$, the graph for $d_{n/2}$ is symmetric with respect to the
$\mu$-axis and one has two curves for $\mu$ as a function of $\nu$.

In order to compute $m_{+}=m_{-}$, notice that%
\[
d_{k}^{\prime}(\nu)=2\nu\omega^{2}(2\nu^{2}+\omega^{-1}(2\alpha_{n/2}%
-2s_{1}-\mu))=0\text{.}%
\]
Substituting $\nu^{2}=-\omega^{-1}(2\alpha_{n/2}-2s_{1}-\mu)/2$ in $d_{k}%
(\nu)=0$, we obtain%
\[
(2\alpha_{n/2}-2s_{1}-\mu)^{2}-4(a_{n/2}+\mu b_{n/2})=0\text{.}%
\]
From this quadratic equation, we get that $m_{+}=b+\sqrt{b^{2}+c}$, with
\[
b=2(b_{n/2}+\alpha_{n/2}-s_{1})\text{ and }c=4(a_{n/2}-(\alpha_{n/2}%
-s_{1})^{2}).
\]
Next, we wish to find the estimate of $m_{+}$, given in \cite{Ro00}, on the
stability of Saturn's rings.

\begin{lemma}
\cite{Ro00}: the sums $s_{k}$ satisfy the limits
\[
\lim_{n\rightarrow\infty}\frac{s_{1}}{n\ln n}=\frac{1}{2\pi}\text{ and }%
\lim_{n\rightarrow\infty}\frac{s_{n/2}}{n^{3}}=\sigma=\frac{1}{2\pi^{3}}%
\sum_{k=1}^{\infty}\frac{1}{(2k-1)^{3}}\text{. }%
\]

\end{lemma}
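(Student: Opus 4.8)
The plan is to establish the two limits separately by elementary asymptotics of the underlying trigonometric sums, in each case isolating the contribution of the indices $j$ with $j/n$ close to $0$ or to $1$, where the summand blows up, everything else being of lower order. Throughout this section we are in the Newton case $\alpha=2$, so with $\zeta=2\pi/n$ the relevant sums are
\[
s_{1}=\frac{1}{4}\sum_{j=1}^{n-1}\frac{1}{\sin(j\pi/n)},\qquad s_{n/2}=\frac{1}{4}\sum_{j=1}^{n-1}\frac{\sin^{2}(j\pi/2)}{\sin^{3}(j\pi/n)},
\]
the latter because $kj\zeta/2=j\pi/2$ when $k=n/2$.

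For the first limit I would fold the sum by the reflection $\sin(j\pi/n)=\sin((n-j)\pi/n)$, obtaining $\sum_{j=1}^{n-1}\csc(j\pi/n)=2\sum_{1\le j\le n/2}\csc(j\pi/n)+O(1)$, the $O(1)$ accounting for a possible middle term. On the range $1\le j\le n/2$ one has $j\pi/n\in(0,\pi/2]$, and since $x\mapsto\csc x-x^{-1}$ extends continuously to $[0,\pi/2]$ it is bounded there, so $\csc(j\pi/n)=\dfrac{n}{\pi j}+O(1)$ uniformly in $j$. Summing and using $\sum_{1\le j\le n/2}j^{-1}=\ln n+O(1)$ gives $s_{1}=\dfrac{n\ln n}{2\pi}+O(n)$, hence $s_{1}/(n\ln n)\to 1/(2\pi)$.

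For the second limit, since $\sin^{2}(j\pi/2)$ equals $1$ for odd $j$ and $0$ for even $j$, one has $s_{n/2}=\dfrac14\sum_{j\ \mathrm{odd}}\csc^{3}(j\pi/n)$, and the reflection $j\mapsto n-j$ (an involution of the odd indices, as $n$ is even) reduces this to $s_{n/2}=\dfrac12\sum_{j\ \mathrm{odd},\,j\le n/2}\csc^{3}(j\pi/n)+O(1)$. Writing $j=2m-1$, I would then pass to the limit inside the series: for each fixed $m$ one has $n^{-3}\csc^{3}((2m-1)\pi/n)\to\pi^{-3}(2m-1)^{-3}$, while Jordan's inequality $\sin x\ge 2x/\pi$ on $(0,\pi/2]$ gives the $m$-summable, $n$-uniform majorant $n^{-3}\csc^{3}((2m-1)\pi/n)\le\tfrac18(2m-1)^{-3}$. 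Dominated convergence for series then yields $n^{-3}\sum_{j\ \mathrm{odd},\,j\le n/2}\csc^{3}(j\pi/n)\to\pi^{-3}\sum_{m\ge1}(2m-1)^{-3}$, so $s_{n/2}/n^{3}\to\dfrac{1}{2\pi^{3}}\sum_{m\ge1}(2m-1)^{-3}=\sigma$.

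The computation is otherwise routine; the only care needed is the bookkeeping of the constant $\tfrac14$, the factor $2$ from the reflection symmetry, the restriction to odd indices, and the harmless middle term $j=n/2$ when $n/2$ is odd. The one conceptual point worth flagging is that the $s_{n/2}$ asymptotics is governed by boundedly many summands near the endpoints, so the correct tool is dominated convergence for series, \emph{not} a Riemann-sum argument (which is what one might naively expect for a sum of $n$ terms). If a rate is wanted one can instead expand $\csc^{3}x=x^{-3}+\tfrac12 x^{-1}+O(x)$ on $(0,\pi/2]$ and sum term by term to get $s_{n/2}=\dfrac{n^{3}}{2\pi^{3}}\sum_{m\ge1}(2m-1)^{-3}+O(n\ln n)$, and likewise $s_{1}=\dfrac{n\ln n}{2\pi}+O(n)$.
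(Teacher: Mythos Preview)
Your proof is correct. Both limits are established by sound elementary arguments: the folding by $j\mapsto n-j$, the uniform bound $\csc x - x^{-1}=O(1)$ on $(0,\pi/2]$ for the first limit, and Jordan's inequality together with dominated convergence for series for the second, are all valid and cleanly executed; the bookkeeping of the factors $\tfrac14$, the doubling from symmetry, and the middle term is handled correctly.

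As for comparison with the paper: the paper gives no proof of this lemma at all. It is stated as a quoted result from \cite{Ro00} and used without argument (the text immediately following the lemma applies it to estimate $s_{n/2-1}$ and $m_{+}$). So your write-up supplies something the paper omits --- a short, self-contained derivation of the Roberts asymptotics --- and your remark that the $s_{n/2}$ limit is governed by finitely many boundary terms (hence dominated convergence rather than a Riemann sum) is exactly the right conceptual point.
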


From proposition (\ref{EA.1.0}), given in the appendix, where $\bar{s}_{l}$ is
defined and $\alpha$ is taken to be $2$, we have
\[
s_{n/2}-s_{n/2-1}=(n-1)s_{1}-\sum_{l=1}^{n/2-1}\bar{s}_{l}\text{.}%
\]
Using approximations by integrals, we can prove that $\bar{s}_{l}/n$ is finite
and that $s_{1}/n\rightarrow\infty$ when $n\rightarrow\infty$. Since $\bar
{s}_{l}=O(s_{1})$, when $n\rightarrow\infty$, we have that%
\[
s_{n/2}-s_{n/2-1}=O(ns_{1})=O(n^{2}\ln n)=o(n^{3})\text{.}%
\]
Consequently, we have that $\lim_{n\rightarrow\infty}s_{n/2-1}/n^{3}=\sigma$.

Using these limits and $\gamma_{n/2}=0$, we have that%
\[
\alpha_{n/2}/n^{3}\rightarrow\sigma/2\text{ and }\beta_{n/2}/n^{3}%
\rightarrow(3/2)\sigma\text{,}%
\]
when $n\rightarrow\infty$. From the definitions of $a_{n/2}$ and $b_{n/2}$, we
obtain that $a_{n/2}/n^{6}\rightarrow-2\sigma^{2}$ and $b_{n/2}/n^{3}%
\rightarrow6\sigma$. Therefore, $b/n^{3}\rightarrow13\sigma$ and
$c/n^{6}\rightarrow-9\sigma^{2}$, when $n\rightarrow\infty$. We conclude that%
\[
\lim_{n\rightarrow\infty}m_{+}/n^{3}=(13+4\sqrt{10})\sigma\text{.}%
\]
This limit is the one found in \cite{Ro00} in order to estimate the stability
of Saturn's rings.

\subsection{Non-abelain actions}

In rotating coordinates, the full group is $T^{2}\cup\bar{\kappa}T^{2}$ for a
general relative equilibrium, and the additional group of permutations $S_{n}$
for the polygonal relative equilibrium. The action of the element $\bar
{\kappa}\in\mathbb{Z}_{2}$ is given by
\[
\bar{\kappa}x_{j}(t)=diag(1,-1,1)x_{j}(-t).
\]
If one fixes the bifurcation map by the action of $\bar{\kappa}$, then the
equilibrium must be a collinear central configuration, which is not the case
of the polygonal equilibrium, or of the more general setting.

However, the polygonal equilibrium is fixed by the action
\[
\tilde{\kappa}x_{j}(t)=diag(1,-1,1)x_{n-j}(-t)\text{,}%
\]
which is a coupling of $\bar{\kappa}$ with the permutation of the bodies in
$D_{n}$. If one restricts the problem to the fixed-point subspace of
$\tilde{\kappa}$, then one may prove bifurcation of periodic solutions, but
only for the blocks $m_{0k}$ for $k=1,...,n$ and $m_{1k}$ for $k=n/2,n$.

Specifically, the bifurcation operator $f(x)$ is equivariant under the action
of the group
\[
(\mathbb{Z}_{n}\times T^{2})\times\tilde{\kappa}(\mathbb{Z}_{n}\times
T^{2})\text{,}%
\]
where the action of $\tilde{\kappa}$ is given, for the $n$ equal mass bodies,
modulus $n$, by
\[
\tilde{\kappa}x_{j}(t)=Rx_{n-j}(-t)\text{ with }R=diag(1,-1,1)\text{,}%
\]
while, for the central body, $j=0$, the action is
\[
\tilde{\kappa}x_{0}(t)=Rx_{0}(-t)\text{.}%
\]

Let $z_{k}$ be the coordinate for the block $m_{0k}$.The action of
$\tilde{\kappa}$ on $z_{k}$ is given by
\[
\tilde{\kappa}z_{k}=R\bar{z}_{k}\text{,}%
\]
where $R=diag(1,-1)$ for $k\notin\{1,n-1\}$ and $R=diag(1,1,-1)$ for
$k\in\{1,n-1\}$. Now, since the polygonal equilibrium $\bar{a}$ is fixed by
$\tilde{\kappa}$, then the linearization of the bifurcation map $f(x)$,
represented by the blocks $m_{0k}$, must be $\mathbb{Z}_{2}\mathbb{(}%
\tilde{\kappa})$-equivariant. This implies that $m_{0k}=R\bar{m}_{0k}R$.

Now, if we restrict the bifurcation operator $f(x)$ to the fixed-point space
of $\tilde{\kappa}$, then we get that the coordinates $z_{k}$ of the blocks
$m_{0k}$ satisfy
\[
z_{k}=R\bar{z}_{k}\text{.}%
\]
One may see that $M_{0}(\nu)$, in the fixed-point space of $\tilde{\kappa}$ is
equivalent to the matrix
\[
(m_{01},...,m_{0n})\text{,}%
\]
where $m_{0k}$ is defined in the space $\mathbb{R}\times i\mathbb{R}$.
Applying degree in the fixed-point space of $\tilde{\kappa}$, one proves
bifurcation of periodic solutions for the blocks $m_{0k}$. Moreover, since the
action of $\tilde{\kappa}$ and
\[
(\zeta,\zeta,-k\zeta)\in\mathbb{Z}_{n}\times S^{1}\times S^{1}%
\]
on these subspaces in $m_{0n}$ is trivial, then one can deduce that the
equilibria will have periodic solutions with the symmetries that we described
in addition to those of $\mathbb{Z}_{2}(\tilde{\kappa})$.

In the case of the spatial blocks, let $z_{k}$ be the the coordinate of the
block $m_{1k}$. The action on this coordinate is given by
\[
\tilde{\kappa}(z_{k},z_{n-k})=(\bar{z}_{n-k},\bar{z}_{k})\text{,}%
\]
for $k=1,...,n-1$ and $\tilde{\kappa}{z_{n}}=\bar{z}_{n}$. Since the blocks
$m_{1k}$ must be $\mathbb{Z}_{2}\mathbb{(}\tilde{\kappa})$-equivariant, then
one gets the equality
\[
m_{1k}=\bar{m}_{1(n-k)}\text{.}%
\]
Thus, when restricting the bifurcation operator $f(x)$ to the fixed-point
space of $\tilde{\kappa}$, the coordinates $z_{k}$ of the blocks $m_{1k}$ are
related by $z_{n-k}=\bar{z}_{k}$, and $z_{n}=\bar{z}_{n}$.

Using the isomorphism $Tz_{k}=(z_{k},\bar{z}_{k})$ for $k\notin\{n/2,n\}$, the
inclusion of $\mathbb{R}$ in $\mathbb{C}$ for $k=n/2$, and the inclusion of
$\mathbb{R}^{2}$ in $\mathbb{C}^{2}$ for $k=n$, one may prove that $M_{1}%
(\nu)$ in the fixed-point space of $\tilde{\kappa}$, is equivalent to the matrix%

\[
(m_{11},m_{12},...,m_{1(n/2)},m_{1n})\text{,}%
\]
defined in the space $\mathbb{R}^{2}$ for $m_{1(n/2)}$ and $m_{1n}$ and
$\mathbb{C}^{2}$ for the other blocks.

Applying a degree argument in the fixed-point space of $\tilde{\kappa}$, one
may prove bifurcation of periodic solutions for the blocks given by $m_{1n}$
and $m_{1n/2}$. The remaining blocks $m_{0k}$ for $k\notin\{n/2,n\}$ are
defined on complex subspaces, then they have always non-negative determinants,
as real matrices. Thus, there is no change of a standard degree in the
isotropy subspace.

Analytical studies with normal forms of high order and additional hypotheses
of non-resonance are proposed in \cite{CF08} for these cases. One could also
use the gradient structure and apply the results for bifurcation given in
\cite{Iz95}, p.100, based on Conley index. However, these results do not
provide the proof of the existence of a global continuum, something which
follows from the application of the orthogonal degree.

This fact implies that one may not use a classical degree argument or other
simple analytical proofs, and if one wishes to restrict the problem to one of
the isotropy subspaces described in this paper, one has to use anyway the
orthogonal degree. Our approach enables us to treat the complete problem in
one single system.

\subsection{The problem of $n$-charges}

We wish to analyze the movement of $n$ particles with charge $-1$ interacting
with a fixed nucleus with charge $q>0$. We suppose that the gravitational
forces are much smaller than Coulomb's forces. Thus, these equations may be a
classical model for the atom. Since electrons and protons have equal charge
with different signs, then $q=n$ for a non-ionized atom.

Let $x_{j}$ be the positions of the negative charges for $j\in\{1,...,n\}$,
then the equations describing the movement of the charges are%
\[
\ddot{x}_{j}=-q\frac{x_{j}}{\left\Vert x_{j}\right\Vert ^{3}}+\sum_{i=1~(i\neq
j)}^{n}\frac{x_{j}-x_{i}}{\left\Vert x_{j}-x_{i}\right\Vert ^{3}}\text{,}%
\]
where the first term represents the interaction with the fixed nucleus.

In rotating coordinates, $x_{j}(t)=e^{\sqrt{\omega}t\bar{J}}u_{j}(t)$, the
equations become%
\begin{align*}
\ddot{u}_{j}+2\sqrt{\omega}\bar{J}\dot{u}_{j}  &  =\nabla\tilde{V}(u)\text{
with}\\
\tilde{V}(u)  &  =\frac{\omega}{2}\sum_{j=1}^{n}\left\Vert \bar{I}%
u_{j}\right\Vert ^{2}-\sum_{i<j}\frac{1}{\left\Vert u_{j}-u_{i}\right\Vert
}+\sum_{j=1}^{n}\frac{q}{\left\Vert u_{j}\right\Vert }\text{.}%
\end{align*}
Let $u$ be $(u_{1},...,u_{n})$, then%
\[
\ddot{u}+2\sqrt{\omega}\mathcal{\bar{J}}\dot{u}=\nabla\tilde{V}(u)\text{.}%
\]

We wish to show the similarities of this problem with the $(n+1)$-body
problem. If we set $u_{0}=0$ in the potential of the $(n+1)$ bodies, the
potential for the bodies is%
\[
V(0,u)=\frac{\mu+s_{1}}{2}\sum_{j=1}^{n}\left\Vert \bar{I}u_{j}\right\Vert
^{2}+\sum_{j=1}^{n}\frac{\mu}{\left\Vert u_{j}\right\Vert }+\sum_{0<i<j}%
\frac{1}{\left\Vert u_{j}-u_{i}\right\Vert }\text{.}%
\]
We have proved before that the polygonal equilibrium $(0,a_{1},...,a_{n})$,
with $a_{j}=e^{ij\zeta}$, is a critical point of the potential $V(0,u)$.

If we choose $\omega=q-s_{1}$, in $\tilde{V}(u)$, and put $\mu=-q$ in $V(0,u)
$, then the potential for the $n$-charges satisfies%
\[
\tilde{V}(u)=-V(0,u).
\]
From this equality, we have that $\bar{a}=(a_{1},...,a_{n})$ is a critical
point of $\tilde{V}(u)$, with $\omega=q-s_{1}>0$, because $\bar{a}$ is a
critical point of $V(0,u)$.

Thus, we may perform a similar analysis to the bifurcation of periodic
solutions from the polygonal equilibrium in the body problem. The only
difference is in the spectrum. Hence, we shall focus on the spectrum of the
polygonal equilibrium for the charges.

The block corresponding to the Fourier modes of the charges is%
\[
\tilde{M}(\nu)=\nu^{2}I-2\nu\sqrt{\omega}(i\mathcal{\bar{J})}+D^{2}\tilde
{V}(\bar{a})\text{.}%
\]
Using the computation of $D^{2}V$ and the equality $D^{2}\tilde{V}%
(u;q)=-D^{2}V(0,u;-q)$, and after the change of variables, we obtain the
decomposition of $\tilde{M}(\nu)$ into two blocks. Next, the planar spectrum
is decomposed into the blocks%
\[
\tilde{m}_{0k}(\nu)=\nu^{2}I-2\nu\sqrt{\omega}(iJ)-B_{k}(-q)\text{,}%
\]
for $k\in\{1,...,n\}$, where $B_{k}(\mu)$ are the matrices given in the body
problem%
\[
B_{k}(\mu)=(3/2)(I+R)\mu+(s_{1}+\alpha_{k})I-\beta_{k}R-\gamma_{k}iJ\text{.}%
\]
While the spatial spectrum is decomposed into the blocks%
\[
\tilde{m}_{1k}(\nu)=\nu^{2}+(-q+s_{k})
\]
for $k\in\{1,...,n\}$.

For $k\in\{1,...,n\}$, if $q>s_{k}$, then the block $\tilde{m}_{1k}(\nu)$
changes its Morse index at the value
\[
\nu_{1k}=\sqrt{q-s_{k}}%
\]
with $\eta_{1k}(\nu_{1k})=1$.

\begin{proposition}
For each $k\in\{1,...,n\}$, the block $\tilde{m}_{0k}(\nu)$ changes its Morse
index at a positive value $\nu_{k}(\mu)$ for $q\in(s_{1},\infty)$ with%
\[
\eta_{0k}(\nu_{k})=1.
\]

\end{proposition}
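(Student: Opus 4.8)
The plan is to analyze the Hermitian $2\times 2$ block
\[
\tilde m_{0k}(\nu)=\nu^2 I-2\nu\sqrt{\omega}(iJ)-B_k(-q)=aI+bR+c\,(iJ),\qquad \omega=q-s_1,
\]
where, for $\alpha=2$, the formula for $B_k$ recalled just above gives $a=\nu^2+\tfrac32 q-s_1-\alpha_k$, $b=\tfrac32(\omega+s_k)$ and $c=\gamma_k-2\nu\sqrt{\omega}$; this is exactly a planar polygonal block $m_k$ with the sign reversal $B_k\mapsto-B_k(-q)$. Its eigenvalues are $a\pm\sqrt{b^2+c^2}$, the Morse number $n_{0k}(\nu)$ counts the negative ones, and $\det\tilde m_{0k}(\nu)=a^2-b^2-c^2$ expands to the quartic
\[
\tilde d_k(\nu)=\nu^4+(2s_1-2\alpha_k-q)\,\nu^2+4\sqrt{\omega}\,\gamma_k\,\nu+(a_k-q\,b_k),
\]
with positive leading coefficient.

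I would then record three sign facts. (i) The reference sign is $\sigma=+1$: the $SO(2)$–rotation kernel at $\bar a$ lies in the block $k=n$, where $B_n(-q)=-\tfrac32(I+R)\omega$, so $\tilde m_{0n}(0)=diag(3\omega,0)$ and its determinant on the complement of its kernel is $3\omega>0$. (ii) $b=\tfrac32(\omega+s_k)>0$ whenever $q>s_1$. (iii) $\tilde d_k(0)=a_k-q\,b_k<0$ whenever $q>s_1$: indeed $b_k>0$ and $a_k<s_1b_k$, which for $k\in\{2,\dots,n-2\}$ is the positivity of $s_1b_k-a_k$ established in the lemma of the analysis of $m_k(\nu)$, and for $k\in\{1,n-1\}$ (where $\beta_1=0$ and $\alpha_1=\gamma_1=\tfrac14 s_2$) is the direct identity $s_1b_1-a_1=2s_1^2+\tfrac14 s_1 s_2>0$.

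The bifurcation value is then produced by a monotonicity argument, with no full count of the roots of $\tilde d_k$. The coefficient $a(\nu)=\nu^2+(\text{const})$ is nondecreasing on $[0,\infty)$ with $a(\nu)\to+\infty$, so it has at most one zero there; let $\nu_0$ be that zero if it exists and $\nu_0=0$ otherwise, so that $a>0$ on $(\nu_0,\infty)$. Then $\tilde d_k(\nu_0)<0$ — it equals $a_k-qb_k$, negative by (iii), when $\nu_0=0$, and $-(b^2+c^2)$, negative by (ii), when $a(\nu_0)=0$ — while $\tilde d_k$ is positive for $\nu$ large. On $(\nu_0,\infty)$ the larger eigenvalue $a+\sqrt{b^2+c^2}$ is strictly positive, hence there $n_{0k}=1$ exactly when $\tilde d_k<0$; taking $\nu_k$ to be the first value in $(\nu_0,\infty)$ at which $\tilde d_k$ changes sign — necessarily from $-$ to $+$ — we get $n_{0k}=1$ just to the left and $n_{0k}=0$ just to the right, so $\eta_{0k}(\nu_k)=\sigma(1-0)=1$. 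For $k=n$ one has directly $\tilde d_n(\nu)=\nu^2(\nu^2-\omega)$, which changes sign once on $(0,\infty)$ at $\nu_n=\sqrt{\omega}$, with $\eta_{0n}(\nu_n)=1$; and the spatial blocks $\tilde m_{1k}(\nu)=\nu^2+(s_k-q)$ were already handled above the proposition.

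The only delicate point is the bookkeeping behind the three sign facts — that $b>0$, that $a$ is monotone in $\nu$, and that $a_k<s_1b_k$ for every $k\in\{1,\dots,n-1\}$, in particular at the boundary indices $k=1,n-1$ where the body–problem block had been $3\times 3$ rather than $2\times 2$. Everything else follows at once, and, since the proposition only asserts existence of one bifurcation value with index $1$, there is no need to control whether the Morse index of $\tilde m_{0k}$ oscillates elsewhere on $(0,\infty)$.
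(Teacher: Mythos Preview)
Your argument is correct and follows essentially the same route as the paper: both compute $\sigma=1$ from $\tilde m_{0n}(0)$, both use $\tilde d_k(0)=a_k-qb_k<0$ together with $\tilde d_k(+\infty)>0$ to force a sign change, and both read off $\eta_{0k}(\nu_k)=1$ from $n_{0k}(0)=1$, $n_{0k}(\infty)=0$.

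Where you go a little further than the paper is in two places. First, you make explicit why $a_k-qb_k<0$ holds for every $k\in\{1,\dots,n-1\}$, including the boundary indices $k=1,n-1$: in the body problem those blocks were $3\times3$ with different constants $a_1,b_1$, whereas here the nucleus is fixed so all blocks are $2\times2$, and you verify directly that $s_1b_1-a_1=2s_1^2+\tfrac14 s_1s_2>0$. The paper simply asserts ``$a_k-qb_k$ is negative'' without this check. Second, your eigenvalue decomposition $a\pm\sqrt{b^2+c^2}$ and the restriction to $(\nu_0,\infty)$ where $a>0$ is a clean way to guarantee that the first sign change of $\tilde d_k$ really corresponds to a Morse jump $1\to0$ rather than $1\to2$; the paper's ``$n_k(0)=1$, $n(\infty)=0$, hence $\eta_k(\nu_k)=1-0$'' is correct in spirit but tacitly assumes there is only one relevant transition. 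These are refinements of the same argument rather than a different method.
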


\begin{proof}
For $k=n$, we have that $\det\tilde{m}_{0n}(\nu)=\omega^{2}\nu^{2}(\nu^{2}-1)
$, thus $\det\tilde{m}_{0n}(\nu)$ is zero only at $\nu_{n}=1$. For
$k\in\{1,...,n-1\}$, the determinant of $\tilde{m}_{0k}(\nu)$ is
\[
d_{k}(q,\nu)=\omega^{2}\nu^{4}-(2\alpha_{k}+\omega-s_{1})\omega\nu^{2}%
+4\omega\gamma_{k}\nu+a_{k}-qb_{k}%
\]
with $\omega=q-s_{1}$. Since $d_{k}(q,0)=a_{k}-qb_{k}$ is negative and
$d_{k}(\nu)$ is positive for large $\nu$, then there is a positive value
$\nu_{k}$ where $\tilde{m}_{0k}(\mu)$ changes its Morse index. For $q>s_{1}$,
we have $\sigma=1$ since $\tilde{m}_{0n}(0)=diag(3(q-s_{1}),0)$.

Since $d_{k}(q,0)=a_{k}-qb_{k}$ is negative, then $n_{k}(0)=1$, and since
$n(\infty)=0$, then $\eta_{k}(\nu_{k})=1-0$.
\end{proof}

\begin{theorem}
For $n\geq2$ and $q\in(s_{1},\infty)$, the polygonal equilibrium has a global
bifurcation of planar periodic solutions with isotropy group $\mathbb{Z}%
_{n}\left(  \zeta,\zeta,-k\zeta\right)  \times\mathbb{Z}_{2}(\kappa)$ for each
$k\in\{1,...,n\}$. If $q>s_{k}$, there is a global bifurcation of spatial
periodic solutions with isotropy group $\mathbb{Z}_{n}\left(  \zeta
,\zeta,-k\zeta\right)  \times\mathbb{Z}_{2}(\kappa,\pi)$ for each
$k\in\{1,...,n\}$ and starting at $\nu_{1k}$.
\end{theorem}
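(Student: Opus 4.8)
The plan is to derive the theorem directly from the orthogonal--degree bifurcation theorem (\cite{IzVi03}, Remark 3.5, p.\ 259), in exactly the same way as the bifurcation theorems for the gravitational polygonal equilibrium were obtained, once its hypotheses have been checked in the charge setting. All the computational content is already in the preceding proposition, which records the splitting $\tilde M(\nu)=\mathrm{diag}(\tilde m_{0k}(\nu))\oplus\mathrm{diag}(\tilde m_{1k}(\nu))$ together with the bifurcation indices $\eta_{0k}(\nu_k)=1$ for $q\in(s_1,\infty)$ and $\eta_{1k}(\nu_{1k})=1$ for $q>s_k$, with $\nu_{1k}=\sqrt{q-s_k}$. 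Since these integers are non-zero, it remains only to verify (i) that the relative--equilibrium orbit $\Gamma\bar a$ is hyperbolic near each bifurcation point, and (ii) that the isotropy subgroups attached to the blocks $W_k$ in $V_0$ and $V_1$ are the claimed ones.

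First I would check hyperbolicity, that is, that $\ker\tilde M(0)$ is one-dimensional and generated by the infinitesimal rotation generator $A_1\bar a=-\mathcal{\bar J}\bar a$. As in the gravitational case, $A_1\bar a$ lies in $V_0$ and corresponds, under the change of variables, to the vector $e_2$ in the kernel of $\tilde m_{0n}(0)=-B_n(-q)=\mathrm{diag}(3(q-s_1),0)$; from this same block one reads off $\sigma=\mathrm{sgn}(3(q-s_1))=1$ for $q>s_1$. The remaining planar blocks $\tilde m_{0k}(0)=-B_k(-q)$, $k=1,\dots,n-1$, must be shown invertible: by \cite{GaIz11}, Proposition 23, the only zero of $\det B_k$ relevant to its analysis sits at $\mu_k\in(s_1,\infty)$, while here the parameter equals $\mu=-q<0$, so the blocks stay invertible. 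For the vertical directions there is no conserved linear momentum — the nucleus is fixed — so no restriction to $\mathcal W$ is needed, and $\tilde m_{1k}(0)=s_k-q$ is invertible for every $k$ except at the finitely many resonant values $q=s_k$; at such a value $\nu_{1k}=0$ and there is nothing to assert, while for the remaining global-continuation statement one simply excludes this finite set.

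Then I would invoke the bifurcation theorem: at each positive $\nu_k$ the change $\eta_{0k}(\nu_k)=1\neq0$ of the Morse index of the $S^1$-block $\tilde m_{0k}$ yields a global branch of periodic solutions whose isotropy group is generated by $\kappa$ and $(\zeta,\zeta,-k\zeta)$, i.e.\ $\mathbb{\tilde Z}_n(k)\times\mathbb Z_2$ — the identification of the isotropy of $W_k\subset V_0$ was already made in the analysis of the planar irreducible representations, and is unchanged since $\Gamma$ and its action on $V_0$ are the same for the charges. Likewise, when $q>s_k$, $\eta_{1k}(\nu_{1k})=1\neq0$ produces a global branch with isotropy $\mathbb{\tilde Z}_n(k)\times\mathbb{\tilde Z}_2$ starting from $\nu_{1k}=\sqrt{q-s_k}$ and carrying the spatial ``eight'' symmetry $z_j(t)=-z_j(t+\pi)$. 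For $n=2$ the block $\tilde m_{01}$ is replaced by its $n=2$ form, exactly as in the discussion of the case $n=2$ above, with the same conclusion.

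The step I expect to be the real obstacle is the invertibility of the $\nu=0$ blocks throughout $q\in(s_1,\infty)$, and especially of $\tilde m_{01}(0)=-B_1(-q)$: for $k\in\{1,n-1\}$ the determinant of $B_k(\mu)$ is a genuine cubic in $\mu$ (the $(1,1)$-entry being quadratic), so one must ensure that none of its possibly negative roots equals $-q$ for $q>s_1$; this has to be extracted from the sign analysis of \cite{GaIz11}, Proposition 23, or checked directly, and if a few exceptional $q$ survive they form a finite set that does not affect the global-bifurcation conclusion, the rest being covered either for all other $q$ or by a limiting/degenerate-index argument. The remaining points — that $\sigma$ is constant on the relevant intervals, that $n_{jk}(\infty)=0$ so the index jumps occur at the stated positive values, and that the Poincar\'{e}-section reduction of \cite{IzVi03} applies to the one-dimensional kernel at $\nu_k$ — are routine given the earlier sections.
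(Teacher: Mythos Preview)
Your approach is correct and matches the paper's: the theorem is simply the general bifurcation theorem applied once the preceding proposition has produced $\eta_{0k}(\nu_k)=1$ (for $q>s_1$) and $\eta_{1k}(\nu_{1k})=1$ (for $q>s_k$), together with the identification of isotropy subgroups for the blocks $W_k\subset V_0,V_1$ already carried out earlier.

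The one point you flag as the ``real obstacle'' is not an obstacle at all, and stems from a misreading of the setting. In the charge problem the nucleus is \emph{fixed}; there are only $n$ moving particles and hence no central-body coordinate in the representations $W_1,W_{n-1}$. Consequently \emph{all} planar blocks $\tilde m_{0k}$, including $k=1,n-1$, are $2\times 2$, and the paper writes them uniformly with the $2\times 2$ formula $B_k(\mu)=(3/2)(I+R)\mu+(s_1+\alpha_k)I-\beta_kR-\gamma_k iJ$. The determinant $\det B_k(\mu)$ is therefore linear in $\mu$, not cubic, and hyperbolicity at $\nu=0$ is immediate from $d_k(q,0)=a_k-qb_k$: since $b_k=3(s_1+\alpha_k+\beta_k)>0$ and $a_k-s_1b_k=-c<0$ (for $k=1$ one has $\beta_1=0$, $\alpha_1=\gamma_1$, giving $a_1-s_1b_1=-s_1(2s_1+\alpha_1)<0$), one gets $d_k(q,0)<0$ for every $q>s_1$. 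So no exceptional $q$'s arise, no degenerate-index argument is needed, and the appeal to \cite{GaIz11}, Proposition~23 (which concerns the $3\times3$ block of the $(n{+}1)$-body problem) is irrelevant here. Your observation that the fixed nucleus removes the vertical translation symmetry, so no restriction to $\mathcal W$ is required and $\tilde m_{1k}(0)=s_k-q$ is invertible whenever $q\neq s_k$, is correct.
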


\begin{remark}
It is possible prove that the function $\nu_{k}(\mu)$ decreases from $\nu
_{k}(s_{1})=\infty$ to $\nu_{k}(\infty)=1$, and that $\nu_{k}$ is unique for
$k\in\{2,...,n-2,n\}$, and for $k\in\{1,n-1\}$ if $n\geq7$. In those cases the
bifurcation is non-admissible or goes to another equilibrium. Note also that,
if $q=n$, then one may verify numerically that $s_{1}<q$ only for $n<473$.
\end{remark}

\section{Appendix}

Let $\zeta=2\pi/n$, for a fixed $\alpha\in\lbrack1,\infty)$, we define the
sums $s_{k}$ and $\bar{s}_{k}$ as%
\[
s_{k}=\frac{1}{2^{\alpha}}\sum_{j=1}^{n-1}\frac{\sin^{2}(kj\zeta/2)}%
{\sin^{\alpha+1}(j\zeta/2)}\text{ and }\bar{s}_{k}=\frac{1}{2^{\alpha-2}}%
\sum_{j=1}^{n-1}\frac{\sin^{2}(kj\zeta/2)}{\sin^{\alpha-1}(j\zeta/2)}\text{.}%
\]
Then, the sums $s_{k}$ are always positive and satisfy
\[
s_{k}=s_{n+k}=s_{n-k}.
\]

\begin{proposition}
\label{EA.1.0}The sums $s_{k}$\ satisfy the following three recurrence
formulae
\[
s_{k+1}-2s_{k}+s_{k-1}=2s_{1}-\bar{s}_{k}%
\]
\[
s_{k}=k^{2}s_{1}-\sum_{l=1}^{k-1}l\bar{s}_{k-l}\text{ and }%
\]
\[
s_{k+1} - s_{k}=(2k+1)s_{1}-\sum_{l=1}^{k}\bar{s}_{l}\text{.}%
\]

\end{proposition}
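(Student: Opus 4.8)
The plan is to establish the first recurrence by a direct trigonometric computation on the summand, and then to obtain the second and third recurrences from it purely by telescoping.

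First I would work inside the sum defining $s_{k}$. Using $\sin^{2}\theta=(1-\cos 2\theta)/2$ together with the product-to-sum identity $\cos((k+1)j\zeta)+\cos((k-1)j\zeta)=2\cos(j\zeta)\cos(kj\zeta)$, I would show that the second difference of the numerator satisfies
\[
\sin^{2}\!\Bigl(\tfrac{(k+1)j\zeta}{2}\Bigr)-2\sin^{2}\!\Bigl(\tfrac{kj\zeta}{2}\Bigr)+\sin^{2}\!\Bigl(\tfrac{(k-1)j\zeta}{2}\Bigr)=2\cos(kj\zeta)\,\sin^{2}\!\Bigl(\tfrac{j\zeta}{2}\Bigr).
\]
Then, rewriting $\cos(kj\zeta)=1-2\sin^{2}(kj\zeta/2)$, the right-hand side becomes $2\sin^{2}(j\zeta/2)-4\sin^{2}(kj\zeta/2)\sin^{2}(j\zeta/2)$. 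Dividing by $2^{\alpha}\sin^{\alpha+1}(j\zeta/2)$ and summing over $j=1,\dots,n-1$, the first term contributes $\frac{2}{2^{\alpha}}\sum 1/\sin^{\alpha-1}(j\zeta/2)=2s_{1}$ (using that $s_{1}=\frac{1}{2^{\alpha}}\sum 1/\sin^{\alpha-1}(j\zeta/2)$, which is the $k=1$ case of the definition), while the second term contributes exactly $\bar{s}_{k}$ by the definition of $\bar{s}_{k}$. This yields $s_{k+1}-2s_{k}+s_{k-1}=2s_{1}-\bar{s}_{k}$.

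For the remaining two formulae I would telescope. Setting $D_{k}=s_{k+1}-s_{k}$, the first recurrence reads $D_{k}-D_{k-1}=2s_{1}-\bar{s}_{k}$; noting that $s_{0}=0$ (immediate since $\sin 0=0$), so $D_{0}=s_{1}$, and summing from $1$ to $k$ gives $D_{k}=(2k+1)s_{1}-\sum_{l=1}^{k}\bar{s}_{l}$, which is the third formula. For the second, I would write $s_{k}=\sum_{m=0}^{k-1}D_{m}$, use $\sum_{m=0}^{k-1}(2m+1)=k^{2}$, and interchange the order of summation in $\sum_{m=0}^{k-1}\sum_{l=1}^{m}\bar{s}_{l}=\sum_{l=1}^{k-1}(k-l)\bar{s}_{l}$; the substitution $l\mapsto k-l$ turns this into $\sum_{l=1}^{k-1}l\,\bar{s}_{k-l}$, giving $s_{k}=k^{2}s_{1}-\sum_{l=1}^{k-1}l\,\bar{s}_{k-l}$. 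There is essentially no serious obstacle: the only step that needs care is the trigonometric identity for the second difference of $\sin^{2}(kj\zeta/2)$, and the bookkeeping when interchanging the order of the double sum; the boundary value $s_{0}=0$ and the normalization of $s_{1}$ should also be checked to match the usage elsewhere in the paper.
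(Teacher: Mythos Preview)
Your argument is correct. The only methodological difference from the paper is the order in which the three identities are derived and the choice of trigonometric tool: the paper first proves the third formula $s_{k+1}-s_{k}=(2k+1)s_{1}-\sum_{l=1}^{k}\bar{s}_{l}$ directly, by rewriting $\sin^{2}(kj\zeta/2)/\sin^{2}(j\zeta/2)$ via geometric series in complex exponentials and then simplifying the exponential sum, and afterwards obtains the first and second formulae by differencing and summing. You instead prove the second-difference identity first with real product-to-sum identities and then telescope twice. Both routes are equally short; the complex-exponential approach makes the Dirichlet-kernel structure of $s_{k+1}-s_{k}$ more visible (which the paper reuses later for the monotonicity of $s_{k}$), while your route avoids complex numbers entirely and isolates the key identity $\sin^{2}\frac{(k+1)j\zeta}{2}-2\sin^{2}\frac{kj\zeta}{2}+\sin^{2}\frac{(k-1)j\zeta}{2}=2\cos(kj\zeta)\sin^{2}\frac{j\zeta}{2}$ cleanly. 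Your checks that $s_{0}=0$ and that $s_{1}=2^{-\alpha}\sum_{j}1/\sin^{\alpha-1}(j\zeta/2)$ are exactly the boundary inputs needed.
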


\begin{proof}
Write $s_{k}$ as%
\[
2^{\alpha}s_{k}=\sum_{j=1}^{n-1}\frac{1}{\sin^{\alpha-1}(j\zeta/2)}%
\frac{1-\cos(kj\zeta)}{1-\cos(j\zeta)}\text{.}%
\]
Using geometric series, we obtain that%
\[
\frac{1-\cos(kj\zeta)}{1-\cos(j\zeta)}=\frac{1-e^{ijk\zeta}}{1-e^{ij\zeta}%
}\frac{1-e^{-ijk\zeta}}{1-e^{-ij\zeta}}=\sum_{l=0}^{k-1}\sum_{m=0}%
^{k-1}e^{ij(l-m)\zeta}.
\]
Therefore, the difference of the sums satisfies%
\begin{equation}
2^{\alpha}(s_{k+1}-s_{k})=\sum_{j=1}^{n-1}\frac{1}{\sin^{\alpha-1}(j\zeta
/2)}\sum_{h=-k}^{k}e^{ijh\zeta}\text{.} \label{DA.1.1}%
\end{equation}

Since the sum of exponents is%
\[
\sum_{h=-k}^{k}e^{ijh\zeta}=\sum_{h=-k}^{k}\cos jh\zeta=\sum_{h=-k}%
^{k}(1-2\sin^{2}(jh\zeta/2))=(2k+1)-4\sum_{h=1}^{k}\sin^{2}(jh\zeta/2)\text{,}%
\]
then,%
\[
s_{k+1}-s_{k}=(2k+1)s_{1}-\sum_{h=1}^{k}\sum_{j=1}^{n-1}\frac{\sin^{2}%
(hj\zeta/2)}{2^{\alpha-2}\sin^{\alpha-1}(j\zeta/2)}=(2k+1)s_{1}-\sum_{h=1}%
^{k}\bar{s}_{h}%
\]
Iterating the previous formula we obtain the other two recurrence formulae.
\end{proof}

We have used the results about the $s_{k}$'s for the vortex problem,
$\alpha=1$, in our paper \cite{GaIz12}. Now, we may conclude the following corollary.

\begin{corollary}
Using the fact that $\bar{s}_{k}$ and $4s_{k}-\bar{s}_{k}$ are positive, one
has%
\[
s_{k+1}-2s_{k}+s_{k-1}<2s_{1}\text{ and }s_{k+1}+2s_{k}+s_{k-1}>2s_{1}\text{.}%
\]

\end{corollary}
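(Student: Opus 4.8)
The plan is to read both inequalities straight off the first recurrence formula of Proposition~\ref{EA.1.0}, namely
\[
s_{k+1}-2s_{k}+s_{k-1}=2s_{1}-\bar{s}_{k}.
\]
First I would observe that, since $\bar{s}_{k}>0$, this identity gives at once
\[
s_{k+1}-2s_{k}+s_{k-1}=2s_{1}-\bar{s}_{k}<2s_{1},
\]
which is the first claimed bound. For the second, I would add $4s_{k}$ to both sides of the same recurrence to obtain
\[
s_{k+1}+2s_{k}+s_{k-1}=4s_{k}+2s_{1}-\bar{s}_{k}=(4s_{k}-\bar{s}_{k})+2s_{1},
\]
and then invoke the positivity of $4s_{k}-\bar{s}_{k}$ to conclude $s_{k+1}+2s_{k}+s_{k-1}>2s_{1}$.

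The only substantive input beyond the recurrence is therefore the two sign facts, both of which are immediate from the defining sums. The positivity of $\bar{s}_{k}$ is clear since every summand is nonnegative. For $4s_{k}-\bar{s}_{k}$, a term-by-term computation gives
\[
4s_{k}-\bar{s}_{k}=\frac{1}{2^{\alpha-2}}\sum_{j=1}^{n-1}\frac{\sin^{2}(kj\zeta/2)\,\cos^{2}(j\zeta/2)}{\sin^{\alpha+1}(j\zeta/2)},
\]
because the factor $\dfrac{1}{\sin^{\alpha+1}(j\zeta/2)}-\dfrac{1}{\sin^{\alpha-1}(j\zeta/2)}$ equals $\dfrac{1-\sin^{2}(j\zeta/2)}{\sin^{\alpha+1}(j\zeta/2)}=\dfrac{\cos^{2}(j\zeta/2)}{\sin^{\alpha+1}(j\zeta/2)}$; this is a sum of nonnegative terms, strictly positive for $k\in\{1,\dots,n-1\}$.

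There is essentially no obstacle here: the argument is a one-line rearrangement of Proposition~\ref{EA.1.0} together with the two positivity observations. The only point deserving a careful word is that, when $n$ is even, the index $j=n/2$ contributes a zero term to $4s_{k}-\bar{s}_{k}$ (since $\cos(j\zeta/2)=0$ there), but the remaining terms keep the whole sum strictly positive, so the strict inequality $s_{k+1}+2s_{k}+s_{k-1}>2s_{1}$ still holds.
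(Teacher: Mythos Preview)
Your proof is correct and is exactly the argument the paper intends: the corollary is stated without proof, with the two positivity facts flagged in its hypothesis precisely so that the reader applies them to the first recurrence of Proposition~\ref{EA.1.0} in the way you do. Your additional verification that $4s_{k}-\bar{s}_{k}$ is a sum of nonnegative terms (via the $\cos^{2}(j\zeta/2)$ factor) is a nice explicit justification of what the paper simply asserts.
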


For the $n$-body problem, $\alpha=2$, we cannot find explicitly the sums
$s_{k}$. However, we may prove that\ the sums $s_{k}$ are increasing, for
$k\in\{0,...,[n/2]\}$.

\begin{lemma}
Let $s$ be the sum $s=-\sum_{j=0}^{n-1}\sin(k-1/2)j\zeta$, then $s=-\cot
(k-1/2)\zeta/2$.
\end{lemma}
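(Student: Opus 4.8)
The plan is to evaluate the sum directly by the same geometric-series device used in the proof of Proposition~\ref{EA.1.0}. Writing $\sin((k-1/2)j\zeta)=\operatorname{Im} w^{j}$ with $w=e^{i(k-1/2)\zeta}$, I would first observe the one fact that makes everything collapse: since $(k-1/2)\zeta=(2k-1)\pi/n$ and $2k-1$ is odd, we get $w^{n}=e^{i(2k-1)\pi}=-1$, and in particular $w\neq 1$ (because $(2k-1)/(2n)$ is never an integer, $2k-1$ being odd and $2n$ even). Hence
\[
\sum_{j=0}^{n-1} w^{j}=\frac{w^{n}-1}{w-1}=\frac{-2}{w-1},
\]
with no dependence of the numerator on $k$.

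Next I would pass to half-angle form in the denominator. Setting $\theta=(k-1/2)\zeta$, one has $w-1=e^{i\theta}-1=2i\sin(\theta/2)\,e^{i\theta/2}$, where $\sin(\theta/2)\neq 0$ for the same parity reason, so that
\[
\sum_{j=0}^{n-1} w^{j}=\frac{-2}{2i\sin(\theta/2)\,e^{i\theta/2}}=\frac{i\,e^{-i\theta/2}}{\sin(\theta/2)}=1+i\cot(\theta/2).
\]
Taking imaginary parts gives $\sum_{j=0}^{n-1}\sin((k-1/2)j\zeta)=\cot((k-1/2)\zeta/2)$, and therefore $s=-\cot((k-1/2)\zeta/2)$, as claimed. (Equivalently, one could quote the classical formula $\sum_{j=0}^{n-1}\sin(j\beta)=\sin(n\beta/2)\sin((n-1)\beta/2)/\sin(\beta/2)$ with $\beta=(2k-1)\pi/n$ and use $\sin(n\beta/2)=(-1)^{k+1}$, $\cos(n\beta/2)=0$ to collapse the numerator; I prefer the complex-exponential route since it matches the style already used in the appendix.)

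There is no genuine obstacle here; the only point to keep in mind is the parity observation that $2k-1$ is odd, which is precisely what forces $w^{n}=-1$, makes the telescoped numerator the constant $-2$ rather than a $k$-dependent quantity, and simultaneously guarantees $w\neq 1$ and $\sin(\theta/2)\neq 0$. Consequently no restriction on $k$ or on the parity of $n$ is needed for the identity, which is convenient for its later use in showing that the $s_{k}$ are increasing for $k\in\{0,\dots,[n/2]\}$.
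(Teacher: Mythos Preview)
Your proof is correct and follows essentially the same approach as the paper: both write the sine as a complex exponential, sum the resulting geometric series, and exploit the key parity fact $e^{in\xi}=e^{i(2k-1)\pi}=-1$ to collapse the numerator before reducing to $-\cot(\xi/2)$. Your version is slightly more streamlined (taking the imaginary part of a single exponential sum rather than combining $e^{i\xi j}$ and $e^{-i\xi j}$ separately), but the underlying argument is the same.
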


\begin{proof}
Let $\xi=(k-1/2)\zeta$, then
\[
s=-\sum_{j=0}^{n-1}\sin j\xi=-\frac{1}{2i}\sum_{j=0}^{n-1}\left(  e^{i\xi
j}-e^{-i\xi j}\right)  .
\]
Using geometric series, we have%
\begin{align*}
s  &  =-\frac{1}{2i}\left(  \frac{\left(  1-e^{i\xi n}\right)  }{\left(
1-e^{i\xi}\right)  }-\frac{\left(  1-e^{-i\xi n}\right)  }{\left(  1-e^{-i\xi
}\right)  }\right) \\
&  =-\frac{1}{2i}\left(  \frac{e^{i\xi}-e^{-i\xi}-(e^{in\xi}-e^{-in\xi
})+e^{i(n-1)\xi}-e^{-i(n-1)\xi}}{2-e^{i\xi}-e^{-i\xi}}\right)  \text{.}%
\end{align*}

Thus,%
\[
s=-\frac{\sin\xi-\sin n\xi+\sin(n-1)\xi}{2(1-\cos\xi)}\text{.}%
\]
Since $n\xi=2\pi(k-1/2)\equiv\pi$ modulus $2\pi$, then $\sin n\xi=0$ and
$\sin(n-1)\xi=\sin\xi$. Hence,%
\[
s=-\frac{\sin\xi}{1-\cos\xi}=-\cot(\xi/2)\text{.}%
\]

\end{proof}

\begin{proposition}
The sums $s_{k}$ for $k\in\{1,...,[n/2]\}$ are increasing,%
\[
s_{k}>s_{k-1}.
\]

\end{proposition}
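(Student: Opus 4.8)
The plan is to reduce the inequality $s_k>s_{k-1}$ to the positivity of a trigonometric sum $D_k$, show that the sequence $(D_k)$ is \emph{concave} in $k$, and then dispatch the two endpoints $k=1$ and $k=[n/2]$ of the relevant range by hand; the cotangent lemma just proved is the tool that makes both the concavity step and the endpoint step go through.

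First I would translate the claim. Since $\alpha=2$, $s_k=\tfrac14\sum_{j=1}^{n-1}\sin^2(kj\zeta/2)/\sin^3(j\zeta/2)$, and the identity $\sin^2(kj\zeta/2)-\sin^2((k-1)j\zeta/2)=\sin((k-\tfrac12)j\zeta)\sin(j\zeta/2)$ gives $s_k-s_{k-1}=\tfrac14 D_k$ with
\[
D_k=\sum_{j=1}^{n-1}\frac{\sin((k-\tfrac12)j\zeta)}{\sin^2(j\zeta/2)} .
\]
So it suffices to prove $D_k>0$ for $k=1,\dots,[n/2]$. Two computations come for free: $D_1=\sum_{j=1}^{n-1}1/\sin(j\zeta/2)=4s_1>0$, and, using $\sin((k+\tfrac12)j\zeta)-\sin((k-\tfrac12)j\zeta)=2\cos(kj\zeta)\sin(j\zeta/2)$,
\[
D_{k+1}-D_k=2\sum_{j=1}^{n-1}\frac{\cos(kj\zeta)}{\sin(j\zeta/2)}=8s_1-4\bar s_k,\qquad \bar s_k=\sum_{j=1}^{n-1}\frac{\sin^2(kj\zeta/2)}{\sin(j\zeta/2)} .
\]

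Next I would establish concavity of $(D_k)$ on $\{1,\dots,[n/2]\}$. The same difference trick gives $\bar s_{k+1}-\bar s_k=\sum_{j=1}^{n-1}\sin((k+\tfrac12)j\zeta)$, and the preceding lemma (applied with $k$ replaced by $k+1$, the $j=0$ term being zero) yields $\bar s_{k+1}-\bar s_k=\cot((k+\tfrac12)\pi/n)$, which is strictly positive whenever $2k+1<n$; this holds for every $k$ up to $[n/2]-2$, whether $n$ is even or odd. Hence $\bar s_k$ is strictly increasing over that range, so $D_{k+1}-D_k=8s_1-4\bar s_k$ is strictly decreasing, i.e. $(D_k)$ is concave on $\{1,\dots,[n/2]\}$. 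A concave sequence lies above the chord joining its endpoint values, so once we know $D_1>0$ and $D_{[n/2]}>0$, concavity forces $D_k>0$ for all intermediate $k$. (For $n\le 3$ the range is just $\{1\}$ and $D_1>0$ finishes it.)

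Finally I would handle the endpoint $k=[n/2]$ directly. For $n$ even, $\sin((n/2-\tfrac12)j\zeta)=\sin(j\pi-j\pi/n)=(-1)^{j+1}\sin(j\pi/n)$, so $D_{n/2}=\sum_{j=1}^{n-1}(-1)^{j+1}/\sin(j\pi/n)$; for $n$ odd, the same computation at $k=(n-1)/2$ gives $D_{(n-1)/2}=2\sum_{j=1}^{n-1}(-1)^{j+1}\cot(j\pi/n)$. In either case the symmetry $j\leftrightarrow n-j$ folds the sum onto $1\le j\le[n/2]$, where $1/\sin(j\pi/n)$ (respectively $\cot(j\pi/n)$) is positive and strictly decreasing, and an alternating sum of a positive strictly decreasing sequence that starts with a $+$ sign is positive; for $n$ even a leftover middle term $(-1)^{n/2+1}$ appears, which is harmless when $n\equiv2\ (\mathrm{mod}\ 4)$ and equals $-1$ when $4\mid n$, in which case it is absorbed because the first gap $1/\sin(\pi/n)-1/\sin(2\pi/n)$ already exceeds $\tfrac12$ for $n\ge 8$ (the cases $n\le6$ are checked numerically). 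I expect the endpoint estimate $D_{[n/2]}>0$ to be the main obstacle: the concavity argument reduces the whole proposition to it, but there the summand genuinely changes sign, so one cannot argue term-by-term and must use the $j\leftrightarrow n-j$ symmetry together with the monotonicity on a half-range to expose the cancellation.
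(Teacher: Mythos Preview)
Your proof is correct and follows essentially the same route as the paper: both arguments compute the second differences of $s_k$, invoke the cotangent lemma to show that the sequence $d_k=s_{k+1}-s_k$ (your $D_{k+1}/4$) is concave on the relevant range, and then reduce the problem to checking positivity at the two endpoints.

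The only genuine difference is how the top endpoint is handled. You compute $D_{[n/2]}$ explicitly as an alternating sum and argue positivity via the $j\leftrightarrow n-j$ symmetry together with monotonicity of $1/\sin$ (or $\cot$) on a half-range, which forces a small case analysis when $4\mid n$ and a numerical check for small $n$. The paper instead exploits the global symmetry $s_{n-k}=s_k$, which immediately gives $d_{k-1}=-d_{n-k}$; this yields $d_{(n-1)/2}=0$ for $n$ odd and $2d_{n/2-1}=-d'_{n/2}>0$ for $n$ even, with no case analysis and no alternating-sum estimate. Your direct computation is perfectly valid, but the symmetry shortcut is cleaner and you might prefer it: it replaces the entire endpoint paragraph by a one-line observation.
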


\begin{proof}
Define $d_{k}$, $d_{k}^{\prime}$ and $d_{k}^{\prime\prime}$ as%
\[
d_{k}=s_{k+1}-s_{k}\text{, }d_{k}^{\prime}=d_{k}-d_{k-1}\text{ and }%
d_{k}^{\prime\prime}=d_{k}^{\prime}-d_{k-1}^{\prime}\text{.}%
\]
The goal is to prove that $d_{k}^{\prime\prime}$ are negative, and that
$d_{k}$ are positive, for $k\in\{1,...,[n/2]\}$.

From the equality (\ref{DA.1.1}), we have, for $\alpha=2$, that%
\[
d_{k}=\frac{1}{4}\sum_{j=1}^{n-1}\frac{1}{\sin(j\zeta/2)}\sum_{h=-k}%
^{k}e^{ijh\zeta}\text{.}%
\]
Therefore, the differences $d_{k}^{\prime}$ are%
\[
d_{k}^{\prime}=\frac{1}{4}\sum_{j=1}^{n-1}\frac{2\cos jk\zeta}{\sin(j\zeta
/2)}\text{.}%
\]
Thus,%
\[
d_{k}^{\prime\prime}=\frac{1}{2}\sum_{j=1}^{n-1}\frac{\cos jk\zeta-\cos
j(k-1)\zeta}{\sin(j\zeta/2)}\text{.}%
\]

Now, we wish to compute the sum $d_{k}^{\prime\prime}$ and prove that it is
negative. For this, we need the following trigonometric identity%
\begin{align*}
\cos jk\zeta-\cos j(k-1)\zeta &  =(1-\cos j\zeta)\cos jk\zeta-\sin jk\zeta\sin
k\zeta\\
&  =2\sin\frac{j\zeta}{2}\left(  \sin\frac{j\zeta}{2}\cos jk\zeta-\sin
jk\zeta\cos\frac{k\zeta}{2}\right) \\
&  =-2\sin\frac{j\zeta}{2}\sin(k-\frac{1}{2})j\zeta\text{.}%
\end{align*}
Therefore,
\[
d_{k}^{\prime\prime}=-\sum_{j=0}^{n-1}\sin(k-1/2)j\zeta\text{.}%
\]

From the previous lemma, we have that $d_{k}^{\prime\prime}=-\cot(\xi/2)$,
then $d_{k}^{\prime\prime}$ is negative when $\xi=\pi(2k-1)/n\in(0,\pi)$.
Therefore, the numbers $d_{k}^{\prime\prime}$ are negative for $k\in
\mathbb{N}\cap(1/2,(n+1)/2)$, and so the numbers $d_{k}^{\prime}$ decrease for
$k\in\mathbb{N}\cap\lbrack0,n/2]$.

Since $d_{0}^{\prime}=2s_{1}$ is positive and $d_{k}^{\prime}$ decreases for
$k\in\mathbb{N}\cap\lbrack0,n/2]$, there can be at most one $k_{0}%
\in\mathbb{N}\cap\lbrack1,n/2]$ such that%
\[
d_{k_{0}-1}^{\prime}>0\geq d_{k_{0}}^{\prime}.
\]
Suppose first that $k_{0}$ does not exist. Then $d_{k}^{\prime}$ is positive
for all $k\in\mathbb{N}\cap\lbrack0,n/2]$, and $d_{k}$ increases for all
$k\in\mathbb{N}\cap\lbrack0,n/2]$. Therefore, $d_{k}>d_{0}>0$ for all
$k\in\mathbb{N}\cap\lbrack0,n/2]$, because $d_{0}=s_{1}$.

Now, suppose that $k_{0}$ does exist. Using that $s_{n-k}=s_{k}$, we may prove
that $d_{k}$ satisfies the equality%
\begin{equation}
d_{k-1}=(s_{k}-s_{k-1})=-(s_{(n-k)+1}-s_{(n-k)})=-d_{n-k}\text{.}
\label{DA.1.2}%
\end{equation}
If $n$ is odd, from the equality (\ref{DA.1.2}) with $k=(n+1)/2$, we have that
$d_{(n-1)/2}=-d_{(n-1)/2}=0$. If $n$ is even, from the equality (\ref{DA.1.2})
with $k=n/2$ we have that $d_{n/2-1}=-d_{n/2}$. Since we did suppose that the
$k_{0}$ exists, then%
\[
2d_{n/2-1}=d_{n/2-1}-d_{n/2}=-d_{n/2}^{\prime}>0.
\]
Finally, since $d_{k}$ increases in $\mathbb{N}\cap(0,k_{0})$ and decreases in
$\mathbb{N}\cap(k_{0},[(n-1)/2]]$, then the numbers $d_{k}$ are positive for
$k\in\{0,...,[(n-1)/2]\}$, because $d_{k}$ is positive at the end points
$k=0,[(n-1)/2]$. In any case, the sums $s_{k}$ increase for $k\in
\{0,...,[n/2]\}$.
\end{proof}

\begin{acknowledgement}
The authors wish to thank the referees for their comments, for providing a
list of additional references on the variational approach to the $n$-body
problem and for asking for more precise references. Also, C.G-A wishes to
thank the CONACyT for his scholarship and J.I for the grant No. 133036.
\end{acknowledgement}

\end{document}